\documentclass[aap]{amsart}

\newtheorem{theorem}{Theorem}[section]
\newtheorem{proposition}[theorem]{Proposition}
\newtheorem{lemma}[theorem]{Lemma}
\newtheorem{corollary}[theorem]{Corollary}
\theoremstyle{definition}
\newtheorem{definition}[theorem]{Definition}
\newtheorem{example}[theorem]{Example}

\theoremstyle{remark}
\newtheorem{remark}[theorem]{Remark}

% Any additional packages needed should be included after jmlr2e.
% Note that jmlr2e.sty includes epsfig, amssymb, natbib and graphicx,
% and defines many common macros, such as 'proof' and 'example'.
%
% It also sets the bibliographystyle to plainnat; for more information on
% natbib citation styles, see the natbib documentation, a copy of which
% is archived at http://www.jmlr.org/format/natbib.pdf

% Available options for package jmlr2e are:
%
%   - abbrvbib : use abbrvnat for the bibliography style
%   - nohyperref : do not load the hyperref package
%   - preprint : remove JMLR specific information from the template,
%         useful for example for posting to preprint servers.
%
% Example of using the package with custom options:
%
% \usepackage[abbrvbib, preprint]{jmlr2e}

\usepackage[utf8]{inputenc} % allow utf-8 input
\usepackage[T1]{fontenc}    % use 8-bit T1 fonts
\usepackage{hyperref}       % hyperlinks
\usepackage{url}            % simple URL typesetting
\usepackage{booktabs}       % professional-quality tables
\usepackage{amsfonts}       % blackboard math symbols
\usepackage{nicefrac}       % compact symbols for 1/2, etc.
\usepackage{microtype}      % microtypography
\usepackage{xcolor}
\usepackage{enumitem} 
\usepackage{amsmath,amssymb}
\usepackage{tikz-cd}

%Mathcal, Mathscr and Mathfraks
\DeclareFontFamily{OT1}{pzc}{}
\DeclareFontShape{OT1}{pzc}{m}{it}{<-> s * [1.200] pzcmi7t}{}
\DeclareMathAlphabet{\mathzapfc}{OT1}{pzc}{m}{it}
\usepackage[mathscr]{eucal} 

\renewcommand{\L}{\mathcal{L}} 
\renewcommand{\H}{\mathcal{H}}

\newcommand{\gen}{\mathcal{L}}

\newcommand{\A}{\mathzapfc{A}}
\newcommand{\bi}{\mathzapfc{B}}

\newcommand{\V}{\mathzapfc{V}} 
\renewcommand{\SS}{\mathzapfc{S}}
\renewcommand{\S}{\mathzapfc{S}}
\newcommand{\MM}{\mathzapfc{M}}  

\newcommand{\M}{\mathscr{M}}

\newcommand{\F}{\mathscr{F}} 
\newcommand{\G}{\mathscr G}

\newcommand{\X}{\mathfrak X}
\newcommand{\g}{\mathfrak g}

\newcommand{\R}{\mathbb{R}}

\newcommand{\refsec}[1]{§\ref{#1}}     % Section cross reference
\newcommand{\Definition}[1]{\textbf{#1}} % Definition of term
                % Fundamental two form
  
              % Lagrangian fundamental two form
\newcommand{\hor}{\mathrm{hor}}
\newcommand{\ver}{\mathrm{ver}}

% Legendre Transformation
\newcommand{\metric}[2]{\left< #1, #2 \right>} 
\newcommand{\hvf}[1]{\setbox0=\hbox{$#1$}%
  \ifdim\wd0>1em\widehat{#1}\else\hat{#1}\fi} % Hamiltonian vector field
\def\rational#1#2{{\mathchoice{\textstyle{#1\over#2}}%
  {\scriptstyle{#1\over#2}}{\scriptscriptstyle{#1\over#2}}{#1/#2}}}
\def\half{\rational12}		% One half
		% One third
	% One quarter

% Align environments that use textstyle instead of displaystyle
\newenvironment{talign*}
 {\csname align*\endcsname}
 {\endalign}

\def\dd{\mathbf{d}}

\newcommand{\dt}{\delta t}                 % Integrator stepsize

\def\Or{\text{Or}}

\def\Dyn{\text{Dyn}}
\def\diff{\mathrm{d}}

\def\div{\text{div}}
\def\curl{\text{curl}}
\def\defn{\equiv}
\def\vol{\text{vol}}
\def\ad{\mathrm{ad}}

\def\tang{\partial}

\def\Tr{\mathrm{Tr}}
\def\Id{\mathrm{Id}}
\def\R{\mathbb R}

% Definitions of handy macros can go here

% Heading arguments are {volume}{year}{pages}{date submitted}{date published}{paper id}{author-full-names}

\begin{document}

\title{A Unifying and Canonical Description of Measure-Preserving Diffusions}
\author[Barp et al. ]{
Alessandro Barp\textsuperscript{1,2}, 
So Takao\textsuperscript{3}, Michael Betancourt\textsuperscript{4}, Alexis Arnaudon\textsuperscript{5}, Mark Girolami\textsuperscript{1,2}}
\address{\textsuperscript{1} Department of Engineering,
       University of Cambridge,
       Cambridge CB2 1PZ, United Kingdom}
\address{\textsuperscript{2} Alan Turing Institute, British Library, 96 Euston Rd, London NW1 2DB}

\address{\textsuperscript{3} UCL Centre for Artificial Intelligence, 
90 High Holborn, 
London WC1V 6LJ,     United Kingdom}
\address{\textsuperscript{5}  Blue Brain Project, École polytechnique fédérale de Lausanne (EPFL), Campus Biotech, 1202 Geneva, Switzerland}

\address{\textsuperscript{4} Symplectomorphic,  LLC,
        New York, USA}

%\author{\name Me
%\addr Zoom
%\AND
%\name You
%\addr Slack}
%\author{\name Alessandro Barp  \email a.barp16@imperial.ac.uk \\
%       \addr Department of Engineering\\
%       University of Cambridge\\
%       Cambridge CB2 1PZ, United Kingdom
%       \AND
%\name   So Takao \email st4312@ic.ac.uk \\
%       \addr 
%Dept. of Computer Science \\
%University College London \\
%London WC1E 6BT,
%        United Kingdom
%        \AND
%     \name Alexis Arnaudon \email alexis.arnaudon@gmail.com \\
%       \addr Blue Brain Project\\
%       \'Ecole Polytechnique F\'ed\'erale de Lausanne  \\
%       Campus Biotech, 1202 Geneva, Switzerland
%       \AND       
%       \name Michael Betancourt \email betan@symplectomorphic.com \\
%       \addr
%       Symplectomorphic,  LLC\\
%        New York, USA
%       \AND Mark Girolami
%       \email 
%       mag92@eng.cam.ac.uk\\
%       \addr Department of Engineering\\
%       University of Cambridge\\
%       Cambridge CB2 1PZ, United Kingdom}

\maketitle

\begin{abstract}%   <- trailing '%' for backward compatibility of .sty file
 A complete recipe of measure-preserving diffusions in Euclidean space was recently derived unifying several MCMC algorithms into a single framework. In this paper, we develop a geometric theory that improves and generalises this construction to any manifold.
 We thereby demonstrate that the completeness result is a direct consequence of the topology of the underlying manifold
and the geometry induced by the target measure $P$; there is no need to introduce other structures such as a Riemannian metric, local coordinates, or a reference measure.
Instead, our framework relies on the intrinsic geometry of $P$ and in particular its canonical derivative,  the deRham rotationnel, which allows us to parametrise the Fokker--Planck currents of measure-preserving diffusions using potentials. 
The geometric formalism can easily incorporate constraints and symmetries, and deliver new important insights, for example, a new complete recipe of Langevin-like diffusions that are suited to the construction of samplers. 
We also analyse the reversibility and dissipative properties of the diffusions, 
 the associated deterministic flow on the space of measures, and the geometry of Langevin processes.
Our article connects ideas from various literature and frames the theory of measure-preserving diffusions in its appropriate mathematical context.\\
\smallskip
\noindent
\textbf{Keywords.} MCMC, Hamiltonian Monte Carlo, Measure-preserving Diffusions, Geometric Statistics, Langevin Processes
\end{abstract}

\section{Introduction}

Markov processes play a prominent role in many areas of science.
%from physics to machine learning.
In particular,
%processes with continuous sample paths that may be described as a
continuous {\em diffusion processes} that are designed to preserve a given target measure $P$ underpin numerous important algorithms.
For instance in physics, many models rely on stochastic Hamiltonian dynamics in which mechanical systems are coupled to a fluctuating thermostat process preserving the {\em Boltzmann--Gibbs distribution}, see~\cite{rapaport2004art, tuckerman2010statistical}.
These processes have also inspired various deep learning and optimisation methods \cite{cong2017deep, erdogdu2018global, gao2018global,
li2016high, leimkuhler2020constraint, shahbaba2019deep,xu2018global}, as well as sampling algorithms to approximate expectations of observables by generating a Markov chain $(X_i)$ composed of $P$-preserving transition kernels,
 $$\mathbb E_P[f] \defn \int f \dd P \approx \frac{1}{\ell}\sum_i^{\ell} f(X_i)\, , $$ see \cite{bou2014time, cances2007theoretical, heber2020posterior, leimkuhler2016computation,stoltz2010free}.
These include Hamiltonian Monte Carlo (HMC) samplers
which originated in lattice QCD \cite{clark2011improving,duane1987,takaishi2006testing},
and have, since then, been widely applied from chemistry to statistics
\cite{ahn2012bayesian,akhmatskaya2009comparison,Barp:2018,
betancourt2017conceptual,chen2014stochastic,Girolami2011,izaguirre2004shadow,
lelievre2016partial,
neal2011mcmc,welling2011bayesian,ma2019there}.
Measure-preserving diffusions are also used  to construct Stein operators via the generator approach \cite{Barbour2005,barbour1988stein}, with applications to inference, goodness-of-fit tests, measuring sample qualities and approximating distributions
\cite{barp2019minimum,chen2019stein,gorham2016measuring,gorham2015measuring,
Liu2016testing}.

A crucial prerequisite among these applications is the ability for practitioners to construct tailored measure-preserving diffusions; hence the need for a general characterisation and {\em recipe} to construct them.
In Euclidean space, such a recipe  was recently derived in \cite{ma2015complete}, extending previous results including \cite{hwang1993accelerating,rey2015irreversible,shi2012relation,
yin2006existence}. 
More precisely, they proved that  {\em any} continuous Markov process preserving a target measure of the form $P \propto e^{-H} \dd x$ and satisfying  an integrability assumption can be expressed as
\begin{equation}\label{eq:intro-complete}
\diff Z_t = - \left(Q\nabla H + D\nabla H \right) \diff t + \nabla \cdot \big(Q+D \big) \diff t + \sqrt{2D}\,\diff W_t\, ,
\end{equation}
where $Q$ and $D$ are antisymmetric and positive semi-definite position-dependent matrices respectively.

However, this recipe suffers from several important drawbacks preventing its use in some modern applications, in particular:
\begin{description}
\item[Theoretical]
From a theoretical viewpoint, the target distribution in \cite{ma2015complete} is specified with respect to the Lebesgue measure in Cartesian coordinates, and is thus inappropriate when other coordinates or reference measures are used.
Moreover, it  cannot easily incorporate geometric properties such as symmetries or conserved quantities that require a geometric framework. 
In addition,
the derivation in \cite{ma2015complete} relies on an ad hoc construction of $Q$ based on Fourier transforms, which cannot be generalised to arbitrary manifolds and requires an additional integrability assumption.
Instead, we wish to obtain a deeper understanding of the reason why $P$-preserving diffusions must take the form \eqref{eq:intro-complete}.
Ideally, the construction should solely depend on the target measure $P$, as this is the only object generally given in applications, and should properly incorporate the assumption that $P$ is a smooth distribution.
\item[Practical]
From the viewpoint of applications, the restriction of the recipe 
to Euclidean spaces dramatically restricts its scope, as many modern applications require %constructing
$P$-preserving diffusions on manifolds.
%complex curved and constrained spaces.
These include thermodynamic integration, free energy calculation and molecular simulations
%in computational statistical physics 
\cite{leimkuhler2016efficient,
lelievre2013optimal,lelievre2019hybrid,stoltz2010free},
spectral density estimation
%or inference]
of partially observed models for Bayesian methods, directional statistics,
 and lattice QCD calculations on compact Lie groups \cite{barp2019hamiltonian,brubaker2012family,Simon:2013,
 Diaconis:2013,graham2019manifold,Holbrook:2017,holbrook2016bayesian,
liu2016stochastic,Mardia:1999,zappa2018monte},
%which target distributions on manifolds,
as well as applications  that are built using Stein operators on manifolds, diffusions on spaces with symmetries used in robotics (such as coupled rigid body motion), or for learning to encode symmetries in 
neural nets \cite{barp2018riemannian,chirikjian2010information,
ivancevic2010dynamics,
le2020diffusion,liu2018riemannian}.
\end{description}

\subsection*{Main contributions and Structure of the Paper}
In this article, we derive the   characterisation of $P$-preserving diffusions for any smooth positive measure $P$ on an arbitrary $n$-dimensional manifold $\M$, thus
 solving this problem in complete generality.
 Just as importantly, we derive  connections with other fields such as Poisson geometry, thermodynamics, and cohomology that explains how the construction fits in the bigger picture of mathematics.
 By leveraging the geometry generated by $P$, we provide a canonical construction for these diffusions, without introducing any other structures on $\M$, such as coordinates, a connection, a reference measure, a metric, or other mechanical structures like a Poisson bi-vector field.

Following the ideas from de Rham and Koszul,
we rely on the $P$-musical isomorphisms $P^{\flat}$ and $P^{\sharp}$ between multi-vector fields and twisted differential forms to construct the {\em $P$-rotationnel}
$\curl_P \defn P^{\sharp} \circ \dd \circ P^{\flat}$ (introduced in \refsec{sec:completeness-bracket-diffusion}), 
which transforms the  calculus of twisted differential forms associated to the exterior derivative $\dd$ into a measure-informed calculus of multi-vector fields 
associated to $\curl_P$.
%informed by the target distribution $P$.
The operators $P^{\flat} $ and $\curl_P$ induce the canonical geometry of the target $P$, which can then be used to show that {\em any} $P$-preserving diffusion on {\em any} manifold takes the form
\begin{align}\label{eq:intro-A-diff}
\begin{split}
\diff Z_t &= \underbrace{ \overbrace{\curl_P(\A) \,\diff t}^{\text{Fokker--Planck potential}}  + \overbrace{ P^{\sharp}(\gamma)\,\diff t}^{\text{topological obstruction}}  }_{\text{conservative } L^2(P)\text{-antisymmetric}} + \overbrace{\underbrace{\half  \div_{P}(Y_i)Y_i \,\diff t}_{\text{dissipative drift } } + \underbrace{ Y_i \circ \diff W^i_t}_{\text{Stratonovich noise}}}^{L^2(P)\text{-symmetric fluctuation-dissipation balance}}.
\end{split}
\end{align}
Here, the conservative term $\curl_P(\A) + P^{\sharp}(\gamma)$ characterises the set of all $P$-preserving vector fields.
% span the set of $P$-preserving deterministic flows.
Indeed, by analogy with the construction of the magnetic vector potential in physics,
we can locally build for any $P$-preserving vector field $X$, a `bi-vector potential' $\A$ such that $X=\curl_P(\A)$.
We will see that globally, this holds up to topological obstructions, which is represented by the term $ P^{\sharp}(\gamma)$, where $\gamma$ is a representative of the $(n-1)$-twisted de Rham cohomology group.
%This is valid globally, provided that we take into account  topological obstructions, which is achieved by the term $ P^{\sharp}(\gamma)$, where $\gamma$ is a representative of the $(n-1)$-twisted de Rham cohomology group.
In addition to this, the dissipative term $\half  \div_{P}(Y_i)Y_i $ balances the fluctuations introduced by the arbitrary noise process $Y_i \circ \diff W^i_t$, as in the fluctuation-dissipation theorem of statistical physics, ensuring that the stochastic and dissipative components of \eqref{eq:intro-A-diff}, when combined, are also $P$-preserving.

%More generally, 
Furthermore, when the target $P$ is expressed as $P = e^{-H} \mu_\M$, where $\mu_\M$ is an arbitrary smooth positive reference measure on $\M$, equation \eqref{eq:intro-A-diff} can be decomposed as follows:
\begin{equation}\label{eq:intro-A-diff-2}
\begin{split}
\diff Z_t = & \underbrace{ \overbrace{ \underbrace{ ( \quad  \curl_{\mu_\M}(\A)}_{\text{volume-preserving}}  +  \underbrace{ X^\A_H}_{\text{density-preserving}})\,\diff t}^{\text{local potential of Fokker--Planck current}}  \quad + \overbrace{ P^{\sharp}(\gamma)\,\diff t}^{\text{topological obstruction}}  }_{\text{conservative } L^2(P)\text{-antisymmetric drift}} \\
& + \overbrace{ ( \underbrace{-\half Y_i(H)Y_i }_{\text{density-dissipative drift } }+ \underbrace{ \underbrace{\half  \div_{\mu_\M}(Y_i)Y_i }_{\text{volume-dissipative drift } } )\,\diff t + \underbrace{ Y_i \circ \diff W^i_t}_{\text{Stratonovich noise}}}_{\text{volume-preserving } L^2(\mu_\M)\text{-symmetric diffusion}} }^{L^2(P)\text{-symmetric fluctuation-dissipation balance}},
\end{split}
\end{equation}
where $X^\A_H \defn \A(\dd H,\cdot)$ is a vector field generated by the log-density $H$, which we shall later refer to as an $\A$-{\em Hamiltonian vector field}.
In Euclidean space, we can show that \eqref{eq:intro-A-diff-2} boils down to  \eqref{eq:intro-complete}, thus showing that the integrability condition is not inherent to measure-preserving diffusions, but rather comes from the Fourier analysis used by the authors to derive the Euclidean recipe.

To derive this result, we begin in \refsec{sec:general-construction-diffusion} by building on ideas from Poisson mechanics
to construct a general $P$-preserving diffusion on $\M$ that naturally extends the Euclidean recipe \eqref{eq:intro-complete}.
 However, since the resulting diffusion is
 constructed from the Euclidean recipe,
 it does not take into account
 the non-trivial topological features  of the sample space, and is  thus
   incomplete, meaning that not every $P$-preserving diffusion can be expressed in that form. 
To remedy this,
we rely on the geometry induced by $P$  in \refsec{sec:completeness-bracket-diffusion} to derive the complete system \eqref{eq:intro-A-diff}.
In \refsec{sec:A-diffusion-compact}, we discuss the important case when $\M$ is compact, and show  how the topological obstructions can be directly expressed using harmonic forms of an arbitrary Riemannian metric.
The reversibility properties of \eqref{eq:intro-A-diff} will then be discussed in \refsec{sec:rever-dissip},
where we derive a general condition for the generator to be reversible up to some measure-preserving diffeomorphism, that generalises the momentum flip used in HMC to obtain a well-defined Metropolis--Hastings correction.
%of Hamiltonian mechanics that generates time-reversal symmetries and is used to obtain well-defined Metropolis--Hastings corrections in HMC.
In \refsec{sec:volume-free-diffusions}, we 
give an example of some new insights gained by the geometric formulation,  by deriving a new recipe of {\em volume-free $e^{-H}\mu_\M$-preserving diffusions},  a subclass of \eqref{eq:intro-A-diff-2} in which the volume terms $\curl_{\mu_\M}(\A)$ and $\div_{\mu_\M}(Y_i)Y_i$ vanish.
The volume-free property is shared by both the underdamped and standard  overdamped Langevin processes, and it not only simplifies the practical implementation of the diffusion, but confers it important geometric guarantees, making  this volume-free subclass of diffusions particularly suited as a starting point to construct Langevin-like samplers. For the remaining sections, in \refsec{sec:examples-A-diff}, we study non-degenerate processes on manifolds, and characterise $P$-preserving diffusions that are expressed in terms of Riemannian It\^o noise.
In \refsec{sec:flow on space of measures}, we discuss the deterministic flow on the space of smooth measures associated to $P$-preserving diffusions, 
and derive a simple formula for the rate of change of functionals over measures, such as the KL divergence and other information entropies. 
In \refsec{sec:manifold SOL}, we discuss the geometry and generalisation of the underdamped Langevin diffusions to manifolds, which we can use to construct an irreversible Langevin-based sampler,
that generalises both the second-order Langevin HMC on $\R^n$, as well as the Euler--Poincaré diffusions on Lie groups, of which the original HMC algorithm  is a special case.
Finally, in 
 \refsec{ergodicity} and \refsec{sec:history-P-diffusions} we briefly discuss the ergodicity and history of measure-preserving diffusions.

\subsection*{Notation} 
Throughout, $\M$ is an arbitrary $n$-dimensional smooth manifold, and in particular, no assumptions are made on its connectedness, orientability, or compactness.
 We denote by 
$\mathfrak X^k(\M)$ and $\Omega^k(\M)$ the spaces of $k$-vector fields and $k$-forms, i.e., antisymmetric, contravariant and covariant tensor fields of rank $k$ respectively. 
The space of vector fields is denoted $\mathfrak X(\M) \defn \mathfrak X^1(\M)$, and given $X \in \mathfrak X(\M)$, $\L_X$ denotes the associated Lie derivative.
The exterior derivative on differential forms is denoted using the bold font $\dd $, and the space of smooth $\R$-valued functions on $\M$ is denoted $C^{\infty}(\M)$.
We say that $P$ is a {\em smooth measure} if it is  Radon measure that is absolutely continuous with respect to the null sets of the manifolds (generated by the charts).
This means that over  any coordinate chart $(x^i)$,  we can write $P = p \,\dd x$, where $\dd x$ is the local Lebesgue measure, and $p: \M \to \R$ is a measurable function, which we will assume to be smooth.
  We say that $P$ is positive if it is globally supported (its support is $\M$) and its local densities  $p$ above are positive smooth functions - in which case we denote its divergence on vector fields  by $\div_P:\mathfrak X(\M) \to C^{\infty}(\M) $, defined as $\div_P(X) P \defn \L_XP$,
  and let  $L^2(P)$ be the space of square $P$-integrable $\R$-valued functions on $\M$.
Given a diffeomorphism $\mathcal R: \M \to \M$, we denote by $\mathcal R^*$ and $\mathcal R_*$ the induced pullback and pushforward on tensor fields respectively.

\subsection*{Stochastic Differential Equations On Manifolds}

The standard It\^o stochastic processes, such as the Wiener and Ornstein-–Uhlenbeck processes, are typically defined over the real line.  Multivariate stochastic processes over the real numbers are then built up by injecting these processes along each coordinate axis. 
 A general manifold, however, is not rigid enough to admit this kind of construction without being endowed with a connection, which allows a multivariate stochastic process to be defined locally and then developed into a stochastic process that evolves over the manifold \cite{schwartz1982geometrie,malliavin1978geometrie,emery2012stochastic,norris1992complete,lazaro2007stochastic}. 
  Conveniently, Stratonovich processes do not require this machinery and can be defined over general manifolds using vector fields that direct the local noise  \cite{ikeda2014stochastic,lazaro2007stochastic}. We shall denote the Stratonovich differential by $\circ \, \diff$, and the standard Brownian motion on $\R^N$ by $W_t$~\cite{bismut1982mecanique,pavliotis2014stochastic}.

\section{A General Measure-Preserving Diffusion on Manifolds}
\label{sec:general-construction-diffusion}

In this section, we present a step-by-step construction of a general class of measure-preserving diffusions that extends the Euclidean recipe \eqref{eq:intro-complete} to manifolds in a natural way. To do this, our strategy is to inspect each term in \eqref{eq:intro-complete} and replace them with its natural counterparts on manifolds, and finally showing that the resulting diffusion is indeed measure-preserving.

%Our initial strategy to construct $P$-preserving diffusions on manifolds is to begin with the Euclidean recipe \eqref{eq:intro-complete} and  replace the Lebesgue measure $\dd x$ therein with an arbitrary smooth and positive reference  measure
First, we replace the Lebesgue measure $\dd x$ on $\R^n$ with an arbitrary smooth and positive reference measure
$\mu_\M$ on $\M$, hereafter referred to as the  \emph{volume measure} (note that there is no analogue of $\dd x$ on general non-homogeneous manifolds).
Typically, $\mu_\M$ is a  measure that is canonically induced by additional structures on $\M$, for instance a Haar, uniform, Riemannian or symplectic measure.
The assumptions on the target $P$ guarantee that it can be written as a {\em Gibbs distribution} with respect to some ``Hamiltonian'' function $H : \M \rightarrow \R$ and ``inverse temperature'' $\beta > 0$, given as
\begin{align}\label{gibbs-def}
P \defn p_\infty \,\mu_\M \propto e^{-\beta H} \mu_\M.
\end{align}
The Hamiltonian then represents the unnormalised log-density $H \propto \log p_\infty$ of the target measure.

Let us first consider the scenario in which the antisymmetric component $Q$ in  \eqref{eq:intro-complete} vanishes.
In this case, we observe that the complete recipe consists of a general noise contribution 
$\sqrt{2D}\,\diff W_t$ associated to an arbitrary positive semi-definite matrix $D$, corrected by a drift component 
$(- D \nabla H + \nabla \cdot D) \,\diff t$. This latter drift term is obtained by the following well-known result \cite[Prop. 4.5]{pavliotis2014stochastic} (the notion of reversibility will be discussed carefully in \refsec{sec:rever-dissip}).

\begin{proposition}[\cite{nelson1958adjoint}] Given a vector field $X$ on $\M = \R^n$, the target measure $P \defn p_\infty \dd x$ is a solution to the stationary Fokker--Planck equation for the  It\^{o} SDE $\diff Z_t= X(Z_t) \,\diff t+ \sqrt{2 D} \,\diff W_t$ when
$$ \nabla \cdot \mathfrak J( p_\infty) =0, \quad \text{where} \quad  \mathfrak J( p_\infty) \defn  p_\infty X - \nabla \cdot (p_\infty D).$$
Moreover, $\diff Z_t$ is a $p_\infty \dd x$-preserving reversible process, that is, the Fokker--Planck current  $\mathfrak J( p_\infty)$ vanishes, if and only if
$$\diff Z_t= \frac{1}{p_\infty} \nabla \cdot (p_\infty D)  \,\diff t+ \sqrt{2 D} \,\diff W_t = (- D \nabla H + \nabla \cdot D) \,\diff t + \sqrt{2 D} \,\diff W_t. $$
\end{proposition}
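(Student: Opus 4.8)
The plan is to observe that both assertions are restatements of the stationary Fokker--Planck equation, once it is written in divergence (continuity) form. First I would write the forward Kolmogorov equation associated with the Itô SDE $\diff Z_t = X\,\diff t + \sqrt{2D}\,\diff W_t$: denoting by $p(t,\cdot)$ the evolving density and reading off from $\sqrt{2D}\,\diff W_t$ that the forward operator has second-order part $\partial_i\partial_j(D^{ij}\,\cdot\,)$, this reads $\partial_t p = -\partial_i(X^i p) + \partial_i\partial_j(D^{ij}p)$. The algebraic heart of the argument is that the entire right-hand side is a single divergence: grouping the first-order correction with the drift gives $\partial_t p = -\partial_i\big[X^i p - \partial_j(D^{ij}p)\big] = -\nabla\cdot\mathfrak J(p)$, where $\mathfrak J(p) = pX - \nabla\cdot(pD)$ is the probability current. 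Setting $\partial_t p = 0$ at $p = p_\infty$ is therefore exactly the condition $\nabla\cdot\mathfrak J(p_\infty) = 0$, proving the first claim.

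For the second claim I would invoke the stated identification of reversibility with the pointwise vanishing of the current, reducing the problem to solving $\mathfrak J(p_\infty) = 0$, i.e.\ $p_\infty X = \nabla\cdot(p_\infty D)$. This yields the first displayed equality $X = \tfrac{1}{p_\infty}\nabla\cdot(p_\infty D)$ immediately. To reach the second equality I would expand the matrix divergence by the product rule componentwise, $\tfrac{1}{p_\infty}\partial_j(p_\infty D^{ij}) = (\partial_j\log p_\infty)\,D^{ij} + \partial_j D^{ij}$, and then substitute $\log p_\infty = -H$ up to an additive constant (from $P\propto e^{-H}\dd x$), so that $\partial_j\log p_\infty = -\partial_j H$. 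Reassembling indices gives $X^i = -D^{ij}\partial_j H + \partial_j D^{ij}$, that is $X = -D\nabla H + \nabla\cdot D$, as required.

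The computation is routine, so the only real hazards are conventions rather than genuine obstacles. I would fix once and for all that $\nabla\cdot(pD)$ denotes the vector with $i$-th component $\partial_j(pD^{ij})$ (contracting on the second matrix index), since the sign and form of the correction term depend on this choice; and I would keep in mind that $D$ is only assumed positive semi-definite, hence possibly neither symmetric nor invertible, which is why the answer must be carried through $\nabla\cdot(p_\infty D)$ and cannot be simplified by cancelling $D$. The single point I would not reprove here is the equivalence between probabilistic reversibility and the vanishing of $\mathfrak J(p_\infty)$; since the statement explicitly defers a careful treatment of reversibility to \refsec{sec:rever-dissip}, I would take that identification as given.
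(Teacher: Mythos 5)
Your proposal is correct, and it is exactly the standard argument that the paper itself does not spell out: the Proposition is stated with citations to Nelson and to \cite[Prop.~4.5]{pavliotis2014stochastic}, and your derivation — writing the forward equation in divergence form $\partial_t p = -\nabla\cdot\mathfrak J(p)$, reading off stationarity as $\nabla\cdot\mathfrak J(p_\infty)=0$, and solving $\mathfrak J(p_\infty)=0$ for $X$ using positivity of $p_\infty$ and the paper's convention $(\nabla \cdot Q)_i = \partial_j Q_{ij}$ — is precisely the proof those references give. The only quibble is your aside that $D$ may be non-symmetric: the notation $\sqrt{2D}$ already presupposes $D$ symmetric positive semi-definite (and only the symmetric part would enter the second-order term anyway), though nothing in your computation depends on this.
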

Hence, given the random noise $\sqrt{2D}\,\diff W_t$, the deterministic drift $(- D \nabla H + \nabla \cdot D) \diff t$ provides the necessary and sufficient correction to ensure that the diffusion is reversible with respect to the target measure $p_\infty \dd x$.
To extend this idea to manifolds, we begin by replacing the It\^{o} differential  in the noise contribution $\sqrt{2D} \,\diff W_t$, with Stratonovich differentials, as these do not require a connection on $\M$  \cite[Sec. 4]{norris1992complete}. 
Specifically, we replace the It\^{o} noise  $\sqrt{2D} \,\diff W_t$ with the Stratonovich noise $Y_i \circ \diff W^i_t$, where $\{Y_1, \ldots, Y_N\}$
is a generic family of `noise' vector fields.
To proceed further, we shall rely on the following useful result (proved in \ref{sec:proof-adjoint}).

\begin{lemma} Given smooth vector fields $X, Y_1, \ldots, Y_N$ on  $\M$, consider the Stratonovich SDE
\begin{equation}\label{general-SDE}
 \diff Z_t = X(Z_t) \,\diff t +  Y_i(Z_t) \circ \diff W^i_t\,.
\end{equation}
Its generator is given by
$
\gen f = Xf+\half  Y_iY_if
$, see for example \cite{bismut1981martingales}. The Fokker--Planck operator, viewed as the formal adjoint of $\L$ in $L^2(\mu_\M)$ is then
\begin{equation}  \label{Fokker-general}
  \gen^*f = \div_{\mu_\M}\left(-fX + \half Y_i(f)Y_i+ \half  f\div_{\mu_\M}(Y_i)Y_i \right) \, .
\end{equation} 
\end{lemma}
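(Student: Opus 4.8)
The plan is to take the generator formula $\gen f = Xf + \half Y_iY_if$ as given, since it is the standard It\^o correction for a Stratonovich SDE and is cited above, so that the entire task reduces to computing the formal $L^2(\mu_\M)$-adjoint of one first-order term $X$ and of the sum of second-order terms $\half Y_iY_i$. The only analytic tool required is integration by parts against $\mu_\M$, so I would establish that identity once and then apply it mechanically.

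First I would record the divergence identity. For a vector field $V$ and a compactly supported $h \in C^{\infty}(\M)$, Cartan's formula applied to the top-degree object $h\mu_\M$ gives $\L_V(h\mu_\M) = \dd\iota_V(h\mu_\M) + \iota_V\dd(h\mu_\M) = \dd\iota_V(h\mu_\M)$, an exact density whose integral vanishes by Stokes' theorem; since $\M$ may be non-orientable I would run this with $\mu_\M$ regarded as a density (twisted top-form), for which Stokes still holds. Expanding the left-hand side through $\L_V(h\mu_\M) = \big(V(h) + h\,\div_{\mu_\M}(V)\big)\mu_\M$, which uses the defining relation $\L_V\mu_\M = \div_{\mu_\M}(V)\mu_\M$, yields
$$\int_\M V(h)\,\diff\mu_\M = -\int_\M h\,\div_{\mu_\M}(V)\,\diff\mu_\M.$$
Applying this with $h=fg$, together with $V(fg)=V(f)g+fV(g)$ and the Leibniz rule $\div_{\mu_\M}(gV)=V(g)+g\,\div_{\mu_\M}(V)$, shows that the formal adjoint of the derivation $f\mapsto V(f)$ is $g\mapsto -\div_{\mu_\M}(gV)$. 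This single identity does all the work.

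From here the result follows by bookkeeping. For the drift, the adjoint of $f\mapsto X(f)$ is $-\div_{\mu_\M}(fX)$. For each noise field $Y_i$ I would apply the adjoint of $Y_i$ twice: the inner application moves one $Y_i$ off $f$, the outer one produces a second divergence, and expanding $\div_{\mu_\M}(gY_i)=Y_i(g)+g\,\div_{\mu_\M}(Y_i)$ turns the adjoint of $Y_iY_i$ into $\div_{\mu_\M}\big(Y_i(f)Y_i + f\,\div_{\mu_\M}(Y_i)Y_i\big)$. Inserting the factor $\half$, summing over the index $i$, and adding the drift contribution reproduces \eqref{Fokker-general} exactly.

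I expect the computation itself to be routine; the only point requiring genuine care is the vanishing of boundary terms, since $\M$ is permitted to be non-compact and topologically non-trivial. This is precisely why the statement refers to the \emph{formal} adjoint: I would restrict every pairing to compactly supported smooth test functions, on which the Stokes step carries no boundary contribution, and rely throughout on the summation convention over the noise index $i$. The genuinely probabilistic ingredient, namely the passage from the Stratonovich SDE to the second-order generator, is imported from the cited literature rather than reproved here.
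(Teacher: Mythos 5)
Your proof is correct and follows essentially the same route as the paper's: both take the generator as given and compute the formal adjoint by integration by parts against $\mu_\M$ (via Cartan's formula, $\L_V(h\mu_\M)=\dd\, i_V(h\mu_\M)$, and Stokes on compactly supported test functions), arriving at $\gen^*f=\div_{\mu_\M}\bigl(-fX+\half\div_{\mu_\M}(fY_i)Y_i\bigr)$ and expanding the inner divergence. The only difference is organizational: you obtain the second-order adjoint cleanly as the composition $(Y_iY_i)^*=Y_i^*Y_i^*$ of the single-derivation adjoint $g\mapsto-\div_{\mu_\M}(gY_i)$, whereas the paper carries out the double integration by parts through explicit divergence identities; your handling of non-orientability (densities) and boundary terms matches the paper's caveats.
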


Now, for $P$ to be an invariant measure of the system, it suffices to show that $\gen^*e^{-\beta H} = 0$.
Mirroring the Euclidean case, we choose the drift $X$ to ensure that
 the {\em Fokker--Planck current}
 $$\mathfrak{J}(f) \defn -f X + \frac{1}{2} Y_i(f)Y_i+ \half  f \div_{\mu_\M}(Y_i)Y_i $$
  vanishes when $f$ is the target density $e^{-\beta H}$. This leads to the choice 
 $$ X \defn  - \half \beta Y_i(H)Y_i+ \half  \div_{\mu_\M}(Y_i)Y_i,$$
which naturally replaces the term $- D \nabla H + \nabla \cdot D$ in the Euclidean recipe.
Specifically, we have 
 $$\mathfrak J(e^{-\beta H})=0\quad \iff \quad \diff Z_t = \left( - \half \beta Y_i(H)Y_i+ \half  \div_{\mu_\M}(Y_i)Y_i\right) \diff t+ Y_i \circ \diff W^i_t, $$
 and in particular, the stationary Fokker--Planck equation
$$\L^*(e^{- \beta H})= \div_{\mu_\M}(\mathfrak J(e^{-\beta H}))  = 0$$
 is satisfied.
 We note that this
%scenario may be thought of as
is an instance of the {\em fluctuation-dissipation relation},
 where the volume-distortion
 caused by the noise $Y_i \circ \diff W^i_t$ is exactly balanced 
by  the volume-dissipative drift $X \defn -\frac{\beta}{2}Y_i(H)Y_i \diff t+ \half \div_{\mu_\M}(Y_i)Y_i$.

We now consider the general case in which $Q$ does not necessarily vanish.
We note that geometrically, the drift term $Q\nabla H$ in \eqref{eq:intro-complete} represents a vector field that depends linearly on  the gradient of the log-density $H$.
While manifolds are generally non-linear spaces, their tangent and cotangent bundle provide linear spaces over each point $x \in\M$, namely, the tangent space $T_x\M$ of vectors, to which $Q \nabla H|_x$ belongs, and the cotangent space $T^*_x\M$ of covectors, to which $\nabla H|_x$ belongs.

Geometrically, maps from $\Omega^1(\M)$ to $\mathfrak X(\M)$, such as $\nabla H \mapsto Q \nabla H$ that are linear at each point on $\M$ are called {\em vector bundle morphisms}, since they are compatible with the vector bundle structures. 
A vector bundle morphism can  be conveniently  represented using a 
{\bf bracket} $\bi \in \Gamma(T\M \otimes T\M)$ (i.e., a contravariant tensor field of rank two),
which assigns to the log-density $H$, a {\bf $\bi$-Hamiltonian vector field} $X_H^{\bi}$ via the relation $X_H^\bi \defn \bi(\dd H,\cdot) $. Hence, we deduce that the linear operator $Q$ may be interpreted as a bracket $\bi$ on manifolds and the vector field $X_H^{\bi}$ is a natural candidate for generalising the term $Q\nabla H$ in the Euclidean recipe.

%The bracket therefore provides a dynamical interpretation of the vector bundle morphism suited to our context.
%In local coordinates, this relation reads
%$$ X_H^{\bi}|_x = \bi^{ji}(x) \partial_j H \partial_i,$$
 %or $ (X_H^{\bi}|_x)^i = \bi^{ji}(x) \partial_j H$, so we can think of $\bi$ as \emph{raising the index} of the 1-form $\dd H$.

 \begin{example} $\bi$-Hamiltonian vector fields are ubiquitous in science. For example, they include Riemannian gradient flows, Hamiltonian vector fields associated to Poisson structures, and in  particular the  ones associated to a symplectic structure,
 4-gradient vector fields generated by a Lorentzian metric over spacetime, and thermodynamic flows,   which we shall come back to in \refsec{sec:rever-dissip}.
 \end{example}

 On the other hand, 
it is not a-priori clear how to interpret the final term ``$\nabla \cdot Q$''  in \eqref{eq:intro-complete} intrinsically,
%as this requires differentiating the rank two tensor field $\bi$, and turn it into a vector field.
as this would mean ``differentiate the rank two tensor field $\bi$ to turn it into a vector field'' -- such an operation does not exist  on general manifolds.
To proceed, we make the following ansatz for the measure-preserving diffusion on $\M$
\begin{equation}\label{Bracket-SDE}
 \diff Z_t = \big(X_{H }^\bi+Y\big) \diff t+\big( - \half \beta Y_i(H)Y_i+ \half  \div_{\mu_\M}(Y_i)Y_i\big) \diff t+ Y_i \circ \diff W^i_t,
\end{equation}
where $Y$ is currently an unspecified vector field that will later generalise the term ``$\nabla \cdot Q$'' in \eqref{eq:intro-complete}.
Meanwhile, we have the following result (proof in \refsec{sec:thrm-gibbs}):
\begin{theorem} \label{thrm-gibbs}
The Gibbs measure \eqref{gibbs-def} is preserved under the bracket diffusion \eqref{Bracket-SDE} 
if and only if the vector field $Y$ satisfies 
\begin{equation}
  \div_{\mu_\M}(X_{p_{\infty}}^\bi-\beta p_{\infty}Y)=0\, .
  \label{eq:X_def}
  \end{equation}
\end{theorem}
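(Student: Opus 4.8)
The plan is to reduce the invariance of $P$ to the vanishing of the divergence of the Fokker--Planck current evaluated at the target density, and then to simplify that current using the explicit form of the drift in the ansatz \eqref{Bracket-SDE}. Since $P = p_\infty \mu_\M$, invariance of $P$ under the diffusion is characterised by the stationary Fokker--Planck equation $\L^* p_\infty = 0$. By the Lemma, $\L^* p_\infty = \div_{\mu_\M}(\mathfrak{J}(p_\infty))$, where the Fokker--Planck current is
\[ \mathfrak{J}(f) = -fX + \half Y_i(f)Y_i + \half f\,\div_{\mu_\M}(Y_i)Y_i, \]
with $X = X_H^\bi + Y - \half\beta Y_i(H)Y_i + \half\div_{\mu_\M}(Y_i)Y_i$ the total drift of \eqref{Bracket-SDE}. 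The whole statement therefore amounts to computing $\mathfrak{J}(p_\infty)$ and showing that its $\mu_\M$-divergence vanishes precisely when \eqref{eq:X_def} holds.

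Next I would substitute this drift into $\mathfrak{J}(p_\infty)$ and exploit the chain rule $Y_i(p_\infty) = -\beta Y_i(H)\,p_\infty$, which follows from $p_\infty \propto e^{-\beta H}$. The two dissipative-drift terms $-\half\beta Y_i(H)Y_i$ and $+\half\div_{\mu_\M}(Y_i)Y_i$ appearing in $-p_\infty X$ then cancel exactly against the noise-induced contributions $\half Y_i(p_\infty)Y_i$ and $\half p_\infty\,\div_{\mu_\M}(Y_i)Y_i$; this is precisely the fluctuation--dissipation balance already used to fix the dissipative drift in the $Q=0$ case. What survives is the purely conservative part $\mathfrak{J}(p_\infty) = -p_\infty\big(X_H^\bi + Y\big)$.

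The key geometric step is to re-express this in terms of $X_{p_\infty}^\bi$. Because the bracket $\bi$ is $C^\infty(\M)$-linear in its covector slot and $\dd p_\infty = -\beta p_\infty\,\dd H$, one has
\[ X_{p_\infty}^\bi = \bi(\dd p_\infty,\cdot) = -\beta p_\infty\,\bi(\dd H,\cdot) = -\beta p_\infty X_H^\bi, \]
so that $-p_\infty X_H^\bi = \tfrac{1}{\beta}X_{p_\infty}^\bi$. Hence $\mathfrak{J}(p_\infty) = \tfrac{1}{\beta}\big(X_{p_\infty}^\bi - \beta p_\infty Y\big)$, and applying $\div_{\mu_\M}$ yields the exact identity $\L^* p_\infty = \tfrac{1}{\beta}\div_{\mu_\M}(X_{p_\infty}^\bi - \beta p_\infty Y)$, which vanishes if and only if \eqref{eq:X_def} holds, establishing both directions simultaneously.

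I do not anticipate a serious obstacle: once the Lemma is available the argument is a direct computation. The only points demanding care are the bookkeeping of the cancellation (verifying that the fluctuation--dissipation terms pair off correctly) and the sign and scalar factor in the identity $X_{p_\infty}^\bi = -\beta p_\infty X_H^\bi$, which rests on the tensoriality of $\bi$ in its covector argument. Conceptually, the content is that the noise together with its dissipative correction contributes nothing to the current at the target density, reducing invariance to a single condition on the conservative drift $X_H^\bi + Y$.
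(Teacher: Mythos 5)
Your proposal is correct and follows essentially the same route as the paper's proof: both substitute the ansatz drift into the Fokker--Planck operator, cancel the fluctuation and dissipation terms via $Y_i(p_\infty)=-\beta p_\infty Y_i(H)$ together with $\div_{\mu_\M}(p_\infty Y_i)=Y_i(p_\infty)+p_\infty\div_{\mu_\M}(Y_i)$, and then use $X^\bi_{p_\infty}=-\beta p_\infty X^\bi_H$ to reduce $\L^*p_\infty$ to $\tfrac1\beta\div_{\mu_\M}(X^\bi_{p_\infty}-\beta p_\infty Y)$. Your organisation through the current $\mathfrak J(p_\infty)$ is merely a repackaging of the paper's in-divergence cancellation, and your version of the chain-rule identity even carries the $\beta$ factor correctly (the paper's proof states it with the $\beta$ omitted, though its subsequent computation uses the correct form).
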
 

We shall now use this result to %understand how to 
make sense of the vector field $Y$.
Observe that by analogy with the  Euclidean case,  we want to obtain a complete recipe that is valid for any target distribution $P$, 
and thus for any choice of density $p_\infty$ with respect to a convenient volume measure $\mu_\M$. 
Since $Y$ should not depend on the choice of the target density, we expect the identity
\eqref{eq:X_def} to hold for {\em any} positive density $p_{\infty}$.
In particular, setting $p_\infty \equiv$ constant, we have $X_{p_\infty}^\bi=0$, implying that  $Y$ 
must preserve the reference measure, i.e., $\div_{\mu_\M}(Y)=0$.
If we assume this latter condition, then \eqref{eq:X_def} can be written as
\begin{equation}\label{eq:Weinstein-modular}
\div_{\mu_\M}(X_{p_{\infty}}^\bi)=\beta Y(p_{\infty}), 
\end{equation}
which holds  for any density $p_{\infty}$.
When $\bi$ is a {\em Poisson structure} (i.e., it is antisymmetric and satisfies the Jacobi identity), 
equation \eqref{eq:Weinstein-modular} is precisely the definition of \emph{modular vector fields} $Y$
in Poisson mechanics
(see \cite{dufour1991rotationnnels,dufour2006poisson,
weinstein1997modular}). 
Specifically, we can associate to a bracket $\bi$ and a volume measure $\mu_\M$, a {\bf modular vector field} $X^{\mu_\M}_\bi$, which is defined as the differential operator
\begin{equation}\label{eq:def-modular-field}
 X^{\mu_\M}_\bi:f \mapsto \div_{\mu_\M}(X_f^{\bi}),
\end{equation}
acting on  smooth functions.
While we have no reason to require that $\bi$ is Poisson, 
it is necessary for it to be antisymmetric in order for $X^{\mu_\M}_\bi$  to be a vector field, otherwise it will not satisfy the derivation property $X^{\mu_\M}_\bi(fg) = fX^{\mu_\M}_\bi(g) + gX^{\mu_\M}_\bi(f)$\footnote{To see this, check that $X^{\mu_\M}_\bi(fg) = \div_{\mu_\M}(fX_g^\bi + gX_f^\bi) = \bi(\dd g, \dd f) + f X^{\mu_\M}_\bi(g) + \bi(\dd f, \dd g) + g X^{\mu_\M}_\bi(f)$}.
%(rather than simply being a second-order differential operator).
Hereafter, we shall denote an arbitrary {\bf antisymmetric bracket} by $\A \in \mathfrak X^2(\M)$,
and note that the associated $\A$-Hamiltonian vector field $X_H^\A$ preserves the target density, i.e. $\L_{X_H^\A} p_\infty =0$, so the integral curves of $X_H^\A$ remain tangent to the surfaces of constant target density.
 Thus, replacing $Y$ with 
$X^{\mu_\M}_\A$ in \eqref{Bracket-SDE},
we obtain the following class of diffusions 
\begin{equation}\label{eq:modular-diffusion}
\diff Z_t = \overbrace{\underbrace{ X_{H }^\A  \,\diff t}_{e^{-H}\text{-preserving}}+\underbrace{ \beta^{-1}X^{\mu_\M}_\A  \,\diff t}_{\mu_\M\text{-preserving}} }^{e^{- \beta H}\mu_\M\text{-preserving}} - \overbrace{  \half \beta Y_i(H)Y_i  \,\diff t+ \underbrace{ \half  \div_{\mu_\M}(Y_i)Y_i \,\diff t+ Y_i \circ \diff W^i_t}_{\mu_\M\text{-preserving}}}^{e^{-\beta H} \mu_\M\text{-preserving}},
\end{equation}
which, by construction, preserves the target measure $P \propto e^{-\beta H} \mu_\M$.
% for any antisymmetric bracket $\A$, noise fields $(Y_i)$, and reference measure $\mu_\M$.
Moreover, in the Euclidean case, \eqref{eq:modular-diffusion} recovers the Euclidean diffusion
$$
\diff Z_t = -Q\nabla H \diff t + \nabla \cdot Q  \diff t  - D\nabla H  \diff t +   \nabla \cdot D \diff t  + \sqrt{2D}\,\diff W_t\, ,
$$
 as shown in the following result (see proof in \refsec{sec:complete-recipe-recovery}).
\begin{corollary} Let $\M=\R^n$, $\mu_\M = \dd x$, $\sigma_{ij}\defn Y^i_j$, $D \defn\half \sigma \sigma^T$ and $Q_{ij} \defn \A^{ij}$. 
Then \eqref{eq:modular-diffusion} reduces to the It\^o diffusion \eqref{eq:intro-complete} derived in %the complete recipe for Stochastic Gradient MCMC
\cite{ma2015complete}.
\end{corollary}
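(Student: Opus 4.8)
The plan is to work throughout in the global Cartesian coordinates $(x^1,\dots,x^n)$ on $\M=\R^n$, in which $\mu_\M=\dd x$ and $\div_{\dd x}$ is the ordinary divergence, and to set $\beta=1$ (the recipe \eqref{eq:intro-complete} carries no inverse temperature, so the identification simply absorbs $\beta$ into $H$). Writing each noise field in components as $Y_i=Y_i^k\,\tang_k$, the hypotheses read $\sigma_{ki}=Y_i^k$ and $D_{kl}=\half\sum_i Y_i^k Y_i^l$, so that the Stratonovich noise $Y_i\circ\diff W^i_t$ has diffusion matrix $\sigma$ with $\sigma\sigma^T=2D$; this produces the same generator, hence the same law, as $\sqrt{2D}\,\diff W_t$. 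I would then convert \eqref{eq:modular-diffusion} to Itô form and match it to \eqref{eq:intro-complete} term by term.

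First I would treat the two conservative terms. With $Q_{ij}\defn\A^{ij}$ and the convention $X^\A_H=\A(\dd H,\cdot)$, contracting $\dd H$ into the first slot of the antisymmetric bivector gives $(X^\A_H)^k=\A^{ik}\tang_i H=-Q_{ki}\tang_i H=-(Q\nabla H)^k$, so $X^\A_H=-Q\nabla H$. For the modular field, expanding $X^{\mu_\M}_\A(f)=\div_{\dd x}\big(\A(\dd f,\cdot)\big)=\tang_k(\A^{ik}\tang_i f)$ by the product rule produces a term $\A^{ik}\tang_k\tang_i f$ that vanishes because $\A$ is antisymmetric while $\tang_k\tang_i$ is symmetric; what remains identifies the vector field $(X^{\mu_\M}_\A)^i=\tang_k\A^{ik}=\tang_k Q_{ik}=(\nabla\cdot Q)^i$, in the divergence convention of \cite{ma2015complete}. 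Thus $X^\A_H+X^{\mu_\M}_\A=-Q\nabla H+\nabla\cdot Q$, matching the antisymmetric part of the Euclidean drift.

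Next I would handle the fluctuation–dissipation block. Passing from $b\,\diff t+Y_i\circ\diff W^i_t$ to Itô form adds to the drift the correction $c^k=\half\sum_i Y_i^l\,\tang_l Y_i^k$. The density-dissipative term $-\half\sum_i Y_i(H)Y_i$ has $k$-component $-\half\sum_i Y_i^k Y_i^l\tang_l H=-(D\nabla H)^k$, directly from $D_{kl}=\half\sum_i Y_i^k Y_i^l$. It then remains to show that the volume-dissipative drift together with the Itô correction reassemble into $\nabla\cdot D$: expanding $(\nabla\cdot D)^k=\tang_l D_{kl}=\half\sum_i\big[(\tang_l Y_i^k)Y_i^l+Y_i^k\,\tang_l Y_i^l\big]$ splits exactly as $c^k+\big(\half\sum_i\div_{\dd x}(Y_i)\,Y_i\big)^k$. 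Assembling all four pieces recovers the drift $-Q\nabla H-D\nabla H+\nabla\cdot(Q+D)$ of \eqref{eq:intro-complete}, while the noise is $\sqrt{2D}\,\diff W_t$ as already noted.

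The routine content is bookkeeping, but I expect the one genuinely delicate step to be the Itô–Stratonovich conversion and the observation that the single Euclidean term $\nabla\cdot D$ is \emph{not} a single term of \eqref{eq:modular-diffusion}: it is split between the explicit volume-dissipative drift $\half\div_{\dd x}(Y_i)Y_i$ and the hidden correction $c$ generated by the change of stochastic calculus. Keeping this split straight, together with carefully tracking the slot and sign conventions in $\A(\dd H,\cdot)$ and in the divergence of the bivector so that antisymmetry collapses the unwanted symmetric second-derivative term, is where the argument must be done with care; everything else is a direct coordinate computation.
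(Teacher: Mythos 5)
Your proof is correct and follows essentially the same route as the paper's own argument in \refsec{sec:complete-recipe-recovery}: a direct Cartesian coordinate computation matching terms, with the sign bookkeeping $X_H^\A=-Q\nabla H$, the antisymmetry of $\A$ collapsing the second-derivative term so that $X^{\mu_\M}_\A=\nabla\cdot Q$, and the key observation that $\nabla\cdot D$ arises only after combining the volume-dissipative drift $\half\div_{\dd x}(Y_i)Y_i$ with the Stratonovich-to-It\^o correction $\half Y_i^l\partial_l Y_i^k\partial_k$. The only cosmetic differences are that you absorb $\beta$ into $H$ up front and spell out the modular-field product-rule step that the paper leaves implicit in its displayed computation.
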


\begin{remark}
We warn the readers about the similarity in the notations used for $\A$-Hamiltonian vector fields $X_H^\A$ and modular vector fields $X_\A^{\mu_\M}$. 
In the former, the bi-vector field $\A$ appears in the superscript, with a scalar function appearing in the subscript, while in the latter, the bi-vector field $\A$ appears in the subscript, with a positive measure appearing in the superscript.
\end{remark}

So far, we have explained how to give intrinsic meaning to each term of \eqref{eq:intro-complete} and
 constructed a class of diffusions that preserve the measure $P$. However, it is still unclear at this point whether this class of diffusions  \eqref{eq:modular-diffusion}, hereafter referred to as {\bf $\A$-diffusions}, is {\em complete}, that is, whether {\em any} $P$-preserving diffusion on $\M$ has the form \eqref{eq:modular-diffusion}.
In the following, we shall answer this question by taking into account the geometry of {\em volume manifolds}, given by the tuple $(\M,P)$, where $P$ is a smooth positive measure on $\M$. 

\section{Local and Global Completeness of $\A$-Diffusions} \label{sec:completeness-bracket-diffusion}

%To tailor the geometry and topology of $\M$ to our smooth positive target measure $P$, we  rely on the $P$-{\em musical isomorphisms}, which we define as follows.
To work with the geometry and topology of $\M$ induced by our smooth positive target measure $P$, 
we rely on the $P$-{\em musical isomorphisms}, defined as follows.
Let  $\Omega^{k}_{\Or}(\M)$ be the space of $k$-twisted differential forms introduced by de Rham \cite{de1973varietes} (see for example \cite[Sec. 2.8]{frankel2011geometry} for an introduction to these important objects), while the unfamiliar reader may treat them as standard differential $k$-forms with loss of generality. 
The $P$-{\em flattening operator}
 $P^\flat : \mathfrak X^k(\M) \rightarrow \Omega^{n-k}_{\Or}(\M)$
for integers $0 \leq k \leq n$ is defined by  $P^\flat(X) \defn i_X P$, where $i_X$ denotes the interior product of (twisted) differential forms with a $k$-vector field $X$ \cite{koszul1985crochet,Marle:1997}.
More precisely, $P^{\flat}(X)$ is a twisted $(n-k)$-differential form such that  for any $(n-k)$-vector field $W$, we have 
 \begin{equation}\label{eq:P-flat}
 P^{\flat}(X)(W) =\metric{P^{\flat}(X)}{W}_* \defn  \metric{i_XP}{W}_*= \metric{P}{X \wedge W}_*,
\end{equation}   where $ \metric{\cdot}{\cdot}_*: \Omega^k(\M) \times \mathfrak X^k(\M) \to C^\infty(\M)$  denotes the duality pairing.
 Note that this map is well-defined regardless of whether $P$ is positive or not. When $P$  is globally supported, it becomes a $C^{\infty}(\M)$-linear isomorphism 
 $$P^{\flat}:\mathfrak X^k(\M) \xrightarrow{\sim} \Omega^{n-k}_\Or(\M),$$ with the inverse denoted by $P^\sharp$. We refer to the maps  $P^\flat$ and $P^\sharp$ as the {\em $P$-musical isomorphisms}, which should not be confused with the Riemannian musical isomorphisms.

\begin{example}  As an interesting example that shows the  relevance of the $P$-musical isomorphisms in the context of statistics, observe that if $Q$ is a smooth measure on $\M$,  then $P^\sharp$ yields the 
Radon--Nikodym derivative
$$ P^{\sharp}(Q) = \frac{\dd Q}{\dd P},$$
which can be observed directly from $Q = fP = P^\flat(f)$, where $f = \frac{\dd Q}{\dd P} \in C^\infty(\M)$.
\end{example}

Crucially for our purpose, the $P$-musical isomorphisms $P^\flat$ and $P^\sharp$ allow us to construct a canonical derivative associated to $P$, which will be central to our discussion on the completeness of $\A$-diffusions.
\begin{definition}[\cite{koszul1985crochet}]
The $P$-\Definition{rotationnel} of a $k$-vector field for some integer $1 \leq k \leq n$ is defined as 
$$ \curl_P \defn P^{\sharp} \circ \dd \circ P^{\flat}  : \mathfrak X^k(\M) \to \mathfrak X^{k-1}(\M), $$
where $\dd$ also denotes the extension of the exterior derivative to twisted forms \cite{bott2013differential}. When $k=0$, we set $\curl_P(f) \defn 0$ for any $f \in C^\infty(\M)$.
\end{definition}
An important  property of the operator $\curl_P$ is that it does not depend on the normalisation of $P$ (i.e., if $a \in \R-\{0\}$, then $ \curl_{aP} = \curl_P$), which is often unknown in applications. This is the case for example in Bayesian statistics (the posterior distribution), and in molecular dynamics (the canonical distribution).

In the context of measure-preserving diffusions, we are particularly interested in the action of $\curl_P$ on vector fields and bi-vector fields, which is given as follows.
In the case $k=1$ (i.e., vector fields), the $P$-rotationnel recovers the divergence operator
$$\curl_P|_{\mathfrak X(\M)} = \div_P,$$ 
%which follows from
%$ \div_P(X) P =\L_X P = \dd i_X P = P^{\flat} \curl_P(X) = \curl_P(X) P$, 
and in the case $k=2$ (i.e., bi-vector fields),
 we have the following result showing that  $\curl_P$ maps bi-vector fields to their modular vector field  \eqref{eq:def-modular-field} (proved in  \refsec{sec:curl-is-modular}).
\begin{theorem}\label{lemma:div-curl}
Given a volume manifold $(\M,P)$, we have the identity 
$$\curl_{P}(\A) = X^{P}_\A.$$
\end{theorem}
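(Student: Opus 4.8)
The plan is to prove the identity by testing both sides against functions. Since $\curl_P(\A)$ and the modular vector field $X^P_\A$ are both elements of $\mathfrak X(\M)$, hence derivations on $C^\infty(\M)$, it suffices to show that they agree when applied to an arbitrary $f \in C^\infty(\M)$; equivalently, after multiplying by the nowhere-vanishing measure $P$, to show that $\curl_P(\A)(f)\,P = \div_P(X_f^\A)\,P$, where $X_f^\A \defn \A(\dd f, \cdot)$.

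First I would rewrite the right-hand side using Cartan's formula $\L_X = \dd\, i_X + i_X \dd$. Since $P$ is a (twisted) top-degree form we have $\dd P = 0$, so $\L_{X_f^\A} P = \dd\, i_{X_f^\A} P$, and by the defining relation $\div_P(Z)\,P = \L_Z P$ this gives $\div_P(X_f^\A)\,P = \dd\, i_{X_f^\A} P$.

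Next I would rewrite the left-hand side. Writing $V \defn \curl_P(\A) = P^\sharp(\dd P^\flat(\A))$, the definition of $P^\sharp$ gives $i_V P = \dd P^\flat(\A) = \dd(i_\A P)$. Wedging with $\dd f$ and using the elementary identity $\dd f \wedge i_V P = V(f)\,P$ (which follows from $i_V(\dd f \wedge P) = 0$ together with the derivation property of $i_V$), I obtain $\curl_P(\A)(f)\,P = \dd f \wedge \dd(i_\A P)$. Comparing the two expressions, the theorem reduces to the single identity $\dd\, i_{X_f^\A}P = \dd f \wedge \dd(i_\A P)$.

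Finally, I would establish this from the purely algebraic (pointwise, tensorial) relation $i_{X_f^\A}P = -\,\dd f \wedge i_\A P$; granting it, applying $\dd$ and using $\dd^2 = 0$ yields $\dd\, i_{X_f^\A}P = -\dd(\dd f \wedge i_\A P) = \dd f \wedge \dd(i_\A P)$, as required. To prove the algebraic relation it is enough, by $C^\infty(\M)$-linearity in $\A$ and locality, to treat a decomposable bracket $\A = X \wedge Y$, for which $X_f^\A = (Xf)Y - (Yf)X$, and then to expand both sides using the derivation property of the interior product and the identity $\dd f \wedge i_W P = (Wf)\,P$ for vector fields $W$. The main obstacle here is sign bookkeeping: the convention for $i_\A$ on bi-vectors is fixed by the pairing $P^\flat(\A)(W) = \metric{P}{\A \wedge W}_*$, which forces $i_{X \wedge Y} = i_Y \circ i_X$, and it is precisely this ordering that produces the minus sign above and hence the equality $\curl_P(\A) = +X^P_\A$ rather than its negative. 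I expect the twisted-orientation character of the forms to play no role, since the identity is local and tensorial and the orientation factor appears identically on both sides.
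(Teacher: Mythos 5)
Your proposal is correct and follows essentially the same route as the paper's own proof in \refsec{sec:curl-is-modular}: both test the two vector fields against an arbitrary $f \in C^\infty(\M)$, use the identity $\dd f \wedge i_V P = V(f)\,P$ (obtained from $i_V(\dd f \wedge P)=0$) to convert the claim into an equality of twisted $n$-forms, and pivot on the algebraic relation $i_{X_f^\A}P = -\,\dd f \wedge i_\A P$, which is precisely the identity $P^\sharp\bigl(\dd f \wedge P^\flat(\A)\bigr) = i_{\dd f}\A$ that the paper cites from Dufour--Zung. The only difference is that you prove this key relation yourself, correctly reducing by tensoriality to decomposable brackets $\A = X \wedge Y$ with the convention $i_{X\wedge Y} = i_Y \circ i_X$ forced by \eqref{eq:P-flat} (your sign bookkeeping checks out), which makes the argument self-contained where the paper invokes a citation.
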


It follows that we can replace the terms $X^{\mu_\M}_\A $ and $\div_{\mu_\M}(Y_i)Y_i $ in \eqref{eq:modular-diffusion} by $ \curl_{\mu_\M}(\A) $ and $\curl_{\mu_\M}(Y_i)Y_i $ respectively,
%It follows that two terms in \eqref{eq:modular-diffusion} can be replaced by the rotationnels, that is,
%$X^{\mu_\M}_\A $ by $ -\curl_{\mu_\M}(\A) $ and $\div_{\mu_\M}(Y_i)Y_i $ by $\curl_{\mu_\M}(Y_i)Y_i $,
suggesting that the  rotationnel plays a central role in the construction of measure-preserving diffusions.
To realise the full potential of $\curl_P$ in our context, we need the following lemma (proved in \refsec{sec:fokker-density-adjoint}). 
\begin{lemma}  
The Fokker--Planck operator of the SDE \eqref{general-SDE}, viewed as the
 formal adjoint of the generator $\gen$ with respect to the dual pairing $\metric{f}{P}_* \mapsto \int f \dd P$
between smooth compactly supported functions $f$ and smooth measures $P$, is given by
\begin{equation}\label{eq:Fokker-adjoint-measures}
\L^*P = -\L_XP + \half \L_{Y_i} \L_{Y_i}P.
\end{equation} 
If $P$ is also positive, then the density of the smooth measure $\L^*P$ with respect to $P$ is given by
$\div_P \left( \half \div_P(Y_i)Y_i -X \right)$, and we call 
$$\mathfrak J(P) \defn \half \div_P(Y_i)Y_i -X$$ the {\bf Fokker-Planck current} of $P$.
Thus 
$$ \L^*P = \curl_P(\mathfrak J(P))P = \div_P(\mathfrak J(P))P.$$
%Finally, we note that the formal adjoint of $\L$ in the space $L^2(\M,\mu_\M)$ is given by
%\begin{equation}  \label{Fokker-general}
%  \gen^Tf = \div_{\mu_\M}\left(-fX + \half Y_i(f)Y_i+ \half  f\div_{\mu_\M}(Y_i)Y_i \right) \, .
%\end{equation} 
\end{lemma}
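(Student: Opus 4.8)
The plan is to unfold the definition of the formal adjoint against the pairing $\metric{f}{P}_* = \int f\,\dd P$ and integrate by parts twice, then translate the resulting Lie derivatives of measures into $\div_P$-densities. The single tool I need is the integration-by-parts identity: for any vector field $Z$ and any compactly supported $f \in C^\infty(\M)$,
\[
\int_\M (Zf)\,\dd P = -\int_\M f\,\dd(\L_Z P).
\]
This follows from Cartan's formula $\L_Z(fP) = \dd\, i_Z(fP)$ (using $\dd P = 0$ as $P$ is top degree) together with Stokes' theorem, which holds for twisted top-forms on an arbitrary manifold; this is precisely why passing to twisted forms lets us dispense with an orientability assumption, and compact support kills the boundary term. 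Expanding $\L_Z(fP) = (Zf)P + f\,\L_Z P$ and integrating then gives the displayed identity.

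First I would treat the drift term, obtaining $\int (Xf)\,\dd P = -\int f\,\dd(\L_X P)$. For the second-order term $\half\int (Y_iY_if)\,\dd P$ I apply the identity twice: once with $Z = Y_i$ acting on the function $Y_i f$, producing $-\int (Y_if)\,\dd(\L_{Y_i}P)$, and once more with $\L_{Y_i}P$ now playing the role of the smooth (signed) top-form, producing $+\half\int f\,\dd(\L_{Y_i}\L_{Y_i}P)$; the two sign flips combine to a net $+\half$. Collecting terms yields $\metric{\L f}{P}_* = \metric{f}{-\L_X P + \half\L_{Y_i}\L_{Y_i}P}_*$, which by definition of the adjoint is exactly \eqref{eq:Fokker-adjoint-measures}.

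For the density claim I assume $P$ positive and convert via $\L_Z P = \div_P(Z)P$. The drift contributes $\L_X P = \div_P(X)P$, while for the double Lie derivative I write $\L_{Y_i}\L_{Y_i}P = \L_{Y_i}(\div_P(Y_i)P)$ and use the Leibniz rule $\L_Z(gP) = (Zg)P + g\,\div_P(Z)P$ to get $[Y_i(\div_P(Y_i)) + \div_P(Y_i)^2]P$. Comparing this against the expansion of $\div_P(\half\div_P(Y_i)Y_i - X)$, obtained from the product rule $\div_P(gZ) = Zg + g\,\div_P(Z)$, shows the two expressions agree term by term, giving $\L^*P = \div_P(\mathfrak J(P))P$; the identity $\curl_P|_{\mathfrak X(\M)} = \div_P$ recorded above then rewrites this as $\curl_P(\mathfrak J(P))P$.

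I expect the only genuine subtlety to be the double integration by parts for the diffusion term — one must check that $\L_{Y_i}P$ is itself a smooth top-form to which Stokes applies, and track the two sign changes — while the reconciliation in the second part is a routine matching of the two Leibniz identities for $\L_Z$ and $\div_P$ acting on $gP$ and $gZ$ respectively.
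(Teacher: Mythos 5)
Your proposal is correct and follows essentially the same route as the paper's proof in \refsec{sec:fokker-density-adjoint}: Cartan's formula plus Stokes' theorem gives the integration-by-parts identity $\int (Zf)\,\dd P = -\int f\,\dd(\L_Z P)$, applied once for the drift and twice for the diffusion term (the paper likewise implicitly reuses the identity with the signed top-form $\L_{Y_i}P$ in place of $P$), and the density claim then follows from the Leibniz rules $\L_Z(gP) = (Zg)P + g\,\div_P(Z)P$ and $\div_P(gZ) = Zg + g\,\div_P(Z)$ together with $\curl_P|_{\mathfrak X(\M)} = \div_P$. The only difference is cosmetic: you package the key step as a standalone identity and spell out the density computation, which the paper leaves as a routine consequence.
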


Hence, the condition that $P$ is preserved by the diffusion,  namely $\L^*P=0$, reduces to the  condition that its Fokker--Planck current is $P$-preserving, i.e.,
\begin{equation}\label{eq:stationary-FP}
\L^*P=0 \quad \iff \quad \div_P \left( \mathfrak J(P)\right)=0.
\end{equation}
Now, as discussed in \refsec{sec:unpublished-results}, the $\R$-linearity of $\curl_P$, combined with the fact that it satisfies
$$\curl_P \circ \curl_P = P^{\sharp} \circ \dd \circ P^{\flat} \circ P^{\sharp} \circ \dd \circ P^{\flat} =
 P^{\sharp} \circ \dd \circ \dd \circ P^{\flat} = 0,$$
where we used that  $\dd \circ \dd = 0$, implies that $\curl_P$ is a \emph{boundary operator} on the space of multi-vector fields.
%\begin{align*}
%0 \xleftarrow[]{\curl_P} \X^0(\M) \xleftarrow[]{\curl_P} \X^1(\M)\xleftarrow[]{\curl_P} \X^{2}(\M) \xleftarrow[]{\curl_P} \X^3(\M) \xleftarrow[]{\curl_P} \cdots
%\end{align*}
Accordingly, the measure $P$ defines \emph{homology groups}
$$ \mathcal H_P^{k} (\M) \defn \frac{\ker \left( \curl_P : \mathfrak X^{k}(\M) \to \mathfrak X^{k-1}(\M)\right)}{\text{Im} \left( \curl_P  : \mathfrak X^{k+1}(\M) \to  \mathfrak X^k(\M) \right)} $$
that inform us about the properties of $\curl_P$-free multi-vector fields on $(\M,P)$.
In particular, the first homology group $\H^1_P(\M)$ of $P$ describes the discrepancy between $\div_P$-free vector fields and  curl vector fields, i.e., vector fields of the form $\curl_P(\A)$ for some $\A \in \mathfrak X^2(\M)$,
%$$ \mathcal H_P^{1} (\M) \defn \frac{\ker \left( \div_P : \mathfrak X(\M) \to C^{\infty}(\M)\right)}{\text{Im} \left( \curl_P  : \Gamma(\Lambda^{2} T\M) \to  \mathfrak X(\M) \right)}. $$
and it follows that the space of $\div_P$-free vector fields, to which the Fokker--Planck current $\mathfrak J(P)$ of $P$-preserving diffusions belongs,
is isomorphic to 
$$ \ker \left( \div_P : \mathfrak X(\M) \to C^{\infty}(\M)\right) \cong  \text{Im} \left( \curl_P  : \X^2(\M) \to  \mathfrak X(\M) \right) \oplus \H^1_P(\M). $$
As a result, any $P$-preserving vector field $Z \in \mathfrak X(\M)$, i.e., $\div_P(Z)=0$,
can be expressed as a sum of (1) a globally curled component $\curl_P(\A)$ for some $\A \in \mathfrak X^2(\M)$, and (2) an additional term belonging to the first homology group $\H^1_P(\M)$, associated to topological obstructions.

It remains to characterise the elements of these groups, which can be achieved by noting that  the $P$-musical isomorphism $P^{\sharp}$ induces isomorphisms between the homology groups $\H^k_P(\M)$ and the twisted de Rham cohomology groups $\H_{dR}^{n-k}(\M)$ (see remark \ref{dR}), implying that
$$\H^1_P(\M) \cong P^{\sharp}(\H_{dR}^{n-1}(\M)).$$ Thus, the additional topological obstruction term may be parametrised by
$P^{\sharp}(\gamma)$, where $\gamma$ is a closed, twisted $(n-1)$-form whose de Rham class is non-zero. 
In conclusion, any such $Z$ can be expressed as 
$$Z= \curl_P(\A) + P^{\sharp}(\gamma),$$ with $\A \in \mathfrak X^2(\M)$ and $\gamma \in \H^{n-1}_{dR}(\M)$ (see theorem \ref{thrm:Dyn-is-loc-curl} for more details).

\begin{remark}[Twisted de Rham Cohomology]\label{dR} The twisted de Rham cohomology groups, defined as \cite{bott2013differential}
$$\H^k_{dR}(\M) \defn \frac{\ker \left( \dd : \Omega^k_{\Or}(\M) \to \Omega^{k+1}_\Or(\M)\right)}{\mathrm{Im} \left( \dd  : \Omega^{k-1}_\Or(\M) \to  \Omega^k_\Or(\M) \right)},$$
describe the topology of the sample manifold $\M$, such as the number of connected components of an orientable manifold $\M$, which is given by the $0$-th twisted de Rham cohomology group $\H^0_{dR}(\M)$.  
They also provide information on the solutions to the equation $\dd \alpha = \beta$ \cite[Chap. 18]{lee2013smooth}.
% Now given a smooth measure $P$, one can relate these with the homology groups $\H^{l}_P(\M)$ via the relation 
%\[
%\begin{tikzcd}
%0  \arrow[r] &  \Omega^0(\M) \arrow[d,"P^\sharp"] \arrow[r, "\dd"] & \Omega^1(\M) \arrow[d,"P^\sharp"] \arrow[r, "\dd"] & \Omega^2(\M) \arrow[d, "P^\sharp"]  \arrow[r, "\dd"]  & \cdots \\
%0 \arrow[r]  & \X^n(\M) \arrow[r, "\curl_P"] & \X^{n-1}(\M) \arrow[r, "\curl_P"] & \X^{n-2}(\M) \arrow[r, "\curl_P"] & \cdots 
%\end{tikzcd}
%\]
Similarly, the measure-informed homology groups $\H^{n-k}_P(\M) = P^\sharp(\H^{k}_{dR}(\M))$ inform us about the solutions of the equation $\curl_P(\mathcal V) = \mathcal W$, of which the stationary Fokker--Planck equation \eqref{eq:stationary-FP} is a special case. 
\end{remark}

\begin{remark}[Fokker--Planck Operators] The Fokker--Planck operator was defined as the adjoint of $\L$ with respect to the  pairing $\metric{f}{P}_* = \int_\M f \dd P$.
 However in the previous section, it was defined as the adjoint with respect to the pairing $C_c^{\infty}(\M) \times C^{\infty}(\M)\to \R$ defined as $\metric{f}{h}_{\mu_\M} = \int fh \dd \mu_\M$.
 The difference between these two pairings is that the former is a special case of the standard Poincaré bilinear form on the manifold $\M$, i.e., $\metric{\alpha}{\beta}_* \defn \int \alpha \wedge \beta$, while the latter  is associated to the  induced measure-informed bilinear form via the $P$-musical flattening, that is, 
 $\metric{\cdot}{\cdot}_P \defn \metric{\cdot}{P^{\flat} \cdot}_*$.
 The relation between the various definitions of the Fokker--Planck operator is then
 $$ \metric{\L f}{P}_* = \metric{f}{\L^*P}_*= \metric{f}{\div_P(\mathfrak{J}(P))P}_* = \metric{f}{\div_P(\mathfrak{J}(P))}_P = \metric{f}{\div_{\mu_\M}(\mathfrak J(e^{-H}))}_{\mu_\M}.$$
% Noting that $curl_P$ is the $\metric{\cdot}{\cdot}_P$-adjoint to $\dd$, we also  have $\metric{\L f}{P}_* =- \metric{\dd f}{\mathfrak{J}(P)}_P$.
%Topologically, we thus see that the condition of stationarity of $P$ in weak form, namely,
% $\metric{\L f}{P}_*=0$ for all $f \in C_c^{\infty}(\M)$, is equivalent to $\mathfrak J(P) $ belonging to the orthogonal complement of the cohomology class $[0]^1_c = \{ \dd f : f \in \Omega^0_c(\M)\}$ of $0$ in the compactly supported first cohomology group $H^1_{dR,c}(\M)$:
% $$\mathfrak J(P) \in ([0]^1_c)^{\top} = \{ X \in \mathfrak X(\M) : \metric{\alpha}{X}_P =0  \quad
% \forall \alpha \in [0]^1_c  \}.$$
\end{remark} 

When focussing on just the {\em local} representations of $P$-preserving diffusions, we can also use the musical isomorphisms of $P$ to consider a {\bf ``$P$-twisted'' Poincar\'e lemma}\footnote{The standard Poincar\'e lemma states that any closed differential form $\alpha$ can be expressed locally as an exact form $\dd \beta$. The `$P$-twisted' Poincaré lemma is the  measure-informed analogue   to this on the space of multi-vector fields.}, which states that any vector field $Z$ that preserves the measure $P$ can be expressed locally as {\em curled vector fields} (see theorem \ref{thrm:Dyn-is-loc-curl}). More precisely, this means that we can find a neighbourhood $\iota _U:U \hookrightarrow \M$ around any point in $\M$, where $\iota_U$ is the inclusion, and a locally defined $\A \in \X^2(U)$ for which 
$$Z|_U= \curl_{\iota^*_U P}(\A)$$
holds.
Combining these results with the fact that the Fokker--Planck current of $P$-preserving diffusions is $\div_P$-free, we  obtain the following complete characterisation, or recipe, of $P$-preserving diffusions:
%(proved in \refsec{sec:proof-complete-A-diffusion}):

\begin{theorem}[Local and global completeness of $\A$-diffusions]\label{thm:completeness}
The smooth, positive target measure $P$
is a stationary measure of the general diffusion process
\begin{equation}\label{eq:arbitrary-diffusion}
\diff Z_t = X(Z_t) \diff t +  Y_i(Z_t) \circ \diff W^i_t\,
\end{equation}
 if and only if
the drift takes the form 
$X=\curl_{\iota_U^*P} (\A) + \half \div_{\iota_U^*P}(Y_i)Y_i$ on a neighbourhood $\iota_U:U \hookrightarrow \M$ of any point, for some local antisymmetric bracket $\A \in \mathfrak X^2(U)$. Thus, any such diffusions may be locally represented as 
\begin{equation}\label{eq:A-diff-theorem}
\diff Z_t  = \curl_{\iota_U^*P} (\A)\diff t + \half \div_{\iota^*_U P}(Y_i)Y_i \diff t +  Y_i(Z_t) \circ \diff W^i_t,
\end{equation}
which we will also refer to as $\A$-diffusions since as discussed below, they recover the original $\A$-diffusions \eqref{eq:modular-diffusion} when $P$ is written in the form \eqref{gibbs-def}.
Hence, the class of $\A$-diffusions is {\em locally complete}. Furthermore, they are {\em globally complete}, that is, any $P$-preserving diffusions is of the form 
\eqref{eq:A-diff-theorem} globally, if and only if the first homology group of $P$, or equivalently the $(n-1)^{st}$-twisted de Rham cohomology group, is trivial.
More generally, 
any $P$-preserving diffusion on any manifold can be expressed  as
$$ \diff Z_t  = \left(\curl_{P} (\A) + P^{\sharp}(\gamma) +\half \div_{P}(Y_i)Y_i \right)\diff t +  Y_i(Z_t) \circ \diff W^i_t,$$
for some $\A \in \mathfrak X^2(\M)$, and $\gamma \in H^{n-1}_{dR}(\M)$.
\end{theorem}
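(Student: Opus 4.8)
The plan is to assemble the theorem from the homological machinery developed above, reducing every claim to a statement about the Fokker--Planck current $\mathfrak{J}(P) = \half\div_P(Y_i)Y_i - X$ and its position in the chain complex $(\mathfrak X^\bullet(\M),\curl_P)$. First I would invoke the preceding lemma, which expresses the Fokker--Planck operator as $\L^*P = \div_P(\mathfrak{J}(P))P$, so that stationarity is equivalent to the purely algebraic condition $\div_P(\mathfrak{J}(P)) = 0$; that is, $P$ is preserved if and only if its Fokker--Planck current is a $\div_P$-free (equivalently $\curl_P$-closed) vector field. This converts the analytic problem of invariance into the problem of characterising the kernel of $\curl_P$ on $\mathfrak X(\M)$.

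For the local statement I would use the $P$-twisted Poincar\'e lemma (theorem \ref{thrm:Dyn-is-loc-curl}). Solving $X = \half\div_{\iota_U^*P}(Y_i)Y_i + \curl_{\iota_U^*P}(\A)$ for the current is the same as writing $\mathfrak{J}(P)|_U = -\curl_{\iota_U^*P}(\A)$; since $\curl_P$ restricted to vector fields is $\div_P$ and $\curl_P\circ\curl_P = 0$, any such $X$ automatically yields a $\div_P$-free current, giving the easy backward direction. For the forward direction, a $\div_P$-free $\mathfrak{J}(P)$ transports under the isomorphism $P^\flat$ to a closed twisted $(n-1)$-form $P^\flat(\mathfrak{J}(P))$, which the ordinary Poincar\'e lemma for twisted forms renders locally exact as $\dd\beta$; setting $\A = -P^\sharp(\beta)\in\mathfrak X^2(U)$ and using that $P^\flat,P^\sharp$ are mutually inverse recovers $\mathfrak{J}(P)|_U = -\curl_{\iota_U^*P}(\A)$, hence the required local form of $X$. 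Because divergence is a local operator, $\div_P(\cdot)|_U = \div_{\iota_U^*P}(\cdot|_U)$, so no cross-chart compatibility issue arises.

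The global statements would follow from the Hodge-type decomposition $\ker(\div_P:\mathfrak X(\M)\to C^\infty(\M)) \cong \mathrm{Im}(\curl_P:\mathfrak X^2(\M)\to\mathfrak X(\M))\oplus\H^1_P(\M)$ together with $\H^1_P(\M)\cong P^\sharp(\H^{n-1}_{dR}(\M))$. Writing the $\div_P$-free current as $\mathfrak{J}(P) = \curl_P(\A) + P^\sharp(\gamma)$ and substituting into $X = \half\div_P(Y_i)Y_i - \mathfrak{J}(P)$, then relabelling $\A\mapsto-\A$ and $\gamma\mapsto-\gamma$ by linearity of $\curl_P$ and $P^\sharp$, yields exactly the stated global form. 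Global completeness (no $P^\sharp(\gamma)$ term needed) holds precisely when $\H^1_P(\M)=0$: if the group vanishes the decomposition forces $\mathfrak{J}(P)\in\mathrm{Im}(\curl_P)$, while if it does not vanish I would exhibit a counterexample by taking $Y_i=0$ and $X=-Z$ with $Z$ a $\div_P$-free field whose class in $\H^1_P(\M)$ is nonzero; the resulting diffusion preserves $P$ yet $X$ cannot be globally written as $\curl_P(\A)$.

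The main obstacle is the local exactness step, i.e. the $P$-twisted Poincar\'e lemma: everything else is formal rearrangement, but it is here that one must genuinely translate the measure-informed condition $\div_P(\mathfrak{J}(P))=0$ into closedness of a twisted form and invoke local exactness. The delicate points are carrying out the argument for \emph{twisted} forms $\Omega^{n-1}_\Or(\M)$ (so that orientability of $\M$ is never assumed) and identifying the obstruction to globalising the bivector potential \emph{canonically} with $\H^{n-1}_{dR}(\M)$ through $P^\sharp$, independently of any auxiliary metric or reference measure.
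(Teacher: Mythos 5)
Your proposal is correct and takes essentially the same route as the paper: stationarity is reduced via the Fokker--Planck adjoint lemma to $\div_P(\mathfrak J(P))=0$, and the local and global structure of the current is then read off from theorem \ref{thrm:Dyn-is-loc-curl} --- the $P$-twisted Poincar\'e lemma for the local statement and the isomorphism $\H^1_P(\M)\cong P^{\sharp}(\H^{n-1}_{dR}(\M))$ for the global one --- exactly as in the text, with the sign bookkeeping between $X$ and $\mathfrak J(P)$ handled correctly. Your explicit counterexample for the failure of global completeness (taking $Y_i=0$ and drift equal to a $\div_P$-free field with nonzero homology class) is a small addition that the paper leaves implicit in the direct-sum decomposition.
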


We stress that this result not only generalises the complete recipe derived in \cite{ma2015complete}, but also provides a re-interpretation of its derivation in terms of the canonical  geometry of the target measure $P$. A parallel can be made with the construction of potentials in physics.
Indeed, conservative fields in physics are usually represented by potentials. 
For example in classical mechanics, the Newton force $F$ is {\em conservative} if the net work done by it along any two (piecewise smooth) paths  $\lambda, \tilde{\lambda}$ with the same end points is the same, i.e., $ \int_{\lambda} F = \int_{\tilde{\lambda}} F$,  \cite[theorem 11.42]{lee2013smooth}.
Such force fields are represented by potential energy functions $V$, so we can find $V$ such that 
$F = \dd V$.
Similarly, in the theory of electromagnetism,  Gauss's and  Faraday's laws may be represented in the form $ \dd \bf{F}=0$, where $\bf{F}$ is the electromagnetic 2-form. These laws are equivalent to the existence of a magnetic potential 1-form $\bf{A}$ for which $\bf{F} = \dd {\bf}\bf A$~\cite{baez1994gauge}.

The situation is analogous in the context of $P$-preserving diffusions, except that we  need to adjust to the geometry induced by the target $P$, and use its canonical differential operator $\curl_P$ in place of the exterior derivative $\dd$.
As mentioned above, the condition that the diffusion \eqref{eq:arbitrary-diffusion} preserves $P$ is simply a condition that the Fokker--Planck current
$\mathfrak J(P) \defn \half \div_P(Y_i)Y_i -X$ conserves $P$, i.e., $\div_P(\mathfrak J(P))=0$.
Such conservative currents are locally represented by a `potential' $\A$, i.e., $\mathfrak J(P) = \curl_P(\A)$.
Surprisingly, this procedure is entirely canonical: it only depends on the volume manifold $(\M,P)$, which are the only objects we are given a-priori in many applications.
In particular on $\M=\R^n$, the de twisted de Rham cohomology is trivial, so this procedure holds globally. 
Thus we recover the result that if $Z$ is divergence-free (with respect to the Lebesgue measure),
i.e., $\nabla \cdot Z=0$, then there exists an antisymmetric matrix $\A$ for which $Z = \nabla \cdot \A$ \cite{mclachlan2002splitting}.
 This is all that is needed to obtain the Euclidean complete recipe of \cite{ma2015complete}.
 Consequently, our result shows that the integrability assumption in \cite{ma2015complete} is not needed - it is merely a consequence of the specific choice of potential constructed in the proof using Fourier transforms (see for example \cite{mclachlan2002splitting} for an alternative construction).
Indeed, we note that just as the potential energy of conservative forces has a gauge invariance defined by adding a constant to $V$, $V\mapsto V+c$ (which is the reason why HMC does not require knowledge of the normalising constant of the target measure), and $\bf{F}$ is invariant under the gauge transformation $\bf{A} \mapsto \bf{A} + \dd \psi$ $\,\,\forall \psi \in C^{\infty}(\M)$, the choice of antisymmetric bracket $\A$ has a natural gauge freedom obtained by
shifting $\A \mapsto \A + \curl_P(\mathcal V)$, with $\mathcal V \in \mathfrak X^3(\M)$ an arbitrary 3-vector field,
since
$$ \curl_{P}\left(\A + \curl_P(\mathcal V) \right) =  \curl_{P}\left(\A \right)+0=\curl_{P}\left(\A \right), $$
 thus making the choice of $\A$ non-unique. In \refsec{sec:volume-free-diffusions} we will discuss the benefits of choosing a ``conservative'' bracket, that is, $\A \defn \curl_{\mu_\M}(\mathcal V)$ for some $\mathcal V \in \mathfrak{X}^3(\M)$.

Finally, to recover the $\A$-diffusion  \eqref{eq:modular-diffusion} expressed in terms of a reference measure $\mu_\M$ from the canonical $\A$-diffusion \eqref{eq:A-diff-theorem}, 
we simply need to express the $\curl$ of $P=e^{-H}\mu_\M$ in terms of the curl of $\mu_\M$,
%which may be done by `twisting' the de Rham derivative: $\curl_P = \mu_\M^{\sharp} \circ \dd_H \circ \mu_\M^{\flat}$.
which may be achieved by noting that 
$$\curl_P \defn P^{\sharp} \circ \dd \circ P^{\flat} = \mu_\M^{\sharp} \circ \dd_H \circ \mu_\M^{\flat},$$ where $\dd_H \defn \dd - \dd H \wedge \cdot$ is the `distorted' exterior derivative \cite{guedira1984geometrie,rohm1986antisymmetric}.
This is precisely the expression for $\curl_{\mu_\M}$ with $\dd $ replaced by 
$\dd_H$ and one can check that the additional `twist' $\dd H \wedge \cdot$ generates both log-density terms $X_H^\A$ and $Y_i(H)Y_i$ in  \eqref{eq:modular-diffusion} (see \refsec{sec:derivation-of-canonical-A-diffusion} for a more detailed derivation).
Hence, if $P=e^{-H}\mu_\M$, we have
$$ \curl_P(\A) = \curl_{\mu_\M}(\A) + X^\A_H =X^{\mu_\M}_{\A} + X^\A_H, \quad \div_{P}(Y_i)Y_i = \div_{\mu_\M}(Y_i)Y_i - Y_i(H)Y_i.$$
It is interesting to observe that 
the $P$-preserving vector field 
$\curl_P(\A)$ always splits into a volume-preserving term $X^{\mu_\M}_\A$, i.e.,  $\L_{X^{\mu_\M}_\A}\mu_\M =0$, and a density-preserving term $X_H^\A$, i.e., 
$\L_{X_H^\A} p_\infty =0.$

\begin{remark}
Let us briefly explain why $\curl_{\mu_\M}$ is called ``curl''.
Note that when $\M = \R^3$ and $\mu_\M  = \dd \bf{x}$ $\defn \dd x \, \dd y \, \dd z$, we can write any bi-vector field $\A \in \mathfrak X^2(\R^3)$ as $\A  \defn \A _x \partial_  y \wedge \partial_  z + \A _y \partial_ z \wedge \partial_  x + \A _z \partial_  x \wedge \partial_  y$, and $\curl_{\mu_\M}$ corresponds to the classical curl of the ``vector  field'' $(\A_x,\A_y,\A_y)$:
$$ \curl_{\diff \bf{x}}(\A ) = \left( \frac{ \partial \A _y}{\partial z}-\frac{\partial \A _z}{\partial y} \right) \partial_x+
\left( \frac{ \partial \A _z}{\partial x}-\frac{\partial \A _x}{\partial z} \right) \partial_y+
\left(\frac{ \partial \A _x}{\partial y}- \frac{\partial \A _y}{\partial x} \right) \partial_z. $$
In particular, any $\dd \bf{x}$-preserving vector field can be written as above, which we may also view as a sum of Hamiltonian vector fields on the coordinate 2-surfaces:
$$
 \curl_{\diff \bf{x}}(\A ) = \underbrace{\left( \frac{ \partial \A _y}{\partial z}\partial_x- \frac{\partial \A_y }{\partial x} \partial_z \right)}_{\text{Ham. field of } \A_y \text{ on x-z plane}}+ 
\underbrace{\left( \frac{ \partial \A_z }{\partial x}\partial_y
- \frac{\partial \A_z }{\partial y} \partial_x  \right)}_{\text{Ham. field of } \A_z \text{ on x-y plane}} +
\underbrace{\left(\frac{ \partial \A _x}{\partial y} \partial_z -\frac{\partial \A _x} {\partial z} \partial_y\right)}_{\text{Ham. field of } \A_x \text{ on z-y plane}} .$$
(Recall the Hamiltonian vector field of $H$ in Darboux coordinates $(q,p)$ is
$ \frac{ \partial H}{\partial p}\partial_q- \frac{\partial H }{\partial q} \partial_p$). 
\end{remark}

\begin{remark}
In the Euclidean space recipe, the terms associated with the $2^{nd}$-order tensors
$ \nabla \cdot D$  and $\nabla \cdot Q$ look identical -- they are obtained by applying the ``divergence'' $\nabla \cdot$ to $D$ and $Q$, which are both second-order objects (i.e., tensors with two indices).
In our geometric formulation, we observe that $\nabla \cdot D$ is actually obtained by differentiating  the noise vector fields $\div_P(Y_i)Y_i$ which are first-order tensors,  while $\nabla \cdot Q$, corresponding to the term $\curl_{P}(\A)$ in our formulation, genuinely involves differentiating a second-order tensor $\A$. 
In particular, while it is true that $\curl \circ \curl=0$, the equality $\nabla \cdot \nabla \cdot =0$ is only valid when applied to the antisymmetric component, i.e.,
$$ \nabla \cdot \nabla \cdot D \neq 0, \qquad \nabla \cdot \nabla \cdot Q=0.$$
The geometric formulation properly distinguishes these operations:
$$ \div_{\mu_\M}\left( \div_{\mu_\M}(Y_i)Y_i \right) = Y_i(\div_{\mu_\M}(Y_i))+ \left(\div_{\mu_\M}(Y_i)\right)^2,\qquad \div_{\mu_\M} \left(\curl_{\mu_\M}(\A) \right)=0, $$
so that while $\curl_{\mu_\M}(\A)$ is always volume-preserving, this is generally not the case for $\div_{\mu_\M}(Y_i)Y_i$ (an important exception is the Langevin diffusion as we shall discuss in \refsec{sec:volume-free-diffusions}).
\end{remark}

In the next section we will describe the important scenario where the sample space $\M$ is compact, in which case the topological obstructions can be represented explicitly in terms of harmonic forms.

\section{Measure-Preserving Diffusions on Compact Manifolds}\label{sec:A-diffusion-compact}

We saw previously 
that when the topology  of the sample space contain topological obstructions, it is necessary to add an additional term in $\A$-diffusions representing the non-triviality of the homology of $P$.
For compact orientable manifolds, we may use the de Rham--Hodge--Kodaira's decomposition of differential forms to construct the Fokker--Planck current of general $P$-preserving measures, as shown in the following theorem (proved in \refsec{sec:Riemannian-proof}).
\begin{theorem}\label{thm:diffusion-compact}
 Let $\M$ be a compact orientable Riemannian manifold and let $\mu_\M \defn \vol$ and $\nabla \cdot$ denote respectively the Riemannian measure and divergence.
Then, any $e^{-H}\vol$-preserving diffusion has the form
 \begin{equation}\label{eq:diffusion-compact}
\diff  Z_t \defn  \left(X_H  + \half \left(    \nabla \cdot Y_i -Y_i(H) \right)Y_i -   \nabla \cdot \A  +  e^{H} \sharp \star ^{-1}\zeta  \right) \diff t + Y_i \circ \diff W^i_t,
\end{equation}
where $\star$ is the Hodge star operator, $\sharp$ is the Riemannian musical isomorphism, $\A$ is an antisymmetric tensor and $\zeta$ is a harmonic $(n-1)$-form (i.e., it satisfies ``Maxwell's equations'' $\diff \zeta=0$, $\diff \star \zeta =0$).
\end{theorem}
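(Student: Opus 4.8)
The plan is to specialise the global completeness result (Theorem~\ref{thm:completeness}) to the compact orientable Riemannian setting and then translate each intrinsic term into the language of the Hodge star and the codifferential. Since $\M$ is orientable, the Riemannian measure $\vol$ trivialises the orientation bundle, so twisted forms coincide with ordinary forms and the $(n-1)$-twisted de Rham cohomology is just $H^{n-1}_{dR}(\M)$. Starting from the global form guaranteed by Theorem~\ref{thm:completeness},
$$\diff Z_t = \left(\curl_P(\A) + P^{\sharp}(\gamma) + \half\div_P(Y_i)Y_i\right)\diff t + Y_i\circ\diff W^i_t,$$
with $P = e^{-H}\vol$, I would first apply the decompositions $\curl_P(\A) = \curl_{\vol}(\A) + X_H^\A$ and $\div_P(Y_i)Y_i = \div_{\vol}(Y_i)Y_i - Y_i(H)Y_i$ established at the end of the previous section. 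Writing $\div_{\vol} = \nabla\cdot$ immediately yields the $\A$-Hamiltonian term $X_H^\A$ (written $X_H$ in the statement) together with $\half(\nabla\cdot Y_i - Y_i(H))Y_i$.

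It then remains to identify $\curl_{\vol}(\A)$ with $-\nabla\cdot\A$ and $P^{\sharp}(\gamma)$ with $e^H\sharp\star^{-1}\zeta$. For the first, I would use the classical identity $\vol^{\flat}(X) = i_X\vol = \star X^{\flat}$ relating the $\vol$-flattening to the Hodge star and the Riemannian flat, together with its rank-two analogue $\vol^{\flat}(\A) = \star\A^{\flat}$. Inverting gives $\vol^{\sharp}(\omega) = \sharp\star^{-1}\omega$ on $(n-1)$-forms, so that $\curl_{\vol}(\A) = \vol^{\sharp}\circ\dd\circ\vol^{\flat}(\A) = \sharp\star^{-1}\dd\star\A^{\flat}$. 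Recognising $\star^{-1}\dd\star$ as the codifferential $\delta$ (up to the standard sign $(-1)^{n(k+1)+1}$) and recalling that $\nabla\cdot$ on bivector fields is, by Theorem~\ref{lemma:div-curl}, the modular field $X_\A^{\vol} = \curl_{\vol}(\A)$, careful sign bookkeeping delivers $-\nabla\cdot\A$.

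For the topological obstruction, I would invoke the de Rham--Hodge--Kodaira decomposition, which is where compactness is essential: on a compact orientable Riemannian manifold every class in $H^{n-1}_{dR}(\M)$ has a \emph{unique} harmonic representative $\zeta$, and since $\Delta = \dd\delta + \delta\dd$, harmonicity on a compact manifold is equivalent to being closed and co-closed, i.e. $\dd\zeta = 0$ and $\delta\zeta = 0$, equivalently $\dd\zeta = 0$ and $\dd\star\zeta = 0$ --- the ``Maxwell's equations'' named in the statement. Choosing $\gamma = \zeta$, I would compute $P^{\sharp}(\zeta)$ directly from the defining relation $P^{\flat}(X) = i_X(e^{-H}\vol) = e^{-H}\star X^{\flat}$: solving $e^{-H}\star X^{\flat} = \zeta$ gives $X^{\flat} = e^H\star^{-1}\zeta$, hence $P^{\sharp}(\zeta) = \sharp(e^H\star^{-1}\zeta) = e^H\sharp\star^{-1}\zeta$, exactly the remaining term.

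The main obstacle I anticipate is not conceptual but the careful tracking of sign and ordering conventions, so that all terms come out consistent simultaneously: the codifferential sign $(-1)^{n(k+1)+1}$, the interaction between the Riemannian musical isomorphisms $\flat,\sharp$ and the $P$-musical isomorphisms $P^{\flat},P^{\sharp}$, the convention defining the interior product of a bivector field, and the precise definition of $\nabla\cdot$ on antisymmetric tensors. In particular, the $-\nabla\cdot\A$ sign and the $e^H$ prefactor must be reconciled with both the decomposition of $\curl_P$ used above and the Euclidean recipe recovered earlier, so I would verify these signs against the orthonormal-frame computation ($i_{e_1\wedge e_2}\vol = \star(e^1\wedge e^2)$ up to sign) before finalising the statement.
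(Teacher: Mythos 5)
Your proposal is correct, but it takes a genuinely different route from the paper's own proof. The paper does not invoke Theorem \ref{thm:completeness} here at all: its argument (in the appendix) works directly with the stationary Fokker--Planck equation, using $\div_{\vol} = \delta \circ \flat$ to rewrite stationarity as $\delta\bigl(-p_\infty X + \half \nabla\cdot(p_\infty Y_i)Y_i\bigr)^\flat = 0$, i.e.\ $\dd \star (\,\cdot\,)^\flat = 0$; it then applies the Hodge decomposition to this closed $(n-1)$-form, writing $\star(\,\cdot\,)^\flat = \dd\alpha - \gamma$ with $\gamma$ harmonic, pulls back through $\star^{-1}$ to express the flattened current as $\delta\varepsilon - \star^{-1}\gamma$ with $\varepsilon \defn \star^{-1}\alpha$ a $2$-form, and identifies $\sharp\,\delta\varepsilon = \nabla\cdot(\A\, p_\infty)$, finally using $\nabla\cdot(\A\, p_\infty) = p_\infty \nabla\cdot\A - p_\infty X_H^{\A}$ (equivalently $\nabla\cdot\A = -\curl_{\vol}(\A)$, verified by the coordinate computation after the proof) to extract the drift. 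You instead specialise the global completeness theorem and translate term by term: the decompositions $\curl_P(\A) = \curl_{\vol}(\A) + X_H^{\A}$ and $\div_P(Y_i)Y_i = \nabla\cdot(Y_i)Y_i - Y_i(H)Y_i$, the identification $\curl_{\vol}(\A) = -\nabla\cdot\A$, and the direct computation $P^{\flat}(X) = e^{-H}\star X^{\flat}$ giving $P^{\sharp}(\zeta) = e^{H}\sharp\star^{-1}\zeta$, all of which are correct and consistent with the paper. Your route makes the structural origin of the harmonic term transparent (it is visibly the topological obstruction of the general theory, which is precisely how the paper \emph{motivates} the result in the discussion following the theorem statement), at the price of importing the twisted-form and $\curl_P$-homology machinery of Theorem \ref{thm:completeness}; the paper's route is self-contained Riemannian Hodge theory and constructs $\A$ explicitly from the $2$-form $\varepsilon$. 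One small step you elide: when you ``choose $\gamma = \zeta$'' harmonic, the exact discrepancy $\gamma - \zeta = \dd\beta$ must be absorbed into the potential via $\A \mapsto \A + P^{\sharp}(\beta)$, using $P^{\sharp}(\dd\beta) = \curl_P\bigl(P^{\sharp}(\beta)\bigr)$ --- immediate, but worth a sentence; this is a presentational omission, not a gap.
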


A similar result can be found in \cite{ikeda2014stochastic}, although the authors assume that the diffusion is also non-degenerate (i.e., its generator is elliptic) in order to obtain a Riemannian metric from the noise process (the Riemannian Brownian motion), which is then used to turn the Fokker--Planck current $\mathfrak J(P)$ into a 1-form that can be analysed through its de Rham--Hodge--Kodaira decomposition.
In order to clarify the roles played by 
 the assumptions of compactness and non-degeneracy, we have treated them separately, the latter which can be found in \refsec{sec:examples-A-diff}.
%We shall discuss the case of non-degenerate diffusions in more details in \refsec{sec:examples-A-diff}.

Note that in the above theorem \ref{thm:diffusion-compact}, we do not make any assumptions on the noise, although it assumes that we can express our target measure $P$ in terms of the Riemannian measure $\vol$, which might be inconvenient in practice. 
Topologically, the  presence of the harmonic term in \eqref{eq:diffusion-compact} may be understood from the fact that on compact orientable manifolds, the twisted de Rham cohomology groups are isomorphic to the space of harmonic forms. 
It follows that  $P$-preserving vector fields have the form $\curl_P(\A)+ P^{\sharp}(\zeta)$ for an antisymmetric bracket $\A$ and a non-zero harmonic $(n-1)$-form $\zeta$ (see \refsec{sec:unpublished-results}).
 Hence, on compact manifolds, any $P$-preserving diffusions take the form
$$\diff Z_t = \underbrace{ \curl_{P}( \A) \, \diff t +  \half \div_{P}(Y_i)Y_i \,\diff t+Y_i \circ \diff W^i_t}_{\A\text{-diffusion}}+ \underbrace{ P^{\sharp}(\zeta)\,\diff t}_{\text{harmonic obstruction}} ,$$
for some harmonic $(n-1)$-form $\zeta$ associated to an arbitrary Riemannian metric.

\section{Reversibility}
\label{sec:rever-dissip}
Recall that the Euclidean recipe  for measure-preserving diffusions \eqref{eq:intro-complete} depends entirely on an antisymmetric matrix $Q$ and a symmetric positive semi-definite matrix $D$. On the other hand, the geometric generalisation we have derived in previous sections is constructed using a bi-vector field $\A$ and a set of noise-vector fields $(Y_i)$.
 In order to make this connection clearer, 
we now discuss the symmetric/antisymmetric decomposition of $\A$-diffusions and its relation to the notion of reversibility. 
For this, we first note that the noise vector fields $\{Y_i\}_{i=1}^N$ canonically generate a symmetric bracket, denoted 
$\SS \defn Y_i \otimes Y_i$, by setting
$$\SS(\dd f,\dd g)  \defn Y_i(f)Y_i(g) \quad \text{ for any } f,g \in C^{\infty}(\M)\, .$$
Introducing the notation
\begin{equation}\label{eq:bracket notation}
\{f,g \}_\bi\defn \bi(\dd f, \dd g),
\end{equation} 
for a general bracket $\bi$,
we see that the symmetric bracket defined above is \emph{dissipative}, in the sense that it satisfies the dissipative property
$$  \{ f,f\}_\SS = \sum_i Y_i(f)^2 \geq 0\, ,$$
which further implies 
$$ \{ f,g\}_\SS^2 \leq  \{ f,f\}_\SS  \{ g,g\}_\SS.$$
This contrasts with the antisymmetric bracket, which has the conservative property
$$ \{ f, f\}_\A = 0\, .$$
\begin{remark}
In the context of mechanics, special forms of (symmetric) dissipative  brackets $\SS$ have been considered by several authors to model dissipative components of mechanical systems in an attempt to cast these systems from an algebraic framework. Examples include the metriplectic bracket \cite{kaufman1984dissipative,
morrison1986paradigm,grmela1986bracket,
guha2007metriplectic,materassi2016entropy}, double-bracket \cite{brockett1991dynamical,bloch1996euler}, and selective-decay bracket \cite{gay2013selective}. Whereas in these works the symmetric structures are constructed in an ad hoc manner, it would be interesting to understand them as arising from noise vector fields chosen to model fluctuations, as we do here. This connection shall be further explored in a separate paper by the authors.
\end{remark}

\begin{example}
When the bi-vector field $\A$ has nice properties such as symmetries, it can be desirable to employ it to construct the noise vector-fields. This may be done using ``noise functions" $(H_i)$ and choosing $Y_i \defn X^\A_{H_i}$. Such a mechanism was used to construct a coordinate-independent irreversible MCMC sampler on Lie groups in  \cite{arnaudon2019irreversible}. In that case, the bracket $\SS$ safisfies 
$\SS(\dd f, \dd g)= \A( \dd H_i,\dd f) \A( \dd H_i, \dd g)$, and the generator  of the diffusion has a double bracket form (this should not be confused with the notion of ``double brackets'' in the sense of Brockett and Bloch).
\end{example}
A bracket that is decomposed into the sum of an antisymmetric and dissipative bracket is known as a \emph{thermodynamic bracket}. It follows that  $P$-preserving diffusions are parametrised by thermodynamic brackets up to topological obstructions.
Thus, many properties of the diffusion can be studied through its thermodynamic bracket; for example  in \refsec{sec:flow on space of measures}, we will see that the thermodynamic  bracket of the diffusion provides a simple formula to evaluate the rate of change of functionals on volume measures along the diffusion.

If we define the divergence of the dissipative bracket $\SS$ constructed above  by $\div_P(\SS)\defn  \div_P(Y_i)Y_i$, we can decompose the drift of the $\A$-diffusion into components associated with the dissipative and antisymmetric brackets:
\begin{equation}\label{eq:dissipative-form-A-diffusion}
\diff Z_t = \underbrace{\curl_{P}( \A) \, \diff t}_{\text{antisymmetric}} + \underbrace{\half \div_{P}(\SS) \,\diff t}_{\text{dissipative}}+\underbrace{Y_i \circ \diff W^i_t}_{\text{noise}},
\end{equation}
which for $P = e^{-\beta H} \mu_\M$, further decomposes into (compare with \eqref{eq:intro-complete})
\begin{equation}\label{eq:dissipative-form-A-diffusion-2}
 \diff Z_t = \beta\left(X_H^\A - \half X_H^\SS \right) \diff t+ \left(  \half \div_{\mu_\M}(\SS)+\curl_{\mu_\M}(\A) \right) \diff t+ Y_i \circ \diff W^i_t.
\end{equation}

The following corollary derives a decomposition of the corresponding generator into symmetric and anti-symmetric parts, generalizing the standard result in the Euclidean case \cite{pavliotis2014stochastic}. This result is useful as for instance, it enables us to build the most general Stein operators on manifolds available from the generator approach.
\begin{corollary} The generator of a $P$-preserving diffusion expressed in the form of \eqref{eq:intro-A-diff} can be written as (recall the definition of the differential operator $X^P_\bi$ in \eqref{eq:def-modular-field})
\begin{equation}\label{eq:generator-sym-anti}
\L f = \underbrace{ X^{P}_\A(f) +P^{\sharp}(\gamma)(f)}_{L^2(P)\text{-antisymmetric}} +  \underbrace{\half X^{P}_\SS(f)}_{L^2(P)\text{-symmetric}}.
\end{equation}
Moreover, $\half X^{P}_\SS$ is  symmetric in $L^2(P)$, while $X^{P}_\A$ and $P^{\sharp}(\gamma)$  are both  antisymmetric  in $L^2(P)$. That is,
$$ \metric{X^P_\A f}{h}_P= -\metric{f}{X^P_\A h}_P , \qquad \metric{X^P_\SS f}{h}_P= \metric{f}{X^P_\SS h}_P,  $$
where $\metric{\cdot}{\cdot}_P$ denotes the $L^2(P)$ pseudo-inner product, $\metric{f}{h}_P \defn \int fh \,\dd P$.
Hence, the generator $\L$ is symmetric  if and only if  $X_\A^P +P^{\sharp}(\gamma)=0$.
In general, the generator of \eqref{general-SDE}  satisfies  $\L = \half X^P_\SS$ if and only if the Fokker-Planck current of $P$  vanishes,   in which case, we say that $\L$ satisfies the \Definition{detailed balance condition}, and the diffusion is \Definition{reversible}.
Finally, we have that $\half X^{P}_\SS$ is non-positive, i.e., 
$$\metric{\half X^{P}_\SS(f)}{f}_P \leq 0$$
for all $f \in C_c^{\infty}(\M)$.
\end{corollary}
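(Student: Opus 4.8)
The plan is to substitute the explicit drift $X = \curl_P(\A) + P^{\sharp}(\gamma) + \half\div_P(Y_i)Y_i$ of an $\A$-diffusion into the generator $\L f = Xf + \half Y_iY_if$, and then read off every claimed (anti)symmetry property from a single integration-by-parts identity for $\div_P$. The tool I will use throughout is that, for any vector field $V$ and any $g \in C_c^{\infty}(\M)$, one has $\int_\M V(g)\,\dd P = -\int_\M g\,\div_P(V)\,\dd P$; this follows from $\int_\M \L_V(gP) = 0$ together with $\L_V(gP) = (V(g) + g\,\div_P(V))\,P$. Restricting to compactly supported $f,h$ and using the twisted top-form $gP$ lets Stokes' theorem apply even on a non-orientable, non-compact $\M$, which I expect to be the only point requiring genuine care.

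First I would establish the decomposition \eqref{eq:generator-sym-anti}. Using Theorem \ref{lemma:div-curl} to write $\curl_P(\A) = X^P_\A$, and the Leibniz rule $\div_P(Y_i(f)Y_i) = \div_P(Y_i)\,Y_i(f) + Y_iY_i(f)$, the two noise-related contributions recombine as $\half\div_P(Y_i)Y_i(f) + \half Y_iY_i(f) = \half\div_P(X_f^\SS) = \half X^P_\SS(f)$, where $X_f^\SS = \SS(\dd f,\cdot) = Y_i(f)Y_i$. This yields $\L f = X^P_\A(f) + P^{\sharp}(\gamma)(f) + \half X^P_\SS(f)$.

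Next, for the symmetric part and the final inequality, I apply the integration-by-parts identity with $V = X_f^\SS$: this gives $\metric{X^P_\SS f}{h}_P = \int h\,\div_P(Y_i(f)Y_i)\,\dd P = -\int Y_i(f)Y_i(h)\,\dd P$, which is manifestly symmetric in $f,h$ (so $X^P_\SS$ is $L^2(P)$-symmetric), and setting $h=f$ gives $\metric{\tfrac12 X^P_\SS f}{f}_P = -\half\int\sum_i Y_i(f)^2\,\dd P \le 0$. For the antisymmetric terms I show both are $\div_P$-free: $\div_P(X^P_\A) = \curl_P(\curl_P(\A)) = 0$ since $\curl_P\circ\curl_P = 0$ and $\div_P = \curl_P|_{\mathfrak X(\M)}$, while $\div_P(P^{\sharp}(\gamma)) = \curl_P(P^{\sharp}(\gamma)) = P^{\sharp}(\dd\gamma) = 0$ because $\gamma$ is closed. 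Applying the identity to a $\div_P$-free $V$ with $g = fh$ gives $\int(fV(h) + hV(f))\,\dd P = 0$, i.e. $\metric{Vf}{h}_P = -\metric{f}{Vh}_P$, the desired antisymmetry of each of $X^P_\A$ and $P^{\sharp}(\gamma)$.

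Finally, the reversibility statements follow from uniqueness of the operator-level (anti)symmetric splitting. Writing $\L = A + S$ with $A \defn X^P_\A + P^{\sharp}(\gamma)$ antisymmetric and $S \defn \half X^P_\SS$ symmetric, the antisymmetric part $\half(\L - \L^*)$ equals $A$, so $\L$ is symmetric iff $A = 0$ as an operator; since $A$ is a first-order operator (a vector field), this is equivalent to the vanishing of $X^P_\A + P^{\sharp}(\gamma)$, and then $\L = \half X^P_\SS$. The same condition is exactly $\mathfrak J(P) = 0$, because the Fokker--Planck current satisfies $\mathfrak J(P) = \half\div_P(Y_i)Y_i - X = -(X^P_\A + P^{\sharp}(\gamma))$; the detailed-balance and reversibility conclusions are then the definitions recorded in \refsec{sec:rever-dissip}.
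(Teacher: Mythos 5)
Your proposal is correct and takes essentially the same route as the paper's proof in \refsec{sec:proofs-reversibility-sec}: the identical decomposition via the identity $\div_P(X^\SS_f)=\div_P(Y_i)Y_i(f)+Y_iY_if$, antisymmetry of $X^P_\A$ and $P^{\sharp}(\gamma)$ deduced from their $\div_P$-freeness plus integration by parts, and symmetry of $X^P_\SS$ via an adjoint computation, with the reversibility claims read off from the splitting. The only differences are cosmetic streamlining: you prove the key identity intrinsically from the Leibniz rule for $\div_P$ and run everything through a single Stokes-type identity $\int V(g)\,\dd P=-\int g\,\div_P(V)\,\dd P$ (which also yields the Dirichlet-form expression $-\int Y_i(f)Y_i(h)\,\dd P$ giving symmetry and non-positivity simultaneously), whereas the paper uses a local-coordinate computation and cites its earlier Fokker--Planck adjoint lemmas.
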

\noindent We refer the readers to \refsec{sec:proofs-reversibility-sec} for the proof.

\begin{remark}[Carré du champ operator]
The generator $\L$ of any diffusion defines a \emph{carré du champ operator} by
$$ \Gamma(f,h) \defn \half \left( \L(fh)-f \L h - h \L f\right) $$
 over appropriate algebras of functions. These  play an important role  in the study of reversible diffusions (see
 \cite{bakry2013analysis} and references therein). Using \eqref{eq:generator-sym-anti}, we see that the symmetric bracket $\S$ is in fact equivalent to the carré du champ operator $\Gamma$.

\begin{corollary}
For any $f,h \in C^{\infty}(\M)$, the carré du champ operator of a $P$-preserving diffusion is precisely the dissipative bracket generated by the noise
$$ \Gamma(f,h)= \{f,h\}_\S.$$
\end{corollary}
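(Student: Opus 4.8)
The plan is to reduce the identity to a direct computation from the defining formula $\Gamma(f,h) \defn \half\left(\L(fh) - f\L h - h\L f\right)$, inserting the explicit generator $\L f = Xf + \half Y_iY_if$ established in the Lemma of \refsec{sec:general-construction-diffusion}. Since $\Gamma$ depends $\R$-linearly on $\L$, I would split the calculation into the contribution of the first-order drift $X$ and that of the second-order diffusion part $\half Y_iY_i$, and treat each separately. The whole statement is a pointwise algebraic identity on $C^\infty(\M)$, so no integration, compact support, or positivity of $P$ is needed.

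First I would dispose of the drift. Because $X$ is a vector field, it is a derivation and obeys the Leibniz rule $X(fh) = fXh + hXf$; substituting this into the defining formula immediately gives $\half\left(X(fh) - fXh - hXf\right) = 0$. Thus $\Gamma$ is blind to $X$, which is the conceptual reason it should reproduce the \emph{dissipative} (noise) bracket and not the antisymmetric part, in agreement with the symmetric/antisymmetric splitting of the generator in \eqref{eq:generator-sym-anti}. It then remains to evaluate the second-order contribution. Applying the Leibniz rule twice to each summand yields $Y_i(fh) = fY_ih + hY_if$ and hence $Y_iY_i(fh) = fY_iY_ih + hY_iY_if + 2\,Y_i(f)Y_i(h)$. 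Feeding this into $\half\big(\half Y_iY_i(fh) - f\cdot\half Y_iY_ih - h\cdot\half Y_iY_if\big)$, the pure second-derivative terms $fY_iY_ih$ and $hY_iY_if$ cancel against the matching terms arising from $f\L h$ and $h\L f$, so that only the cross term $Y_i(f)Y_i(h)$ (summed over the noise index $i$) survives. By the definitions $\SS \defn Y_i\otimes Y_i$ and $\{f,h\}_\S \defn \SS(\dd f,\dd h) = Y_i(f)Y_i(h)$, this surviving term is precisely $\{f,h\}_\S$, which establishes the corollary.

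There is no substantive obstacle here: the argument is a short double application of the Leibniz rule, and the only genuine care required is bookkeeping — correctly tracking the two Leibniz expansions, the implicit summation over $i$, and the factors of $\half$ that the generator and the definition of $\Gamma$ contribute. The principle being exploited is the standard fact that the carré du champ of a second-order operator sees only its principal symbol, evaluated on $\dd f \otimes \dd h$ and symmetrized; here that principal symbol is exactly the symmetric bracket $\SS$ generated by the noise vector fields $\{Y_i\}$, so the identification $\Gamma(f,h) = \{f,h\}_\S$ is forced by the construction of $\SS$.
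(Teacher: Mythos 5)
Your strategy is exactly the right one, and it is essentially the paper's own: the paper gives no separate computation, deducing the corollary directly from the generator decomposition \eqref{eq:generator-sym-anti}, and the substance in either route is the same two observations you make — first-order terms (the drift, and indeed the whole antisymmetric part $X^P_\A + P^{\sharp}(\gamma)$ together with the first-order piece $\half\div_P(Y_i)Y_i$ of $\half X^P_\SS$) are killed by the Leibniz rule, and the carré du champ sees only the principal symbol built from $\SS = Y_i\otimes Y_i$. However, your final bookkeeping step — the one place you yourself flag as needing care — drops a factor. Inside the bracket you correctly obtain
\begin{equation*}
\half\, Y_iY_i(fh) - f\,\half\, Y_iY_i h - h\,\half\, Y_iY_i f = Y_i(f)\,Y_i(h),
\end{equation*}
but the defining formula still multiplies this by the outer $\half$, so with the paper's stated normalisations ($\L f = Xf + \half Y_iY_if$ and $\Gamma(f,h)\defn\half\left(\L(fh)-f\L h-h\L f\right)$) the honest result of your computation is $\Gamma(f,h)=\half\{f,h\}_\S$, not $\{f,h\}_\S$ as you assert in the last sentence.

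You should know that this slip mirrors an inconsistency in the paper itself: the corollary as printed, $\Gamma(f,h) = \{f,h\}_\S$, is only compatible with the paper's conventions if one either defines the carré du champ without the prefactor, $\Gamma(f,h)\defn \L(fh)-f\L h - h\L f$ (a convention that does appear in parts of the literature), or normalises the generator as $X + Y_iY_i$. As a sanity check against the Bakry--Émery convention the paper cites: for $\L = \half\Delta$ one gets $\Gamma(f,h) = \half\,\nabla f\cdot\nabla h$, which matches the $\half\{f,h\}_\S$ produced by careful arithmetic here. So your method is sound and complete in structure, but as written the final equality does not follow from the preceding lines; either carry the $\half$ through and state $\Gamma = \half\{\cdot,\cdot\}_\S$, or say explicitly which alternative convention for $\Gamma$ (or for $\L$) makes the unweighted identity true.
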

\end{remark}

The $L^2(P)$-symmetry of reversible Markov processes confers them important theoretical properties that are useful for example in the study of their convergence to equilibrium 
\cite{bakry2013analysis,
pavliotis2014stochastic}.
However, they also form a  restrictive class of diffusions that often have slow convergence properties \cite{duncan2017nonreversible,ottobre2016markov}.
The decomposition of the generator above allows us to show that for appropriate transformations $\mathcal{R} : \M \to \M$, the generator of the $\A$-diffusion is \emph{reversible up to $\mathcal{R}$},
which extends the notion of ``reversibility up to momentum flip'' of the Langevin diffusion in Euclidean space, usually associated with improved mixing properties~\cite{fang2014compressible,sohl2014hamiltonian,stoltz2010free}.
\begin{corollary} Let $\mathcal{R}$ be a target-preserving diffeomorphism,  which is an $\A$-antimorphism  and a $\SS$-morphism, that is
$$\mathcal{R}_*\A=-\A , \qquad \mathcal{R}_*\SS= \SS.$$
 Then, the generator of the $\A$-diffusion \eqref{eq:generator-sym-anti} is \Definition{reversible up to $\mathcal{R}$}. That is, we have
$$ \metric{f}{\L h}_P = \metric{\L \mathcal{R}^*f}{\mathcal{R}^*h}_P, \quad \forall f,h \in C_c^{\infty}(\M).$$
\end{corollary}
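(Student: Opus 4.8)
The plan is to verify the defining identity $\metric{f}{\L h}_P = \metric{\L \mathcal{R}^*f}{\mathcal{R}^*h}_P$ by exploiting the symmetric/antisymmetric decomposition of the generator established in \eqref{eq:generator-sym-anti}, namely $\L = X^P_\A + P^{\sharp}(\gamma) + \half X^P_\SS$, together with the $L^2(P)$-symmetry properties of each summand proved in the preceding corollary. The key observation is that conjugation by the pullback $\mathcal R^*$ interacts differently with the antisymmetric and symmetric parts, and the sign flip $\mathcal R_*\A = -\A$ is exactly what is needed to reconcile the antisymmetry of $X^P_\A$ with the reversibility relation. First I would reduce the claim to a statement about the conjugated generator: since $\mathcal R$ is a diffeomorphism and $\mathcal R^*$ is the induced operator on functions, I expect $\metric{\L \mathcal R^* f}{\mathcal R^* h}_P = \metric{\mathcal R_* (\L \mathcal R^* f)}{h}_P$ provided $\mathcal R$ is $P$-preserving, so that $\mathcal R^*$ is an isometry of $L^2(P)$ (this uses $\mathcal R^* P = P$). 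Thus the identity becomes $\metric{f}{\L h}_P = \metric{(\mathcal R_* \circ \L \circ \mathcal R^*) f}{h}_P$, i.e.\ I must show $\mathcal R_* \circ \L \circ \mathcal R^* = \L^\dagger$, where $\L^\dagger$ is the $L^2(P)$-adjoint of $\L$.

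Next I would compute the conjugated generator term by term. The essential algebraic fact is the naturality of the $\bi$-Hamiltonian construction under pushforward: for a bracket $\bi$ and diffeomorphism $\mathcal R$, one has $\mathcal R_* \circ X^{(\cdot)}_\bi \circ \mathcal R^* = X^{(\cdot)}_{\mathcal R_* \bi}$ at the level of the differential operators $X^P_\bi : f \mapsto \div_P(X^\bi_f)$, using that $\mathcal R$ preserves $P$ so that $\div_P$ and the musical structures commute appropriately with $\mathcal R_*$. Applying this with the hypotheses $\mathcal R_* \A = -\A$ and $\mathcal R_* \SS = \SS$ gives $\mathcal R_* \circ X^P_\A \circ \mathcal R^* = -X^P_\A$ and $\mathcal R_* \circ X^P_\SS \circ \mathcal R^* = X^P_\SS$. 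For the harmonic obstruction term, I would argue that $\mathcal R_* \circ P^{\sharp}(\gamma) \circ \mathcal R^* = -P^{\sharp}(\gamma)$, which should follow from treating $P^{\sharp}(\gamma)$ as behaving like the antisymmetric part (it is $L^2(P)$-antisymmetric by the earlier corollary, so its adjoint is its negative, and the conjugation must match this). Combining these, $\mathcal R_* \circ \L \circ \mathcal R^* = -X^P_\A - P^{\sharp}(\gamma) + \half X^P_\SS$, which is exactly $\L^\dagger$ by the antisymmetry of $X^P_\A$ and $P^{\sharp}(\gamma)$ and the symmetry of $\half X^P_\SS$ recorded in the preceding corollary.

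The main obstacle I anticipate is the precise handling of the harmonic/topological term $P^{\sharp}(\gamma)$ under conjugation. The hypotheses $\mathcal R_*\A = -\A$ and $\mathcal R_*\SS = \SS$ control only the bracket data, not the cohomological representative $\gamma$; it is not immediate from the stated assumptions that $\mathcal R_* \circ P^{\sharp}(\gamma) \circ \mathcal R^* = -P^{\sharp}(\gamma)$ holds pointwise rather than merely up to an exact term. I would resolve this by working at the level of the $L^2(P)$-pairing directly: since $P^{\sharp}(\gamma)$ is $L^2(P)$-antisymmetric and $\mathcal R^*$ is an $L^2(P)$-isometry, the required contribution to the pairing can be matched using the antisymmetry relation $\metric{P^{\sharp}(\gamma) f}{h}_P = -\metric{f}{P^{\sharp}(\gamma) h}_P$ without needing the operator identity, provided one only verifies that conjugation by $\mathcal R$ sends $P^{\sharp}(\gamma)$ into a $P$-preserving drift with the correct sign in the pairing. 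If instead the cleanest route requires an operator-level statement, I would add and justify the tacit assumption that $\mathcal R$ fixes the cohomology class, i.e.\ $\mathcal R_* \gamma = -\gamma$ modulo exact forms, which is the natural twisted-form analogue of the antimorphism condition $\mathcal R_* \A = -\A$ and is satisfied in the motivating example of the momentum flip.
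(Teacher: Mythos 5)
On the bracket terms your argument coincides with the paper's own proof. The paper likewise works from the decomposition \eqref{eq:generator-sym-anti}: it establishes the intertwining relations $\mathcal{R}^* X^{P}_\A f = -X^{P}_\A \mathcal{R}^* f$ and $\mathcal{R}^* X^{P}_\SS f = X^{P}_\SS \mathcal{R}^* f$ --- via $\mathcal{R}^*\L_{X_f}P = \L_{(\mathcal{R}^{-1})_* X_f}P$, hence $\div_P(X_f)\circ\mathcal{R} = \div_P\left((\mathcal{R}^{-1})_* X_f\right)$, combined with $(\mathcal{R}^{-1})_* X^\A_f = -X^\A_{f\circ\mathcal{R}}$ --- and then concludes from the $L^2(P)$-isometry of $\mathcal{R}^*$ (using $\mathcal{R}_*P=P$) together with the $L^2(P)$-(anti)symmetry of each part, exactly as you do. Your packaging of this as the operator identity $\mathcal{R}_*\circ\L\circ\mathcal{R}^* = \L^\dagger$, with the naturality fact $\mathcal{R}_*\circ X^{P}_\bi\circ\mathcal{R}^* = X^{P}_{\mathcal{R}_*\bi}$, is the same computation stated adjoint-side; the naturality fact is precisely what the paper verifies.

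Where you diverge is the topological term, and your instinct there is sound: the hypotheses $\mathcal{R}_*\A = -\A$ and $\mathcal{R}_*\SS = \SS$ do not control $P^{\sharp}(\gamma)$, and in fact the paper's appendix proof silently treats only $X^{P}_\A$ and $\half X^{P}_\SS$, so the corollary as stated is proved only when the obstruction contribution is absent or separately assumed to flip --- you correctly flagged an omission in the source. Be aware, though, that your first proposed fix is circular: $L^2(P)$-antisymmetry plus the isometry only yield $\metric{f}{P^{\sharp}(\gamma)h}_P = -\metric{\mathcal{R}^*P^{\sharp}(\gamma)f}{\mathcal{R}^*h}_P$, and converting this to $+\metric{P^{\sharp}(\gamma)\mathcal{R}^*f}{\mathcal{R}^*h}_P$ requires exactly the operator identity $(\mathcal{R}^{-1})_* P^{\sharp}(\gamma) = -P^{\sharp}(\gamma)$ that you hoped to avoid; ``the correct sign in the pairing'' \emph{is} that identity. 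Your fallback hypothesis is the right resolution: since $\mathcal{R}_*P = P$ makes $P^\flat$, and hence $P^{\sharp}$, equivariant under $\mathcal{R}_*$, assuming $\mathcal{R}_*\gamma = -\gamma$ gives $\mathcal{R}_* P^{\sharp}(\gamma) = -P^{\sharp}(\gamma)$ and the argument closes. One caution on weakening this to ``modulo exact forms'': if $\mathcal{R}_*\gamma = -\gamma + \dd\beta$, the drift picks up the term $\curl_P\left(P^{\sharp}(\beta)\right)$, which must be absorbed by re-gauging $\A$, after which the antimorphism condition $\mathcal{R}_*\A = -\A$ must be re-verified for the shifted bracket; the gauge-invariant formulation of the needed extra hypothesis is simply that $\mathcal{R}$ reverses the full conservative drift, $\mathcal{R}_*\left(\curl_P(\A) + P^{\sharp}(\gamma)\right) = -\left(\curl_P(\A) + P^{\sharp}(\gamma)\right)$. (In the motivating momentum-flip example the relevant cohomology of $\R^n\times\R^n$ is trivial, so the condition holds vacuously.)
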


\begin{example} 
For instance, if $\mathcal R_* Y_i = \pm Y_i$, then $ \mathcal{R}_*\SS= \SS$, and this is precisely what happens in the underdamped Langevin diffusion on phase space $(q,p)$ (see below), wherein $Y_i$ are proportional to $\partial_p$, and the momentum-flip $\mathcal R:(q,p) \mapsto (q,-p)$  flips the noise fields $\mathcal R_* Y_i = - Y_i$.
\end{example}

Combining the results from the previous sections,  we have the following interpretations of the various components of measure-preserving diffusions:
\begin{equation}\label{eq:A-diffusion-detailed}
\begin{split}
\diff Z_t = & \underbrace{ \overbrace{ \underbrace{ ( \quad X^{\mu_\M}_{\A}}_{\text{volume-preserving}} +  \underbrace{ X^\A_H}_{\text{density-preserving}})\,\diff t}^{\text{local potential of Fokker--Planck current}}  \quad + \overbrace{ P^{\sharp}(\gamma)\,\diff t}^{\text{topological obstruction}}  }_{\text{conservative } L^2(P)\text{-antisymmetric drift}} \\
& \qquad \qquad + \underbrace{ ( \underbrace{-\half Y_i(H)Y_i }_{\text{density-dissipative drift } }+ \overbrace{ \underbrace{\half  \div_{\mu_\M}(Y_i)Y_i }_{\text{volume-dissipative drift } } )\,\diff t + \underbrace{ Y_i \circ \diff W^i_t}_{\text{Stratonovich noise}}}^{\text{volume-preserving } L^2(\mu_\M)\text{-symmetric diffusion}} }_{L^2(P)\text{-symmetric fluctuation-dissipation balance}}.
\end{split}
\end{equation}
In particular, we note the following:
\begin{enumerate}[ label= \bf(\roman*)]
\item  $X_H^\A$ (or $ -Q\nabla H $) is the $p_\infty$-preserving ($X_H^\A(p_\infty)=0$) $\A$-Hamiltonian vector field generated by an antisymmetric bracket $\A$;
\item $X^{\mu_\M}_\A $ (or $\nabla \cdot Q$) is a generalisation of the modular vector field from Poisson mechanics, which preserves the volume measure $\mu_\M$ and describes how the $\A$-Hamiltonian vector fields $X_f^\A$ distort the reference measure $\mu_\M$, i.e., 
$X^{\mu_\M}_\A =0$ iff  $X_f^\A$  preserves $\mu_\M$ for all $f$. 
\item When $X^{\mu_\M}_\A$ is added to $X_H^\A$, the resulting vector field $X^P_\A$ is $P$-preserving.
On contractible manifolds such as $\R^n$, the Fokker--Planck current of any $P$-preserving diffusion can be written as $X^P_\A$ for some $\A$; otherwise, for (global) completeness, a topological obstruction term $P^{\sharp}(\gamma)$ parametrised by  the $(n-1)^{th}$-twisted de Rham cohomology group $\gamma \in H^{n-1}_{dR}(\M)$ must also be added by theorem \ref{thm:completeness}. 
The resulting generator $X^P_\A+P^{\sharp}(\gamma)$  is antisymmetric in $L^2(P)$. On compact oriented manifolds, this topological contribution can be parametrised by harmonic forms, as discussed in \refsec{sec:A-diffusion-compact}. 
\item $\div_{\mu_\M}(Y_i)Y_i$ (or $\nabla \cdot D$ minus the It\^o-to-Stratonovich correction) represents the distortion of the volume measure along the noise vector fields, and is usually dissipative; \label{vol-distortion-term}
\item  $-Y_i(H )Y_i$ (or $-D\nabla H $) is the rate of change of the target log-density  along the noise fields. \label{target-distortion-term}
\item The overall noise contribution \ref{vol-distortion-term} + \ref{target-distortion-term} is generated by the second-order ``modular'' operator 
$\half X^P_{\S}$, which is symmetric in $L^2(P)$.
\end{enumerate}

%This allows us to easily obtain $P$-preserving diffusions on arbitrary manifolds. 

\section{Complete Recipe of Volume-Free $P$-preserving Diffusions}\label{sec:volume-free-diffusions}

As noted in the introduction, obtaining a complete recipe of $P$-preserving diffusions 
allows practitioners to focus 
on the tuning of the parameters $\{\A, (Y_i)\}$, as well as its numerical implementation. 
Using the geometric formalism, we now discuss a particularly interesting class of parameters  inspired by
two classes of $P$-preserving diffusions that play a particularly central role in many applications, namely, the underdamped  and overdamped Langevin processes, which are used for example to construct MALA and HMC respectively.
 
The overdamped Langevin process on $\R^n$ targets a measure of the from 
$P \propto e^{-H} \dd q$, 
and corresponds to the choices
$Q=0$ and $D =$ arbitrary positive-definite constant matrix in the Euclidean recipe \eqref{eq:intro-complete}:
\begin{equation}\label{eq:overdamped-langevin-Rn}
 \diff Z_t = -D\nabla H+ \sqrt{2D}\diff W_t.
\end{equation}
Since $D$ is a positive-definite contravariant tensor, we may think of it as a Riemannian co-metric. Thus, the drift 
$D\nabla H$  corresponds to a Riemannian gradient flow. On the other hand, the underdamped Langevin process evolves on the phase space $\R^n \times \R^n$ and preserve target measures of the form 
$P = \mu_H \propto e^{-H(q,p)} \dd q \,\dd p$.
Starting from the Euclidean complete recipe, this is obtained by setting
$$ Q \defn -J\defn -\begin{pmatrix} 0 & I \\ -I & 0 \end{pmatrix},
\qquad 
 D \defn 
\begin{pmatrix} 0 & 0 \\ 0 & C \end{pmatrix}  $$
where $J$ is the symplectic matrix and $C\in \R^{n\times n}$  is a positive semi-definite matrix \cite[Sec. 2.2.3]{stoltz2010free}. This gives us the  second-order Langevin process (also called the underamped Langevin process)
\begin{subequations} \label{Langevin-process-SDE}
\begin{align} 
 \dd Q_t &= \nabla_p H(Q_t,P_t) \dd t, \\
 \dd P_t &= -\nabla_q H(Q_t,P_t) \dd t -C \nabla_p H (Q_t,P_t)\dd t + \sqrt{2 C} \dd W_t, 
\end{align}
\end{subequations}
which models the fluctuating dynamics of a Hamiltonian system coupled to a thermostat. These systems have been widely used to construct samplers, as shown in \cite{cheng2017underdamped,dobson2019reversible,
leimkuhler2016efficient,ottobre2016markov,stoltz2010free}, by splitting it into a Hamiltonian and thermostat process.

 An interesting property that they both share is that, while being measure-preserving, they appear to be  fully specified by the target log-density $H$ and the random noise term.
In other words,  the reference measure terms ($\curl_{\mu_\M}(\A)$ and $\div_{\mu_\M}(Y_i)Y_i$) are non-existent in both cases, which greatly simplifies the implementation of such processes. Thus, in this section, we are interested in characterising the sub-class of $\A$-diffusions
\begin{equation}\label{eq:volume-split}
 \diff Z_t = \underbrace{\left(X_H^\A - \half \beta X_H^\SS \right) \diff t}_{\text{density-drift}}+ \underbrace{ \left(  \half\div_{\mu_\M}(\SS)+\curl_{\mu_\M}(\A) \right) \diff t}_{\text{reference-measure-drift}}+ \underbrace{ Y_i \circ \diff W^i_t}_{\text{random noise}},
\end{equation}
for which the %components of the reference-measure drift vanish.
terms involving the reference-measure $\mu_\M$ (we call this the `reference-measure drift' in \eqref{eq:volume-split}) vanish. We refer to this subclass of $\A$-diffusions as {\em volume-free $P$-preserving diffusions}.

While it is unclear how to even approach this problem from the Euclidean recipe/formalism, our geometric formalism provides an immediate characterisation of such processes in the case where the noise-fields $(Y_i)$ are (pointwise) linearly independent. 
Indeed, in this case,  we have
$$ \div_{\mu_\M}(Y_i)Y_i =0 \quad \iff \quad Y_i = \curl_{\mu_\M}(\A_i)+P^{\sharp}(\alpha)$$
and 
$$ \curl_{\mu_\M}(\A) =0 \quad \iff \quad \A = \curl_{\mu_\M}(\mathcal V) +P^{\sharp}(\beta),$$
for some 3-vector field $\mathcal V \in \mathfrak X^3(\M)$, noise bi-vector fields $\A_i \in \mathfrak X^2(\M)$ (these are unrelated to the deterministic bi-vector field $\A$), and  appropriate topological contributions $\alpha \in H^{n-1}_{dR}(\M), \beta \in H^{n-2}_{dR}(\M)$ (see theorem \ref{thrm:Dyn-is-loc-curl}).

Hence, ignoring topological obstructions, volume-free $P$-preserving diffusions are characterised by 
parameters obtained through the rotationnels of higher-order tensors,
$$ \{\A, (Y_i) \} = \{\curl_{\mu_\M} (\mathcal V), \left(\curl_{\mu_\M}(\A_i)\right) \}.$$
In particular, up to topological obstructions, such diffusions take the form  
\begin{equation}\label{eq:vol-free-diffusion}
\diff Z_t =  X_{H }^{\curl_{\mu_\M}(\V)}\diff t  -  \half X_{H}^{\curl_{\mu_\M}(\A_i) \otimes \curl_{\mu_\M}(\A_i)}  \diff t+ \curl_{\mu_\M}(\A_i) \circ \diff W^i_t
\end{equation}
and moreover, yield several geometric guarantees:
first of all, the noise fields $(Y_i)$ are automatically $\mu_\M$-preserving (i.e., $\div_{\mu_\M}(Y_i) = 0$) by construction, 
and secondly, any $\A$-Hamiltonian vector fields $X_f^\A$ are also $\mu_\M$-preserving for any $f \in C^{\infty}(\M)$, since by theorem \ref{lemma:div-curl} we have
$$0 = \curl_{\mu_\M}(\A)(f) = X_\A^{\mu_\M}(f) = \div_{\mu_\M}(X_f^\A).$$
The latter property is crucial in HMC to avoid the appearance of Jacobians in the Metropolis-Hastings  step, which are expensive to compute.

%The latter property is exactly the one used in HMC when the flow of the vector field $X_H^\A$ is numerically integrated via a splitting method that decomposes the components of the target density, ensuring the integrator is volume-preserving and thus that the Metropolis-Hastings  step reduces to a computing  the ratio of  probability densities, with no computationally expensive Jacobian.

This suggests the following high-level strategy to sample using the volume-free $P$-preserving diffusions such that it maintains many of the geometric features that are key to the success of Hamiltonian-based Monte Carlo algorithms.
\begin{enumerate}
\item Begin by considering the complete recipe of $P$-preserving diffusions
$$ \diff Z_t  = \left( X^P_\A + P^{\sharp}(\gamma) +\half \div_{P}(Y_i)Y_i \right)\diff t +  Y_i(Z_t) \circ \diff W^i_t.$$
In general, it is not possible to obtain a computationally tractable expression for the solution to this system, since the solution must possess some symmetry for it to be tractable, whereas the target measure typically do not possess such symmetries. This leads us to the next step.
%This is because there is a fundamental incompatibility between the symmetries  that flows must have in order for it to be tractable, and the fact that the target measure is typically complex with no nice symmetries. 
\item Decompose the target $P$ as $P \propto e^{-H} \mu_\M$, where $\mu_\M$ is an appropriate reference measure, for which tractable $\mu_\M$-preserving flows can be obtained. 
Hence, $\mu_\M$ is usually an invariant measure, such as the Lebesgue or Haar measure, or a simple probability measure, such as a Gaussian measure. The complexity of the target $P$ is then entirely contained within its density  $e^{-H}$.
We thus have the expression
\begin{align*}
 \diff Z_t &= \big(X_{H }^\A - \half  Y_i(H)Y_i\big) \diff t+\big( X^{\mu_\M}_\A + \half  \div_{\mu_\M}(Y_i)Y_i\big) \diff t \\
&\quad + P^{\sharp}(\gamma) \diff t + Y_i \circ \diff W^i_t,
\end{align*}
\item In order to reduce the complexity of the target density $e^{-H}$, we further split it into simpler components, $e^{-H} = \prod_j e^{-H_j}$. 
This is one of the important benefits associated to the lifting procedure used in HMC and the underdamped Langevin process, wherein the target density is complex, but its lift typically decomposes nicely into a \emph{potential} $V(q)$ and \emph{kinetic} $T(p)$ term,  both of which are simpler to handle in the lifted space since $V$ is $p$-independent and $T$ is $q$-independent.
Hence, when such simpler components do not exist, we must lift the process to some  phase space, i.e., vector bundle, over $\M$ where such decompositions exist.
%We obtain 
%\begin{align*}
%\diff Z_t &= \sum_j\big(  X_{H_j }^\A - \half  Y_i(H_j)Y_i\big) \diff t+\big( X^{\mu_\M}_\A+ \half  \div_{\mu_\M}(Y_i)Y_i\big) \diff t \\
%&\quad + P^{\sharp}(\gamma) \diff t + Y_i \circ \diff W^i_t.
%\end{align*}
\item Split the diffusion further into an $L^2(P)$-symmetric process
$$ \diff Z_t^{\SS} \defn  - \half\sum_j  Y_i(H_j)Y_i \,\diff t+ \half  \div_{\mu_\M}(Y_i)Y_i  \,\diff t +Y_i \circ \diff W^i_t,$$
and an $L^2(P)$-antisymmetric, deterministic process 
$$ \diff Z_t^{\A} \defn  \Big( \sum_j  X_{H_j }^\A  + X^{\mu_\M}_\A+  P^{\sharp}(\gamma) \Big) \diff t.$$
Hereafter, we will discard the topological obstruction term $P^{\sharp}(\gamma)$ for simplicity, as they are not necessary for measure-preservation.
\item Restrict the reversible component to volume-free processes, so that we get
$$
\diff Z_t^\SS = -  \half \sum_j  X_{H_j}^{\curl_{\mu_\M}(\A_i) \otimes \curl_{\mu_\M}(\A_i)}  \diff t+ \curl_{\mu_\M}(\A_i) \circ \diff W^i_t, $$
 which  ensures that the noise vector fields $Y_i =\curl_{\mu_\M}(\A_i)$ are volume-preserving, similar to the Langevin system.
%Implementation of such processes are made possible due to its reversibility.
On $\M=\R^n$, this can be implemented with an explicit integrator, as shown in \cite{bou2014metropolis} and on more general manifolds $\M$, the process can be lifted to a vector bundle over $\M$ and choosing the noise-fields to be {\em vertical}, the process will evolve purely on the fibres (which are vector spaces), where we can integrate this explicitly.

\item For the irreversible component,  setting $\A = \curl_{\mu_\M}(\V)$, the deterministic process simplifies to 
$$ \frac{\diff Z_t^{\A}}{\diff t} \defn   \sum_j  X_{H_j }^{\curl_{\mu_\M}(\V)},$$
which can be implemented with a  palindromic splitting integrator
 \cite{mclachlan2002splitting,hairer2006geometric},   that approximates the $P$-preserving flow of $X_H^{\curl_{\mu_\M}(\V)}$ with the composition of the flows of $X_{H_j}^{\curl_{\mu_\M}(\V)}$.
 Importantly, as discussed above, the choice $\A= \curl_{\mu_\M}(\V)$ guarantees that the splitting integrator will be volume-preserving.
Essentially, the splitting method used to compute the irreversible process  simplifies the implementation by decomposing the complicated target $e^{-H}\mu_\M$ into simpler targets $e^{-H_j}\mu_\M$, for which the corresponding flows are easier to construct, while still ensuring that the resulting numerical integrator is $\mu_\M$-preserving.
Here, again, if $H$ is too complex and does not have a sufficiently nice decomposition, it will be necessary to lift the process to an appropriate phase space where such decompositions exist.
In general, it is desirable to choose the  potentials $\{\mathcal V, (\A_i)\}$ that share the symmetries of the reference measure, in order for the tensor fields
$\curl_{\mu_\M}(\V)$ and $\curl_{\mu_\M}(\A_i)$ to inherit these symmetries.
\end{enumerate}

\begin{example}[Shadows]
Choosing $\A$ to be a Poisson structure with invariant measure $\mu_\M$ (such as those discussed in \cite{cruzeiro2018momentum,
weinstein1997modular,dufour2006poisson})
 further ensures that 
the splitting integrator used to integrate 
the irreversible deterministic component
will have a modified energy, called the shadow Hamiltonian, as a result of the Jacobi identity.
In other words, the numerical integrator will itself be a $\A$-Hamiltonian vector field with respect to the shadow Hamiltonian, and this feature is important to the success of HMC 
(see \cite{Leimkuhler2004,Kennedy:2012,sweet2009separable,
izaguirre2004shadow,radivojevic2019modified,
bou2018geometric}).
When the Poisson structure is constructed on a vector bundle over $\M$, such as the cotangent bundle, and the noise fields are chosen to be vertical, we obtain a natural generalisation  of \cite{arnaudon2019irreversible}, where an irreversible HMC algorithm on compact Lie groups was obtained, following the SOL-HMC construction in \cite{ottobre2016function} (see also \refsec{sec:manifold SOL}).

\end{example}

In general, the  Brownian motion $\diff W_t$ (resp. its Riemannian generalisation, discussed below) on $\R^n$ (resp. on a Riemannian manifold)  only preserves the Lebesgue measure $\dd x$ (resp. the Riemannian measure).
By choosing $H = 0$  and therefore $P=\mu_\M$, the diffusion \eqref{eq:vol-free-diffusion} reduces to 
$$ \diff Z_t =  \curl_{P}(\A_i) \circ \diff W^i_t,$$
which may be thought of as the general class of  \Definition{$P$-preserving Brownian motions}, wherein the Euclidean Brownian motion $\diff W^i_t$ is directed along $P$-preserving vector fields $ \curl_{P}(\A_i)$, to obtain a general measure-preserving Brownian motion.

\section{It\^o Diffusions with Riemannian Brownian Noise}\label{sec:examples-A-diff}

We now consider noise processes that are driven by Riemannian Brownian motion, which have powerful ergodic properties.
We say that a diffusion process is \emph{non-degenerate} when its generator is elliptic, and in this case, we can find a 
  Riemannian metric $\MM$ for which the generator takes the form
\begin{equation}\label{eq:generator-non-degenerate}
\L = X + \half \Delta, \quad \text{ for some drift } \quad X \in \mathfrak X(\M)\, ,
\end{equation}
where $\half \Delta$ is the {\em Laplace-Beltrami operator} associated with the metric $\MM$ (see \cite{ikeda2014stochastic,armstrong2016coordinate}).
The diffusion $B_t$ generated by $\half \Delta$ on $\M$ is called the 
\emph{Riemannian Brownian motion}  (see \cite{hsu1988brownian}), and such processes are 
 used for instance in the construction of stochastic gradient descent (SGD).
To get a glimpse of how this noise process is related to the Stratonovich noise discussed earlier, note that when the dissipative bracket $\S \defn Y_i \otimes Y_i \in \mathfrak{X}^2(\M)$ is positive definite, it defines a Riemannian co-metric on $\M$.
Conversely, given a Riemannian metric $\MM$ on $\M$, there always exists a local expansion of the co-metric $\MM^{-1}$ in terms of a
 finite set of vector fields $(Y_i)$, as $\MM^{-1} = Y_i \otimes Y_i$, since $\MM^{-1}$ is positive definite (this is analogous to taking the square-root of a positive definite matrix on $\R^n$).

The following theorem gives a full characterisation of $P$-preserving diffusions generated by \eqref{eq:generator-non-degenerate} (see \refsec{sec:non-degenerate-diffusions} for the proof).
\begin{theorem} Let $\vol$,  $\nabla$ and $\nabla \cdot$ be the Riemannian measure,  gradient and divergence respectively.
Any $P \propto p_\infty \vol$-preserving diffusion generated by \eqref{eq:generator-non-degenerate} takes for some $\A \in \mathfrak X^2(\M)$, the form (up to the usual topological obstruction term)
\begin{align}\label{eq-non-degenerate-general}
\begin{split}
\diff Z_t &=  \underbrace{- X_{\log p_\infty}^\A(Z_t)\diff t- \nabla \cdot \A(Z_t)\diff t}_{\text{irreversible drift}} \\
&\quad + \underbrace{\overbrace{ \half \nabla \log p_\infty(Z_t)\diff t}^{\text{Riemannian gradient flow}} + \overbrace{\diff B_t}^{\text{Riemannian Brownian motion}}}_{\text{reversible Riemannian overdamped Langevin system}}.
\end{split}
\end{align} 
\end{theorem}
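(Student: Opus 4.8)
The plan is to reduce the statement to the global completeness result (Theorem~\ref{thm:completeness}) by translating the elliptic generator \eqref{eq:generator-non-degenerate} into the Stratonovich form to which that recipe applies, and then to rewrite everything in terms of the Riemannian data $(\vol,\nabla,\nabla\cdot)$. First I would exploit non-degeneracy: since $\L=X+\half\Delta$ is elliptic, its second-order symbol is the co-metric $\MM^{-1}$, which (locally, and globally with finitely many fields via a partition of unity) can be written as $\SS=\sum_i Y_i\otimes Y_i=\MM^{-1}$. With this choice of noise fields the diffusion is the Stratonovich SDE $\diff Z_t=X'\,\diff t+Y_i\circ\diff W^i_t$ whose generator is $\L f=X'f+\half Y_iY_if$, for an effective drift $X'$ to be matched.

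The key computation is to relate the second-order part $\half\sum_i Y_iY_i$ to the Laplace--Beltrami operator. A direct coordinate expansion, using that the co-metric components satisfy $\sum_i Y_i^aY_i^b=\MM^{ab}$ together with the definition $\Delta f=\tfrac1{\sqrt g}\partial_a(\sqrt g\,\MM^{ab}\partial_b f)$, gives the identity
\begin{equation*}
\tfrac12\sum_i Y_iY_i=\tfrac12\Delta-\tfrac12\div_{\vol}(Y_i)Y_i .
\end{equation*}
Equivalently, the Riemannian Brownian motion admits the Stratonovich representation $\diff B_t=\half\div_{\vol}(Y_i)Y_i\,\diff t+Y_i\circ\diff W^i_t$, so that $\half\Delta=\half Y_iY_i+\half\div_{\vol}(Y_i)Y_i$. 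Matching $\L=X+\half\Delta$ with the Stratonovich generator forces $X'=X+\half\div_{\vol}(Y_i)Y_i$; peeling the Brownian part off the SDE then isolates the residual drift $X=X'-\half\div_{\vol}(Y_i)Y_i$ and writes the process as $\diff Z_t=X\,\diff t+\diff B_t$.

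Next I would feed $X'$ into the completeness theorem. Theorem~\ref{thm:completeness} gives $X'=\curl_P(\A)+\half\div_P(Y_i)Y_i$ up to the topological obstruction $P^\sharp(\gamma)$, so
\begin{equation*}
X=\curl_P(\A)+\tfrac12\div_P(Y_i)Y_i-\tfrac12\div_{\vol}(Y_i)Y_i .
\end{equation*}
Writing $P=p_\infty\vol$ with $H=-\log p_\infty$, the reference-measure splitting identities of Section~\ref{sec:completeness-bracket-diffusion} give $\div_P(Y_i)Y_i=\div_{\vol}(Y_i)Y_i-Y_i(H)Y_i$ and $\curl_P(\A)=\curl_{\vol}(\A)+X_H^\A$. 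The two divergence terms then collapse to $-\half Y_i(H)Y_i=\half Y_i(\log p_\infty)Y_i$, and since $\sum_i Y_i(f)Y_i=\MM^{-1}\dd f=\nabla f$ when $\SS=\MM^{-1}$, this is exactly $\half\nabla\log p_\infty$. Using Theorem~\ref{lemma:div-curl} together with the coordinate identity $\curl_{\vol}(\A)=X_\A^{\vol}=-\nabla\cdot\A$ (the convention of Theorem~\ref{thm:diffusion-compact}) and $X_H^\A=-X_{\log p_\infty}^\A$, the antisymmetric part becomes $-\nabla\cdot\A-X_{\log p_\infty}^\A$. Collecting terms and discarding $P^\sharp(\gamma)$ yields \eqref{eq-non-degenerate-general}.

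The step I expect to be the main obstacle is the Laplacian identity of the second paragraph: establishing $\half\sum_i Y_iY_i=\half\Delta-\half\div_{\vol}(Y_i)Y_i$ cleanly, which is the Stratonovich-to-Riemannian-Brownian correction underlying the whole argument. Care is also needed that the result is independent of the (non-unique) decomposition $\SS=\sum_i Y_i\otimes Y_i$ of the co-metric, since both $\nabla\log p_\infty$ and $\div_{\vol}(Y_i)Y_i$ must be checked to depend only on $\MM^{-1}$ and not on the chosen frame, and that the local-to-global passage of the recipe is compatible with the globally defined generator $X+\half\Delta$, the term $P^\sharp(\gamma)$ being the only genuinely global contribution, which is suppressed in the stated ``up to the usual topological obstruction'' form.
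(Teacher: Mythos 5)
Your proposal is correct, but it takes a genuinely different route from the paper's proof, which is much shorter and never factorises the co-metric. The paper works directly with the elliptic generator: using the $L^2(\vol)$-symmetry of $\Delta$ it writes the stationary Fokker--Planck equation as $\L^*p_\infty = \nabla\cdot\left(-p_\infty X + \half\nabla p_\infty\right) = 0$, invokes the potential characterisation (as in the proof of theorem \ref{thm:diffusion-compact}) to express the divergence-free current locally as $-p_\infty X + \half\nabla p_\infty = \nabla\cdot(\A p_\infty)$, and finishes with the expansion $\curl_\vol(p_\infty\A) = -p_\infty X_H + p_\infty\curl_\vol(\A)$ --- no noise fields, no Stratonovich conversion. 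You instead reduce to the master completeness theorem \ref{thm:completeness} by choosing $\sum_i Y_i\otimes Y_i = \MM^{-1}$, verifying $\half Y_iY_i = \half\Delta - \half\div_\vol(Y_i)Y_i$, and letting the splitting identities $\div_P(Y_i)Y_i = \div_\vol(Y_i)Y_i - Y_i(H)Y_i$ and $\curl_P(\A) = \curl_\vol(\A) + X_H^\A$ collapse the divergence terms to $\half\nabla\log p_\infty$; all three ingredients are indeed established in the paper (the first is precisely the local computation in \refsec{sec:examples-A-diff} identifying $\half\div_\vol(Y_i)Y_i\,\diff t + Y_i\circ\diff W^i_t$ with $\diff B_t$, the latter two are \eqref{eq:div-P-Y}--\eqref{eq:curl-P-A} together with $\curl_\vol(\A) = -\nabla\cdot\A$), and your sign bookkeeping is consistent with the paper's conventions. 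What your route buys is conceptual: the gradient drift $\half\nabla\log p_\infty$ emerges explicitly as a fluctuation--dissipation cancellation between $\half\div_P(Y_i)Y_i$ and the Brownian correction, and the Stratonovich form of $\diff B_t$ falls out as a by-product. What the paper's route buys is exactly the avoidance of the two technical debts you flag at the end: it needs no global finite factorisation of $\MM^{-1}$ (the paper only asserts a local expansion; globally this requires second countability, e.g.\ an embedding or a dimension-bounded partition of unity) and no frame-independence check, since $\Delta$ is used as given. Both of your residual worries do resolve as you anticipate --- the final drift depends only on $\SS=\MM^{-1}$ after the cancellation, and the local-to-global mismatch is absorbed by the ``up to the topological obstruction $P^{\sharp}(\gamma)$'' caveat in the statement --- but had you argued the paper's way, neither would have arisen.
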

We defer the discussion on the ergodicity of \eqref{eq-non-degenerate-general} on paracompact manifolds in \refsec{ergodicity}.
In the reversible case $\A=0$, 
the diffusion \eqref{eq-non-degenerate-general} gives us precisely the \emph{Riemannian overdamped Langevin equation}, used to construct the Riemann Metropolis-adjusted Langevin algorithm (MALA) when the Riemannian metric is obtained from an information divergence (see \cite{Girolami2011,Livingstone2014}).
On the other hand, the case $\A \neq 0$ is also of interest to us as it is well-known that the existence of an irreversible component can accelerate convergence to the target distribution, as demonstrated in \cite{hwang2005accelerating,duncan2017nonreversible,
rey2015irreversible} (a detailed analysis of the optimal drift for constant $\A$ is provided in \cite{lelievre2013optimal}). 
Alternatively, reversible overdamped Langevin systems ($\A=0$) with appropriate choices  of Riemannian metric $\MM$ can also  lead to accelerated convergence relative to the overdamped Langevin process \eqref{eq:overdamped-langevin-Rn}, as shown in \cite{abdulle2019accelerated}.

\begin{remark}[Reference Measures]
We point out that the Riemannian measure associated to the metric of the generator is sometimes  not an appropriate choice of reference measure.
This has in fact caused considerable confusion in the statistical literature when a target on Euclidean space is expressed in terms of the Riemannian measure instead of the Lebesgue measure 
 \cite{Simon:2013,liu2016stochastic,xifara2014langevin}.
 In that case, denoting by $p_\M$ the density of $P$ with respect to an appropriate reference measure $\mu_\M$ and $p_\infty$ the density associated with the Riemannian measure $\vol$, we can simply use the relation $\log p_\infty = \log p_\M - \log \frac{\dd \vol}{\dd \mu_\M}$ to convert \eqref{eq-non-degenerate-general} into a corresponding expression based on the measure $\mu_\M$.
For example on Euclidean space, it is well-known that the Riemannian Brownian motion $B_t$ can be expressed as
 $\diff B_t = \sqrt{\MM^{-1}}\diff W_t - \half \Gamma_\MM \,\diff t$, where $\Gamma_\MM^i \defn \MM^{jk}\Gamma^i_{jk} = \MM^{ir}\partial_r \log \sqrt{\MM} - \partial_r \MM^{ir}$ and $W_t$ is the standard Euclidean Brownian motion.
Further, if $\mu_\M = \dd x$, then $\dd \vol /\dd \mu_\M = \sqrt{\MM}$ and together with the explicit expression for $\diff B_t$, we can use this to express \eqref{eq-non-degenerate-general} in terms of the local Lebesgue density, as in \cite{xifara2014langevin}. One should note however that  $\mu_\M = \dd x$ is not a meaningful measure on general manifolds and therefore this expression only makes sense on Euclidean space.
\end{remark}

In local charts, one can also recover the Riemannian overdamped Langevin system directly from our $\A$-diffusion \eqref{eq:A-diffusion-detailed} as we show below.
First, let $(Y_i)$ be a family of vector fields on $\M$ such that $\S \defn Y_i \otimes Y_i$ is a positive definite tensor field. We then set $\MM \defn \S^{-1}$, which defines a Riemannian metric tensor. 
Now taking the reference measure $\mu_\M$ to be the Riemannian measure $\vol$ (locally, $\vol =\sqrt{|\MM|} \,\dd x$ where $|\MM|$ denotes the local determinant of $\MM$), the $\A$-diffusion \eqref{eq:A-diffusion-detailed} with $\A = 0$ becomes
\begin{align} \label{a-diffusion-zero}
\diff Z_t  = -\frac{\beta}{2} Y_i(H)Y_i \,\diff t + \half \div_{\vol}(Y_i) Y_i \,\diff t + Y_i \circ \diff W_t^i.
\end{align}
For the first term, we have $Y_i(H)Y_i  = X^{\MM^{-1}}_H = \nabla H$, so it is the Riemannian gradient of $H$. In local coordinates, the second term reads
\begin{align*}
\half \div_{\vol}(Y_i) Y_i^k &= \frac{1}{2\sqrt{|\MM|}}\frac{\partial}{\partial x^j}\left(\sqrt{|\MM|} Y_i^j\right)Y_i^k \\
&= \frac{1}{2\sqrt{|\MM|}}\frac{\partial}{\partial x^j}\left(\sqrt{|\MM|} Y_i^j Y_i^k\right) - \half Y_i^j \frac{\partial}{\partial x^j}Y_i^k,
\end{align*}
and  the random noise terms are related by the Stratonovich-to-It\^o correction
\begin{align*}
\underbrace{ Y_i^k \circ \diff W_t^i}_{\text{Stratonovich noise}} = 
\underbrace{\half Y_i^j \frac{\partial Y_i^k}{\partial x^j} \,\diff t}_{\text{Stratonovich-to-It\^o correction}} + \underbrace{ Y_i^k \,\diff W_t^i}_{\text{It\^o noise}}.
\end{align*}
Hence putting this together, we have
\begin{align*}
\half \div_{\vol}(Y_i) Y_i^k \,\diff t + Y_i^k \circ \diff W_t^i =  \frac{1}{2\sqrt{|\MM|}}\frac{\partial}{\partial x^j}\left(\sqrt{|\MM|} Y_i^j Y_i^k\right) \diff t + Y_i^k \,\diff W_t^i,
\end{align*}
which is precisely the local expression for the Riemannian Brownian motion $\diff B_t$ \cite{hsu2008brief}. Thus, in local charts, \eqref{a-diffusion-zero} becomes
\begin{align}
\diff Z_t = -\frac{\beta}{2} \nabla H\,\diff t + \diff B_t,
\end{align}
which is exactly the Riemannian overdamped Langevin system.
%The following remark discusses the global relation between 
%the Stratonovich diffusion and the Riemannian overdamped Langevin process.

\begin{remark}[Riemannian Brownian Motion from the Orthonormal Frame Bundle]
We show here that the Riemannian Brownian motion can also be obtained globally as a projection of a Stratonovich diffusion defined on the orthonormal frame bundle.
Specifically, if we define the canonical horizontal vector field $L_i$ on the orthonormal frame bundle $\pi_O:O(\M)\to \M$, then the diffusion $\diff O_t = L_i \circ \diff W^i_t$ reduces to the Riemannian Brownian motion   \cite[theorem 4.2]{ikeda2014stochastic}. 
Moreover $L_i(\pi_O^*H)L_i$ is $\pi_O$-related to $\nabla H$, as follows from equations (4.12) and (4.22) in  \cite{ikeda2014stochastic}. Indeed, locally $L_i = e^r_i \partial_{q^r} -\Gamma^{\ell}_{sa}e^s_ie^a_b \partial_{e^{\ell}_b}$ where $(e_s)_s$ is an orthonormal frame, 
so $L_i(\pi_O^*H)L_i = e^r_i \partial_{q^r}(H)e^s_i \partial_{q^s}= \MM^{rs} \partial_{q^r}(H) \partial_{q^s}  $. 
Hence the Riemannian overdamped Langevin process 
$ \diff Z_t = -\frac{\beta}{2} \nabla H \diff t +\diff B_t,$
 is the projection under $\pi_O$ of the Stratonovich diffusions 
$ \diff Q_t = - \frac{\beta}{2} L_i(\pi^*_OH)L_i \diff t + L_i \circ \diff W_t^i$.

\end{remark}

\section{Deterministic Flow of Measure-preserving Diffusions on the Space of Volume Measures}\label{sec:flow on space of measures}

In this section, we describe the rate of change of functionals along the diffusion process.
The rate of change  of a  curve of volume measures $\mu_t$ along  a  diffusion process $Z_t$ is given by the forward Kolmogorov equation, 
$\frac{\partial \mu_t}{\partial t} = \L^* \mu_t$.
In particular, if $Z_t$ is an arbitrary  $P$-preserving diffusion, combining \eqref{eq:Fokker-adjoint-measures} and theorem  \ref{thm:completeness},  we find that the equation
\begin{align*}
\frac{\partial \mu_t}{\partial t} &= \div_{\mu_t}\left( -\curl_P(\A) - P^{\sharp}(\gamma)+\half(\div_{\mu_t}(Y_i)-\div_P(Y_i))Y_i \right) \mu_t
\end{align*}
describes the evolution  over the space of smooth measures of $\mu_t$ towards the stationary distribution $P$. 
Decomposing at each $t$ the target $P$ with respect to the reference measure $\mu_t$, as in \eqref{eq:modular-diffusion}, and using the fact $\curl \circ \curl =0$ we can simplify this expression to  
$$ \frac{\partial \mu_t}{\partial t} =\div_{\mu_t}\left( X^\A_{\log \frac{\dd P}{\dd \mu_t}} -\half Y_i \left(\log \frac{\dd P}{\dd \mu_t}\right) Y_i  \right) \mu_t -  \dd\frac{\dd \mu_t}{\dd P}(\mu^{\sharp}_t(\gamma)) \mu_t.$$
Let us for the moment ignore the topological obstruction for simplicity. Observe that 
$\mu_t$ satisfies the continuity equation 
$$ \frac{\partial \mu_t}{\partial t} +   \div_{\mu_t} \left(  \half X^{\SS  }_{\log \frac{\dd P}{\dd \mu_t}}- X^{\A  }_{\log \frac{\dd P}{\dd \mu_t}}\right)\mu_t =0.$$ 
Hence, the rate of change of KL divergence along the curve $\mu_t$ is
$$  \frac{\dd }{\dd t}\mathrm{KL}(\mu_t \| P) =-  \int \div_P \left( \half X^{\SS  }_{\log \frac{\dd P}{\dd \mu_t}}- X^{\A  }_{\log \frac{\dd P}{\dd \mu_t}} \right ) \mu_t, $$
and if  Stokes' theorem hold \footnote{that is $\int \dd i_{\half X^{\SS  }_{\log \frac{\dd P}{\dd \mu_t}}- X^{\A  }_{\log \frac{\dd P}{\dd \mu_t}}} \mu_t=0$}, we can further write (recall the notation \eqref{eq:bracket notation})
$$  \frac{\dd }{\dd t}\mathrm{KL}(\mu_t \| P) =-
 \half \int  \left\{\log \frac{\dd P}{\dd \mu_t},\log \frac{\dd P}{\dd \mu_t} \right\}_{ \S}  \, \mu_t  .$$
Since $\S$ is a dissipative bracket and $\mu_t$ is a smooth positive measure, this integral is non-negative for all $t$, and it follows that
$$  \frac{\dd }{\dd t}\mathrm{KL}(\mu_t \| P) \leq 0,$$
in concordance with the Euclidean case, see for example \cite{ma2019there}.

More generally, consider a   functional $F$ on the space of smooth measures.
Important families of  such functionals include the linear functionals
$$ F_f ( Q) \defn \metric{f}{Q}_* = \int f Q,$$
for some $f \in C^{\infty}(\M)$, and the functionals
$$ F^{P}_h (Q) \defn \int h\left(  \frac{\dd Q}{\dd P} \right)  Q,$$ 
parametrised by a choice of function $h: \R \to \R$,
which  include the KL divergence and other functionals that arise in a wide range of  applications \cite{khesin2008poisson,lott2006some,
weinstein1983hamiltonian,gangbo2010differential}.
The functional derivatives $\frac{\delta F}{\delta Q}$ with $F = F_f$ and $F_h^P$ are  $f$ and $ h\left(  \frac{\dd Q}{\dd P} \right)$ respectively. 
For any bracket $\bi$, we define the \emph{integral bracket} on the space of measures by  
$$ \{f,h\}_{\int_\bi}(Q) \defn \int \{f,h\}_\bi \, Q,$$
provided the integral converges (for example, $f$ or $h$ is compactly supported).
The following proposition shows that the integral thermodynamic bracket associated to the diffusion characterises the rate of change of $F$ along the process (see \refsec{sec:rate of change appendix} for the proof).
It can be used to optimize the brackets in the measure-preserving diffusion for the given task,
for example to improve the decay of a statistical divergence (see e.g. \cite{nielsen2020elementary}) along  the process to speed-up convergence to equilibrium.

\begin{proposition} Let $F$ be a  functional on the space of volume measures, and suppose $\frac{\delta F}{\delta Q}\in C_c^{\infty}(\M)$ (or  more generally that Stokes' theorem holds).
 The rate of change of $F$ along the $P$-preserving diffusion 
is then given by
$$ \frac{\dd }{ \dd t} F(\mu_t) = \left\{\log \frac{\dd P}{\dd \mu_t}, \frac{\delta F}{\delta \mu_t} \right\}_{\int_{\mathcal T}}(\mu_t) +  \metric{ \frac{\dd \mu_t}{\dd P} }{ \mu^{\sharp}_t(\gamma) \left[\frac{\delta F}{\delta \mu_t}\right]  }_{\mu_t}$$
where $\mathcal T$ is the thermodynamic bracket 
$ \SS/\sqrt 2  -\A$, and $\gamma$ the topological obstruction.
\end{proposition}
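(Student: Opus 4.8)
The plan is to differentiate $F(\mu_t)$ through the forward Kolmogorov equation and then integrate by parts twice: once to assemble the thermodynamic bracket and once to recast the topological obstruction in the stated $L^2(\mu_t)$ pairing. First I would write, using the chain rule and the definition of the functional derivative,
$$\frac{\dd}{\dd t}F(\mu_t) = \metric{\frac{\delta F}{\delta \mu_t}}{\partial_t \mu_t}_* = \int \frac{\delta F}{\delta \mu_t}\,\partial_t \mu_t ,$$
and substitute the expression for $\partial_t\mu_t = \L^* \mu_t$ already derived above, obtained by combining \eqref{eq:Fokker-adjoint-measures} with Theorem \ref{thm:completeness} and decomposing $P$ against the running reference $\mu_t$, namely
$$\partial_t \mu_t = \div_{\mu_t}\!\left( X^\A_{\log \frac{\dd P}{\dd \mu_t}} - \half X^\SS_{\log \frac{\dd P}{\dd \mu_t}}\right)\mu_t \; - \; \dd\tfrac{\dd \mu_t}{\dd P}\big(\mu^{\sharp}_t(\gamma)\big)\,\mu_t ,$$
where I have written $\half Y_i(\log\frac{\dd P}{\dd\mu_t})Y_i = \half X^\SS_{\log\frac{\dd P}{\dd\mu_t}}$.

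For the conservative (bracket) part I would use the integration-by-parts identity $\int g\,\div_{\mu_t}(V)\,\mu_t = -\int V(g)\,\mu_t$, valid for any vector field $V$ whenever Stokes' theorem applies; it follows from Cartan's formula $\div_{\mu_t}(V)\mu_t = \L_V \mu_t = \dd\, i_V \mu_t$ together with $\int \L_V(g\mu_t) = \int \dd\, i_V(g\mu_t) = 0$. Applying it with $g = \frac{\delta F}{\delta \mu_t}$ and $V = X^\A_{\log \frac{\dd P}{\dd \mu_t}} - \half X^\SS_{\log \frac{\dd P}{\dd \mu_t}}$, and using $X^\bi_f(g) = \bi(\dd f,\dd g) = \{f,g\}_\bi$ for both $\bi = \A$ and $\bi = \SS$, the conservative contribution becomes
$$ -\int \left( \{\log \tfrac{\dd P}{\dd \mu_t}, \tfrac{\delta F}{\delta \mu_t}\}_\A - \half \{\log \tfrac{\dd P}{\dd \mu_t}, \tfrac{\delta F}{\delta \mu_t}\}_\SS \right)\mu_t = \int \{\log \tfrac{\dd P}{\dd \mu_t}, \tfrac{\delta F}{\delta \mu_t}\}_{\mathcal T}\,\mu_t, $$
where $\mathcal T$ is the thermodynamic bracket assembling the antisymmetric piece $-\A$ and the symmetric piece of $\SS$, whose normalisation is fixed by the fluctuation--dissipation factor $\half$ (consistent with the KL computation above). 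By the definition of the integral bracket this is exactly $\{\log \frac{\dd P}{\dd \mu_t}, \frac{\delta F}{\delta \mu_t}\}_{\int_{\mathcal T}}(\mu_t)$, the first term of the claim.

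For the topological part I would first observe that $\mu^{\sharp}_t(\gamma)$ is $\mu_t$-divergence-free: since $\gamma$ is closed and $\div_{\mu_t} = \curl_{\mu_t}|_{\mathfrak X(\M)} = \mu_t^{\sharp}\circ\dd\circ\mu_t^{\flat}$ on vector fields, we get $\div_{\mu_t}(\mu^{\sharp}_t(\gamma)) = \mu^{\sharp}_t(\dd \gamma) = 0$. Writing the one-form evaluation as a directional derivative, $\dd \frac{\dd \mu_t}{\dd P}(\mu^{\sharp}_t(\gamma)) = \mu^{\sharp}_t(\gamma)\!\left(\frac{\dd \mu_t}{\dd P}\right)$, this contribution to $\frac{\dd}{\dd t}F$ is $-\int \frac{\delta F}{\delta \mu_t}\,\mu^{\sharp}_t(\gamma)\!\left(\frac{\dd \mu_t}{\dd P}\right)\mu_t$. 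Because $\mu^{\sharp}_t(\gamma)$ preserves $\mu_t$, the skew-symmetry $\int f\,W(g)\,\mu_t = -\int g\,W(f)\,\mu_t$ (for $\div_{\mu_t}(W)=0$) transfers the derivative onto $\frac{\delta F}{\delta \mu_t}$ and flips the sign, giving $\int \frac{\dd \mu_t}{\dd P}\,\mu^{\sharp}_t(\gamma)\!\left(\frac{\delta F}{\delta \mu_t}\right)\mu_t = \metric{\frac{\dd \mu_t}{\dd P}}{\mu^{\sharp}_t(\gamma)\!\left[\frac{\delta F}{\delta \mu_t}\right]}_{\mu_t}$. Summing the two contributions yields the stated formula.

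The main obstacle is the repeated reliance on Stokes' theorem on a general $\M$, which need not be compact or orientable: all boundary and decay terms must vanish, and this is precisely what the hypothesis $\frac{\delta F}{\delta \mu_t}\in C_c^\infty(\M)$ (or its stated generalisation) secures. A secondary point requiring care is that the decomposition $P = e^{-H_t}\mu_t$ with $H_t = -\log \frac{\dd P}{\dd \mu_t}$ is genuinely time-dependent; one must verify that the simplified form of $\partial_t\mu_t$ holds at each fixed $t$, which it does, since at each instant it invokes only the pointwise identities $\curl_{\mu_t} = \mu_t^{\sharp}\circ \dd_{H_t} \circ \mu_t^{\flat}$ and $\curl\circ\curl = 0$ established earlier, and not any differentiation in $t$ of the musical isomorphisms.
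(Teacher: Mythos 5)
Your proposal is correct and follows essentially the same route as the paper's own proof: differentiate under the integral, substitute the continuity-equation form of $\partial_t\mu_t$, integrate by parts once to assemble the integral thermodynamic bracket, and use the skew-symmetry of the $\mu_t$-divergence-free field $\mu_t^{\sharp}(\gamma)$ to transfer the derivative in the obstruction term, exactly as in the appendix (your extra checks — the Cartan-formula justification of the integration by parts, $\div_{\mu_t}(\mu_t^{\sharp}(\gamma))=0$ from $\dd\gamma=0$, and the remark on the time-dependence of the decomposition $P=e^{-H_t}\mu_t$ — are sound refinements the paper leaves implicit). The only discrepancy is the normalisation of the symmetric part of $\mathcal T$, where your coefficient $\half$ on $\SS$ is what the computation actually yields, while the paper's $\SS/\sqrt{2}$ would give $1/\sqrt{2}$ — an inconsistency in the paper's notation rather than a gap in your argument.
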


\section{Underdamped Langevin Diffusions on Manifolds}
\label{sec:manifold SOL}

As discussed in \refsec{sec:volume-free-diffusions}, an important feature common to both the underdamped and overdamped Langevin processes is the fact that they are both measure-preserving despite having no explicit reference measure contribution.
Therein we have derived the complete characterisation of these Langevin-like measure-preserving systems.

In this section, we will introduce another perspective regarding the underdamped Langevin process \eqref{Langevin-process-SDE}, and use this to construct irreversible Langevin-based MCMC samplers on manifolds. The idea is as follows.
Since the underdamped Langevin system evolves on a vector bundle $\pi:\F\to \M$ and we want the noise process to live entirely in the vertical direction, we can ask what are the measures $\mu_\F$ on $\F$ for which all vertical vector fields are $\mu_\F$-preserving, so that \emph{any} choice of vertical noise gives rise to a volume-free measure-preserving diffusion.
 This motivates the notion of a \Definition{Langevin pair},
defined as  a pair $(\A,\mu_{\F})$ such that
\begin{itemize} 
\item  for any function $f$, $X_f^\A$ is $\mu_\F$-preserving
\item  $\mu_{\F}$ is \Definition{horizontal}, i.e., $\L_Y \mu_{\F}=0$ for any vertical vector field $Y$. 
\end{itemize}
The $\A$-diffusion  generated by a Langevin pair $(\A,\mu_{\F})$ recovers the Langevin diffusions \eqref{Langevin-process-SDE} locally for any choice of vertical noise fields $(Y_i)$, with the deterministic Hamiltonian dynamics on $\R^n \times \R^n$ therein replaced by a more general $\A$-Hamiltonian vector field on $\F$.

As we will see below, when $\F = T\M$ is the tangent bundle over a Riemannian manifold,  choosing the noise to be vertical ensures that when $(\A,\mu_{T\M})$ is a Langevin pair and the Hamiltonian corresponds to a simple mechanical system, $H \defn \pi^*V+ \half \| \cdot \|^2$ (here $\| \cdot \|$ is the Riemannian norm), the thermostat process becomes an  OU process on the fibres, for which there is an explicit solution, and furthermore preserves the Gaussian distribution $e^{-\frac12 \MM(v, v)}$ on the fibres defined with respect to the Riemannian metric $\MM$.
An important example of a Langevin pair and target $\mu_H \defn e^{- \pi^*V - \half \| \cdot \|^2}\mu_{T\M}$
arises when the target $P$ on $\M$ is expressed  in terms of the Riemannian measure, $P=e^{-V} \vol$.
In this case, if 
$\omega_\flat^n$ denotes the Riemannian symplectic measure (associated to the symplectic structure $\omega_\flat \defn \flat^* \omega$ with $\flat$ the musical isomorphism), which in local tangent-lifted coordinate reads
$\omega_\flat^n = | \MM | \dd q \dd v$,
 then the pushforward of $\mu_H$ with $\mu_{T\M} = \omega^n_\flat$ is simply the target $\pi_* \mu_H = P$,
 so that any samples generated from a $\mu_H$-preserving process are transported under $\pi$ to samples from $P$.
Denoting by $\mathcal V$ the vertical lift of vector fields on $\M$ (which maps vector on $\M$ to vectors on $T\M$, see proof in \refsec{sec:underdamped Langevin manifolds} for the formal definition), we have the following result.

%The reference measure is the symplectic volume form, which in tangent-lifted coordinates is $\mu_{T\M} = |G| \diff x \wedge \diff v$, where
%$|G|$ denotes the determinant of the metric tensor field $G_{ij}(q) = \metric{\partial_{x^i}}{\partial_{x^j} }_q$.
%In order to decompose the $\A$-diffusion into tractable target-preserving components we will choose our noise fields to be vertical.
%Recall that if $X \in \mathfrak X(\M)$ and  $\mathcal V \in \mathfrak X(T\M )$ is the canonical vector field, $\mathcal V: v_q \mapsto \ver_{v_q}v_q $, then the lift $\mathcal V \circ X \circ \pi \in \mathfrak X(T\M)$ is a vector field which only points along the vertical direction.
 \begin{theorem}\label{thrm:vertical-diffusion}
Suppose $(\A,\mu_{T\M})$ is a Langevin pair. 
If we choose the noise fields to be the vertical fields $Y_i\defn \mathcal V\circ X_i \circ \pi$ for $X_i \in \mathfrak X(\M)$, then the $\A$-diffusion generated by $e^{-H}\mu_{T\M}$ with $H \defn \pi^*V + \half \| \cdot \|^2$, reads
\begin{equation}\label{global-Vertical-SDE}
\diff (q_t,v_t) = \underbrace{X_{H}^\A(q_t,v_t)\diff t}_{\A-\text{Hamiltonian system}}- \underbrace{\half \metric{X_i(q_t)}{v_t}_{q_t}\mathcal V ( X_i(q_t)) \diff t}_\text{vertical kinetic Dissipation} +\underbrace{\mathcal V ( X_i(q_t)) \circ \diff W_t}_\text{vertical noise},
\end{equation}
where $\metric{u}{v}_q \defn \MM_q(u,v)$ is the Riemannian inner product at $q\in\M$, or in tangent-lifted coordinates,
 \begin{equation} \label{Vertical-SDE}
\diff (q_t,v_t) = \Big( X_{H}^\A(q_t,v_t)-\half M(q_t)v_t \Big) \diff t +\sigma(q_t) \circ \diff W_t,
\end{equation} 
where 
$$M(q)v=M(q)_{rj}v^j \partial_{v^r} \defn (\sigma \sigma^{\top}\MM(q))_{rj}v^j\partial_{v^r},$$ 
$$\sigma \defn \sigma_{ji}\partial_{v^j}  \defn (X_i)^j\partial_{v^j},
\quad \text{ and } \quad
\MM_{ij}(q) = \metric{\partial_{x^i}}{\partial_{x^j} }_q.$$
In particular $(\Pi,\omega^{n}_\flat)$ is a Langevin pair, where $\Pi$ is the Poisson bi-vector field associated to $\omega_\flat$.
%Moreover  the  Hamiltonian noise fields  $Y_i \defn X_{U_i\circ \pi}$ for $U_i: \M \to \R$, are vertical, $Y_i =-\mathcal V \circ \nabla U_i \circ \pi$, and
%$Y_i(H) =
%- \diff U_i \circ \partial \pi (X_T)$.

% \eqref{eq:SDE-ham-noise} may be written (globally) as
%\begin{equation*} 
%\diff (q_t,v_t) = \Big( X_{H}(q_t,v_t)-\frac{\beta}{2} \diff U_i\big( \tang \pi (X_T)\big)\mathcal V\big( \nabla U_i (q_t)\big)\Big) \diff t- \mathcal V \big( \nabla U_i (q_t)\big) \circ \diff W_t^i
%\end{equation*} 
 \end{theorem}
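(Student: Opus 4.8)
The plan is to derive the SDE directly from the general $\A$-diffusion \eqref{eq:A-diffusion-detailed} by specializing to the case where $\mu_\M = \mu_{T\M}$ with $(\A,\mu_{T\M})$ a Langevin pair, the noise fields are the prescribed vertical lifts $Y_i = \mathcal V\circ X_i \circ \pi$, and the Hamiltonian is the simple mechanical energy $H = \pi^*V + \half\|\cdot\|^2$. The two defining properties of a Langevin pair are exactly what is needed to kill the reference-measure drift: horizontality gives $\div_{\mu_{T\M}}(Y_i) = 0$ for every vertical $Y_i$ (since $\L_{Y_i}\mu_{T\M}=0$), so the volume-dissipative term $\half\div_{\mu_{T\M}}(Y_i)Y_i$ in \eqref{eq:A-diffusion-detailed} vanishes; and the modular-freeness $X^{\mu_{T\M}}_\A = 0$ kills $\curl_{\mu_{T\M}}(\A)$ by Theorem~\ref{lemma:div-curl}. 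Hence the diffusion collapses to $\diff Z_t = X_H^\A\,\diff t - \half Y_i(H)Y_i\,\diff t + Y_i\circ\diff W^i_t$, matching the structure of \eqref{eq:vol-free-diffusion}.

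The next step is to compute the density-dissipative term $-\half Y_i(H)Y_i$ explicitly. Since $Y_i$ is vertical and $\pi^*V$ is constant along fibres, $Y_i(\pi^*V)=0$, so only the kinetic part $\half\|\cdot\|^2$ contributes. First I would show that the vertical lift satisfies $\mathcal V(X_i)\big(\half\|\cdot\|^2\big)(q,v) = \metric{X_i(q)}{v}_q$, which is the standard fact that the vertical derivative of the fibrewise quadratic form $v\mapsto\half\MM_q(v,v)$ in the direction $X_i(q)$ returns the metric pairing $\MM_q(X_i(q),v)$. Substituting this gives the term $-\half\metric{X_i(q_t)}{v_t}_{q_t}\,\mathcal V(X_i(q_t))$ in \eqref{global-Vertical-SDE}. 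Converting to tangent-lifted coordinates $(q,v)$, where $\mathcal V(X_i) = (X_i)^j\partial_{v^j} = \sigma_{ji}\partial_{v^j}$, the dissipation term becomes $-\half\,\MM_{jk}(q)(X_i)^k v^j\,(X_i)^r\partial_{v^r} = -\half(\sigma\sigma^\top\MM)_{rj}v^j\partial_{v^r} = -\half M(q)v$, yielding \eqref{Vertical-SDE}. The noise $Y_i\circ\diff W^i_t$ becomes $\sigma(q)\circ\diff W_t$ by the same coordinate identification.

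Finally, for the concluding claim that $(\Pi,\omega^n_\flat)$ is a Langevin pair with $\Pi$ the Poisson bi-vector of $\omega_\flat$, I would verify the two conditions. Horizontality of $\omega^n_\flat$ follows because in tangent-lifted coordinates $\omega^n_\flat = |\MM|\,\dd q\,\dd v$ has a density depending only on the base point $q$, so any purely vertical field $Y = Y^j\partial_{v^j}$ has $\div_{\omega^n_\flat}(Y) = \frac{1}{|\MM|}\partial_{v^j}(|\MM|Y^j) = \partial_{v^j}Y^j$, which vanishes for the constant-in-$v$ lifts $\mathcal V(X_i)$; more invariantly, $\L_Y\omega^n_\flat = 0$ for vertical $Y$. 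The modular condition $X^{\omega^n_\flat}_\Pi = 0$ is the statement that $\omega^n_\flat$ is an invariant measure for every Hamiltonian flow of $\Pi$, which holds because $\omega^n_\flat$ is (a constant multiple of) the Liouville volume $\omega_\flat^n/n!$ of the symplectic form $\omega_\flat$, and Liouville's theorem guarantees $\L_{X_f^\Pi}\omega_\flat^n = 0$ for all $f$.

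The step I expect to be the main obstacle is the coordinate computation of the dissipation term: one must keep careful track of how the vertical lift, the fibrewise metric pairing, and the identification $\sigma_{ji}=(X_i)^j$ interact, and in particular confirm that the combination $\metric{X_i(q)}{v}_q\,\mathcal V(X_i(q))$ assembles precisely into $(\sigma\sigma^\top\MM)_{rj}v^j\partial_{v^r}$ rather than some transpose or index-mismatched variant. Establishing the intrinsic vertical-derivative identity $\mathcal V(X)\big(\half\|\cdot\|^2\big)=\metric{X\circ\pi}{\cdot}$ cleanly, so that it holds globally and not merely in a single chart, is the conceptual crux; once it is in hand, the remainder is a direct substitution into the already-established volume-free form of the $\A$-diffusion.
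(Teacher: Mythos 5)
Your proposal is correct and follows essentially the same route as the paper's proof: specialise to the volume-free form via the two Langevin-pair properties, compute $Y_i(H)=\metric{X_i(q)}{v}_q$ as the vertical derivative of the fibrewise kinetic quadratic form, assemble the coordinate expression $(\sigma\sigma^{\top}\MM)_{rj}v^j\partial_{v^r}$ (your index bookkeeping matches the paper's, using the symmetry of $\MM$), and verify horizontality of $\omega^n_\flat$ in tangent-lifted coordinates where $\omega^n_\flat = |\MM|\,\diff q\,\diff v$. Your only addition is to spell out the modular condition for $(\Pi,\omega^n_\flat)$ via Liouville's theorem, which the paper leaves implicit.
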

 
 When $\F$ is isomorphic to $T\M$, for example $\F=T^*\M$, then we can use the isomorphism to rewrite \eqref{global-Vertical-SDE}  over $\F$.
  In particular when $\M= \R^n$ and $T(q,v) = \half v^{\top} \MM  v$, using the musical isomorphism, the SDE \eqref{Vertical-SDE} becomes the Langevin dynamics on phase space, as seen in \eqref{Langevin-process-SDE} (with $\sqrt C \defn \MM  \sigma$, $ C \defn \MM  \sigma \sigma^{\top} \MM $)
 $$ \diff Q_t = \MM ^{-1} P_t \diff t, \qquad \diff P_t =  - \nabla V(Q_t)\diff t - \half C(Q_t) \MM ^{-1}(Q_t) P_t \diff t +\sqrt{C(Q_t)} \diff W_t,$$
where now, the definition $M\defn \sigma \sigma^{\top} \MM   $ in theorem \ref{thrm:vertical-diffusion} plays the role of the ``fluctuation-dissipation relation'' which ensures that the target $\propto e^{-H}  \diff q \diff p$ is preserved.
We thus see that, as claimed, \eqref{global-Vertical-SDE} is the manifold generalisation of the usual Langevin SDE, where the noise vector fields 
$X_i$ represent the columns of the ``vertical matrix'' $\sigma$, which only introduces randomness along the fibres (velocity), and
$\metric{X_i(q_t)}{v_t}_{q_t}$ describes the rate of change of the kinetic energy along the noise.
On $\F = \R^n \times \R^n$,  another example of a Langevin pair 
consists in choosing $\A$ to be a constant antisymmetric matrix (which is Poisson but not necessarily symplectic), and $\mu_\F$ the Lebesgue measure $\dd q \dd p$. 
It would be interesting to analyse the optimal properties of a subclass of these Langevin processes as proposed in~\cite{lelievre2013optimal}.

We can now  proceed to build various irreversible, Langevin-based MCMC schemes (which we abbreviate as iLMCMC) in a similar fashion to  \cite{horowitz1991generalized,
ottobre2016function,dobson2019reversible} 

\subsection{iLMCMC Algorithm}\label{iLMCMC}
Using vertical noise fields, we see that \eqref{global-Vertical-SDE} naturally decomposes into a Hamiltonian part and a vertical part which remains within the initial fibre, and thus only shifts the velocity (i.e., replaces the HMC heat bath).
We split \eqref{global-Vertical-SDE} into an $\A$-Hamiltonian part on $T\M$ 
\begin{equation}\label{Ham-iLMCMC}
 \frac{\diff z}{\diff t}= X_H^\A(z),
\end{equation} 
and an OU process within the tangent fibres $T_q\M$ (since the vertical lift is an isomorphism $T_v(T_q\M) \cong T_q \M$)
$$\dot q =0, \qquad \diff v_t = - \frac{\beta}{2} \metric{X_i(q)}{v_t}_{q}X_i(q) \diff t + X_i(q) \circ \diff W_t,$$
 which locally has the form\footnote{with $T_q \M\cong \R^n$ using the local basis, $M(q) \defn \sigma(q) \sigma^{\top}(q)\MM(q)$ where $\sigma_{ji}(q) \defn (X_i(q))^j$.}
\begin{equation}\label{OU-iLMCMC}
\dot q=0, \qquad \diff v_t = -\frac{\beta}{2}M(q)v_t  \diff t +\sigma(q) \circ \diff W_t,
\end{equation}
 and preserves the Gaussian $\propto e^{-\half  \metric vv_q} \diff v = \mathcal N(0, \MM^{-1}(q))$.
Thus, by choosing vertical noise fields $Y_i$ and a Langevin pair $(\A, \mu_\F)$ in the general $\A$-diffusion,  
we obtain a diffusion which splits naturally into an $\A$-Hamiltonian vector field
and a tractable OU-process in the fibres, as with the Euclidean case. 

Below, we consider the  bracket $\A= \Pi$ associated with the Riemannian symplectic structure, that is
$X^\Pi_f$ is the  Hamiltonian vector field of $f$ with respect to $\omega_\flat$,   and its geodesic integrators, and then consider the special case when $\M$ is a Lie group, where we recover a modified version of the algorithm presented in \cite{arnaudon2019irreversible} that is cheaper to compute.

\subsection{iLMCMC with Geodesic Integrators on Embedded Manifolds}

Suppose that $\iota:\M \hookrightarrow \R^k $ is an embedded manifold, equipped with a Riemannian metric that is defined by restricting the Euclidean metric to $\M$. 
We assume that we have (1) a tractable expression for the geodesic flow $\Phi^T$ of the kinetic energy $T \defn \half \| \cdot \|^2$, (2) a $C^1$-extension $W$ of the potential energy $V$ in the coordinates of the embedding, and (3) that the Riemannian metric corresponds to that used to define the reference measure of $P$ - otherwise we need to add a Radon--Nykodym term in our Hamiltonian.
If the geodesic flow is computationally intractable, convenient alternatives include the Riemannian integrators \cite{Leimkuhler:1996}, 
or RATTLE with reversibility check \cite{lelievre2019hybrid,leimkuhler2016efficient}.
Then we can apply a geodesic integrator which approximates \eqref{Ham-iLMCMC} by a composition of geodesic flow  $\Phi^T$,  and vertical gradient flow
generated by $X_V^{\Pi}$, whose integral curve starting from $(q_0,v_0) \in T\R^k|_\M$  reads
 $$q(t) = q_0 \qquad v(t) =  v_0- t \,\hor \left( \nabla_{q_0} W \right),$$
where $ \nabla_{q_0} W $ is the Euclidean gradient and $\hor$ is the orthogonal projection onto the tangent space of $\M$.
Similarly, to implement \eqref{OU-iLMCMC},
we need the  noise vector fields $X_i$ to be expressed in the coordinates of the embedding: 
that is, each noise field $X_i$ is given by a vector field $ b_i=(b^1_i,\ldots,b^k_i)$ on $T\R^k|_\M$ (i.e., $\partial \iota \circ X_i =  b_i \circ \iota$).
Then $ b_i$ defines the $i^{th}$ column of the matrix $\sigma$ and the process \eqref{OU-iLMCMC} is $\partial_q\iota$-related to the following process  on $\R^k$  (``$\cdot$'' is the dot product)
$$  \diff y_t = - \frac{\beta}{2}  b_i(q) \cdot y_t  b_i(q) \diff t + \sigma(q) \diff W_t.$$

Note that our potential energy does not include a $\log | \MM |$ term, unlike the geodesic MCMC of \cite{liu2016stochastic}. 
Indeed, as we discussed in the previous chapter, this terms does not give rise to a meaningful potential energy on manifolds.
In the very special case in which $\M=\R^k$ and we have  fixed a coordinate system (so $\R^k$ is no longer a manifold), then we can add the ``correction term''  $\log  |\M |$ to the potential energy in order to ensure that the algorithm generates samples from $e^{-V}\diff q$ rather than $e^{-V} \vol$, since typically, distributions on the Euclidean space are expressed in terms of the Lebesgue measure (see also \cite{holbrook2018note}).

\subsection{iLMCMC on Lie Groups}

Suppose now that the configuration space $\M=\G$ is a Lie group equipped with a left-invariant metric, $(\theta^i)$ is a set of Maurer--Cartan 1-forms,
and $\F$ is
 the symplectic manifold $\G\times \g$ \cite{barp2019hamiltonianB}.
The identity element of $\G$ will be denoted by $1$.

Let $v^i:\g \to \R$ defined by $v^i(\xi)= \theta^i_1(\xi)$ be the coordinates on $\g$, associated to the basis $(\xi_i)$ of the Lie algebra $\g$ dual to the Maurer--Cartan 1-forms (i.e., $\theta^i_1(\xi_j) = \delta^i_j$).
Since $T(\G \times \g)= T\G \oplus T\g$, the vector fields on $\G\times \g$ can be expanded as $X= a^i e_i + b^i \partial_{v^i}$, where $\partial_{v^i }\in \Gamma(T\g)$ and $e_i$ is the left-invariant vector field dual to $\theta^i$.
In \cite{arnaudon2019irreversible}, where the authors first derive an irreversible MCMC algorithm on Lie groups,
the  Hamiltonian fields were chosen to be of the form $Y_i \defn X_{U_i \circ \pi}$ for some noise potentials $U_i: \G \to \R$. 
Here, we will instead choose the noise fields to be $Y_i = \partial_{v^i }$ to make the computation of the OU process cheaper as we shall see (we could also have $Y_i = f_i(g) \partial_{v^i }$).
It follows from theorem \ref{thrm:vertical-diffusion} that $\div_{\omega^n}{Y_i} =0$ since $\partial_{v^i} = \mathcal V(\xi_i)$.\footnote{In fact we can check this directly without relying on local coordinates: writing the left Haar measure as $\Theta \defn \theta^1\wedge \cdots \wedge \theta^n$, we have
 $$ \mathcal L_{\partial_{v^i }}\omega^n= \big(\mathcal L_{\partial_{v^i }} \diff v \big) \wedge \pi^*  \Theta + \diff v \wedge \big( \mathcal L_{\partial_{v^i }} \pi^*\Theta \big). $$ 
 Now $\gen _{\partial_{v^i }} \diff v^j  = \diff \gen_{\partial_{v^i }} v^j= \diff i_{\partial_{v^i }} \diff v^j = \diff \delta_{ji}=0$.
 Then we have $\gen _{\partial_{v^i }} \pi^*\Theta=0$, since the flow $\Phi$ of $\partial_{v^i }$ is only non-trivial in the vertical direction.
Indeed its flow is $\Phi_t(g,v)=(g,v^1,\ldots, v^i +t,\ldots ,v^n)$ so
$$\big(\gen_{\partial_{v^i }} \pi^*\Theta \big)(g,v)= \frac{\diff}{ \diff t}\big|_{t=0} (\Phi^*_t(g,v) \pi^* \Theta)= \frac{\diff}{ \diff t}\big|_{t=0} \Big( \big(\pi \circ\Phi_t(g,v)\big)^* \Theta\Big)=0,$$ since $\pi \circ\Phi_t(g,v) = g$ is independent of $t$.}
Then, taking the kinetic energy $T \defn \half \MM_{ij} \theta^i_1 \otimes \theta^j_1$ on $\g$ associated to the  left-invariant metric with matrix $\MM$, \eqref{global-Vertical-SDE} 
becomes 
\begin{align}\label{OU-G}
\underbrace{\dot{q} =  v^j_t e_j(q_t)}_{\text{Reconstruction}}, \quad \diff v_t = \underbrace{ \ad^{\top}_{v_t}v_t \diff t}_{\text{Euler-Arnold}} \underbrace{- \MM^{jk}e_j(V)(q_t) \xi_k \diff t}_{\text{Potential flow}}  \underbrace{-\frac{\beta}{2} v_t \diff t + \diff W_t}_{\text{OU}},
\end{align}
where $(q_t, v_t) \in \G \times \g$. 
The Euler-Arnold term describes the geodesic motion of a Riemannian metric with symmetries (in this case left invariance), and  vanishes if the inner product on $\g$ is $\ad$-invariant \cite{holm1998euler,modin2010geodesics}.
In particular on SU$(3)$, the diffusion splits into the 
transition steps  used in the Hybrid Monte Carlo simulation for lattice QCD, with the OU process replacing the momentum heat bath.
In Euclidean space, we have $e_j = \xi_j = \partial_{x^j}$, and we recover the second order Langevin equation \cite{ottobre2016function,dobson2019reversible}.

It is particularly nice that the Ornstein-Uhlenbeck process $\diff v_t = -\half v_t \diff t + \diff W_t$ on $\g$ has an explicit solution given by
\begin{align}
v_{t+h} = e^{-\half h} v_t + \int^{t+h}_t e^{-\half (t+h-s)} \diff W_s,
\end{align}
with transition probability
\begin{align}
p(v_0, v) = \sqrt{\frac{1}{(2\pi)^{n}(1-e^{- h})}} \exp\left(-\frac{1}{2(1-e^{-\beta h})} \left\|v - e^{-\frac{1}{2}h} v_0\right\|^2 \right).
\end{align}
Hence, given an initial sample $(g_0,v_0)$, we obtain an irreversible MCMC algorithm on Lie groups by implementing the following steps:
\begin{enumerate}
	\item Solve the OU process exactly until time $h$ by sampling
	  \begin{align}
	    v^* \simeq \mathcal N \left(e^{-\half  h} v_0, (1-e^{- h}) \Id \right)\, , 
	  \end{align}
	  to obtain $(\bar{g}_0,\bar{v}_0) = (g_0, v^*)$;
	\item Solve the first-order Euler--Arnold equation, and approximate the Hamiltonian system  using $N$ leapfrog trajectories with step size $\dt >0$. 
	For example, for a matrix Lie groups with bi-invariant Riemannian metric, starting at $(\bar{g}_0,\bar{v}_0) = (g_0,v^*)$, we iterate \cite{barp2019hamiltonian}\\\\\
{\bf For} $k = 0,\ldots, N-1$: \footnote{For a non-matrix group, simply replace $\Tr\left(\partial_x V^T \bar{g}_k \xi_i\right)$ with $e_i|_g(V)$ \cite{barp2020bracket}.}
	      \begin{align*}
  		\bar{v}_{k+\half}&= \bar{v}_k -\frac{\dt}{2}\Tr\left(\partial_x V^T \bar{g}_k \xi_i\right) \xi_i \\
  		\bar{g}_{k+1} &= \bar{g}_k \exp\left(\dt \,\bar{v}_{k+\half} \right) \\
  		\bar{v}_{k+1} &= \bar{v}_{k+\half} -\frac{\dt}{2} \Tr \left(\partial_x V^T \bar{g}_{k+1} \xi_i\right) \xi_i
	      \end{align*}
		to obtain $(\bar{g}_N,\bar{v}_N)$. 
	\item Accept or reject the proposal by a Metropolis-Hastings step (although we note that implementing more advanced correction steps that take into account the whole trajectory is desirable \cite{betancourt2017conceptual}). We accept the proposal $(\bar{g}_N,\bar{v}_N)$ with probability
\begin{align*}
\alpha = \min \left\{1,\exp \left(-H(\bar{g}_N,\bar{v}_N) + H(\bar{g}_0,\bar{v}_0)\right)\right\}\, ,
\end{align*}
and set $(g_1,v_1) = (\bar{g}_N,\bar{v}_N)$. On the other hand, if the proposal is rejected, we set $(g_1,v_1) = (\bar{g}_0, -\bar{v}_0)$.
\end{enumerate}

Compared to the algorithm  presented in \cite{arnaudon2019irreversible}, our choice of noise field $Y_i \equiv \partial_{v_i}$ as opposed to $Y_i \defn X_{U_i \circ \pi}$ avoids having to compute matrix exponentials in the first step of the algorithm (which appears in the latter situation in the form $e^{-\half Dh}$, where $D$ is a matrix given by $D_i^j = Y_i^k Y_k^j$), thus significantly reducing the computational cost. 
On the other hand, choosing noise fields of the form $Y_i = X_{U_i \circ \pi}$ may be useful when the potential energies $U_i$ are adapted to the target distribution, for example by increasing the contribution of the noise at appropriate locations.

\section{Ergodicity of $\A$-Diffusions}\label{ergodicity}

Ergodicity plays an important role in many of the applications in which measure-preserving diffusions are employed, so here we discuss the conditions that the drift and diffusion vector fields $(X,Y_i)$ must satisfy in order to ensure unique ergodicity of the diffusion $\diff Z_t = X \,\diff t + Y_i \circ \diff W_t^i$. This will also allow us to clarify the conditions necessary for ergodicity in the Euclidean recipe \eqref{eq:intro-complete}.

First, let us denote by $Z_t^x$ the solution to the SDE $\diff Z_t = X \,\diff t + Y_i \circ \diff W_t^i$ with initial condition $Z_0 = x$.
In general, it is well-known that if the diffusion satifies:
\begin{enumerate}[ label= \bf(\roman*)]
\item the {\em strong Feller property}, that is if the Markov semigroup $T_t$ associated with the process $Z_t$\footnote{The Markov semigroup is defined as $T_t f(x) \defn \int_\M f(y) P_t(x,\dd y)$, where $P_t(x,A) \defn \mathbb{P}(Z_t^x \in U)$ for all $A \in \bi(\M)$.}  maps all bounded measurable functions into continuous functions, and \label{strong-feller}
\item {\em irreducibility}, that is, $P_t(x,U) \defn \mathbb{P}(Z_t^x \in U) > 0$ holds for any open set $U \subset \M$, any point $x \in \M$ and any $t > 0$, \label{irreducibility}
\end{enumerate}
 then it must be ergodic (see for example \cite{khas1960ergodic, glatt2014notes}).
By construction, $\A$-diffusions have a (strictly) positive invariant measure, and thus must be irreducible.
On the other hand, a sufficient condition to ensure that the strong Feller property \ref{strong-feller} holds, is   for instance that the vector fields $(X,Y_i)$ satisfy the  {\em H\"ormander condition}, that is, if the Lie algebra generated by $\{Y_i,[X,Y_i]: i=1,\ldots, N \}$ span the tangent spaces at every point (see \cite{arnaudon2010differentiation, glatt2014notes}), although this is a slightly stronger requirement than necessary when the vector fields are non-analytic (the condition is allowed to fail on appropriate hypersurfaces) \cite{bell2004stochastic,bell1995extension,
bismut1981martingales,cattiaux2002hypoelliptic,hormander1967hypoelliptic}. 

When the generator \eqref{eq:generator-sym-anti} is \emph{strongly elliptic},
that is, the Lie algebra generated by $(Y_i)$ spans the tangent spaces $T_x\M$ at every $x \in \M$, then
whenever a stationary measure exists, it must be unique on any paracompact, connected and orientable manifold $\M$, as shown in \cite[Proposition 6.1]{ichihara1974classification}.
In particular,  on compact manifolds $\M$, it is sufficient that the diffusion is {\em non-degenerate}, meaning that its generator is only elliptic (see \cite{elworthy1988geometric}). 
In Euclidean space, strong ellipticity means that there exist $C>0$ such that $\SS(\alpha,\alpha) \geq C \| \alpha \|^2$ for any covector $\alpha \in T_x^* \R^n$, where
 $\S = Y_i \otimes Y_i$. Thus, contrary to the claim made in the Euclidean recipe \cite[Theorem 1]{ma2015complete},
  it is not sufficient that $\SS$ is positive definite  (i.e., that the generator is elliptic) to ensure uniqueness of the target measure \cite{ichihara1974classification}. 

Unfortunately, many diffusions of  interest are not elliptic (for example those with vertical noise), so it would be interesting to check in future works what precise conditions on $\A$ and $Y_i$ are required for H\"ormander condition to hold in our $\A$-diffusion \eqref{eq:dissipative-form-A-diffusion-2}, given that $X= \curl_P(\A) +\half \curl_P(Y_i)Y_i$.
In particular, the subclass of Langevin-like  volume-free processes studied in \refsec{sec:volume-free-diffusions} is entirely made of rotationnels:
\begin{align}\label{eq:dissipative-form-A-diffusion-2}
\begin{split}
 \diff Z_t &= \left(X_H^{\curl_{\mu_\M}(\mathcal V)} - \half \beta X_H^{\curl_{\mu_\M}(\A_i) \otimes \curl_{\mu_\M}(\A_i)} \right) \diff t+ Y_i \circ \diff W^i_t \\
 &= -\curl_{\mu_\M}(i_{\dd H}\mathcal V) +\half \beta \curl_{\mu_\M}(i_{\dd H}\mathcal \A_i) \curl_{\mu_\M}(\mathcal \A_i)+  \curl_{\mu_\M}(\mathcal \A_i) \circ \diff W^i_t.
\end{split}
\end{align}
This insight may  be useful in tackling the ergodicity problem in the non-elliptic case, especially when combined with the fact $\curl_P$ has natural properties with respect to Lie brackets since it is a derivation of the Schouten--Nijenhuis bracket, a multi-vector generalisation of the Lie bracket \cite{koszul1985crochet}.

We should also note that the dynamical ergodicity of the underlying stochastic process is not sufficient for building algorithms that are robust enough for practical use -- it only provides an asymptotic guarantee for the behaviour of empirical averages of exact realizations of the stochastic process. The condition says nothing about the non-asymptotic behaviour of exact realizations nor any behaviour of the numerical discretizations to which we are limited to in practice. 
For example, MALA is obtained by applying an explicit Euler-Maruyama scheme to the overdamped Langevin process \eqref{eq:overdamped-langevin-Rn}, and composing it with an accept-reject step. However, it is well-known that the algorithm does not maintain the ergodicity properties of the continuous process  that it is derived from (see \cite{roberts1996exponential} for example).

To guarantee that the algorithm will be useful in practice, we need to bound the convergence of the discretized stochastic process towards its asymptotic limit, if one exists.  Recent progress has been made in understanding the converge of both exact and discretized Langevin diffusions in Wasserstein distances \cite{durmus2019analysis,cheng2018sharp,
raginsky2017non,eberle2019couplings}, although these results are limited to sufficiently nice target distributions that limit their practical utility. 

Classic statistical results do not consider the convergence of diffusions themselves, but rather the Metropolis-Hastings transitions that use the discretized diffusions as a proposal distribution.  Using coupling techniques, they demonstrate when the convergence admits geometric bounds in the total variation distance. Although these bounds are not particularly tight, they ensure the existence of central limit theorems which then allow for the convergence to be estimated well empirically.

When moving beyond diffusions to more general second-order Markov processes the problem becomes even harder.  The limited theoretical results \cite{durmus2017convergence,livingstone2016geometric} focus largely on necessary conditions for geometric bounds in the total variation distance and hence the existence of central limit theorems.  Although these conditions are not sufficient to guarantee any particular non-asymptotic behavior, they motivate empirical diagnostics that help practitioners identify target distributions beyond the scope of the algorithm.

\section{A Brief History of Measure-Preserving Diffusions}\label{sec:history-P-diffusions}

The history of measure-preserving processes is a long one, and in this section, we only aim to provide a handful of previous works that are directly related to this one. 
While in the machine learning community, the characterisation of measure-preserving diffusions on $\R^n$ was popularised in the recent NeurIPS article  
\cite{ma2015complete}, 
anterior closely related results can be found in the SDE literature.
For example in 1977, Robert Graham discusses the covariance of the Fokker--Planck equation, in the context of non-degenerate diffusions, and uses the Riemannian metric associated to the noise  to define a Riemannian divergence which allows him to differentiate second-order tensors  \cite{graham1977covariant} (a nice discussion of the work of Graham is also provided in \cite{eyink1996hydrodynamics}).
By analogy with Maxwell's equations, Graham notes that the Fokker--Planck current must be the Riemannian divergence of some anti-symmetric tensor field, which is precisely the result provided in \cite[Theorem 2]{ma2015complete}. 
The covariance of the Fokker--Planck equation and the diffusion process is also discussed in \cite{batrouni1986coordinate} and \cite{masoliver1987geometrical}, where the latter article derives conditions 
for the diffusion to be reversible (see also \cite{nelson1958adjoint}).
A less intuitive characterisation of the Fokker--Planck current of measure-preserving diffusions is also given in \cite{hwang1993accelerating} and in the case of non-degenerate diffusions on compact oriented manifolds, the book \cite{ikeda2014stochastic} effectively derives a complete recipe using the Riemannian metric derived from the noise to transform the Fokker--Planck vector field into a 1-form, that is then studied via its Hodge--de Rham decomposition.

A major shortcoming of these references is that they all assume the noise to be non-degenerate, in order to equip the manifold with a Riemannian metric, as well as the orientability of the manifold to work with differential forms.
 Yet, many important measure-preserving diffusions are degenerate, such as (the deterministic) Hamiltonian systems, or even the underdamped Langevin process. 
While on Euclidean space, we have a ``natural'' metric that we can use to differentiate second-order tensors, such metrics do not exist on general manifolds.
Concurrently to this article, a  covariant formulation was introduced in \cite{ding2020covariant} to remove the dependence on the metric.
However contrary to our recipe, this work relies on local coordinates, and does not take into account the presence of topological obstruction, thus leading to a recipe which is only valid locally, since on manifolds, there are divergence-free vector fields that cannot be globally expressed as the $P$-rotationnel of a bi-vector field.
Moreover, the relation with the canonical geometry of $P$ is not shown,
and as we have illustrated in \refsec{sec:volume-free-diffusions}, our geometric framework offers new insights even on Euclidean space.
More importantly, the intrinsic geometry of $P$ allows us to re-contextualise  the theory of measure-preserving diffusion within the realm of differential geometry.

In the bigger picture, we see that these results are a combination of two things: 
the construction of the Fokker--Planck equation \cite{fokker1914mittlere,planck1917satz,
kolmogoroff1931analytischen,risken1996fokker} and
 its expression in geometric %and divergence-
form as discussed above (see also \cite{osada1987diffusion}),
 combined with  geometric characterisations of divergence-free vector field.
 Such characterisations have already been studied in several works on geometric integrators  \cite{mclachlan2002splitting,hairer2006geometric} and have been known for at least a century, as seen in the work \cite{de1931analysis}.
 For example, the fact that vector fields that preserve the Lebesgue measure can be written as the divergence of an antisymmetric matrix (without any integrability assumption), which is all that is needed to obtain \cite[Theorem. 2]{ma2015complete},
  goes back at least to the works of Poincaré and Volterra in the 1880s, e.g.
 \cite{volterra1889generalisation}.

\section{Conclusion}

In this work, building on from results in Poisson mechanics, geometry, topology, physics, and statistics, we have 
presented the  complete and canonical characterisation of measure-preserving diffusions on arbitrary manifolds, 
which play a central role in many areas of science, both in terms of mathematical modelling and in statistics/machine learning. Our general framework provides a sound mathematical basis to design and study them. 
It not only extends and contextualise the results obtained in \cite{ma2015complete}   for the Euclidean case, and improves it by removing the integrability constraint, 
 but more importantly provides an elegant interpretation from a purely topological standpoint, relying solely on the geometry of the volume manifold $(\M, P)$.
This is achieved by constructing {\em potentials} for the Fokker-Planck current in the same way as how physical `potentials' such as the potential energy and the magnetic potentials are constructed in classical mechanics. On contractible sample spaces, the resulting diffusion is specified, just as with thermodynamic systems,  by two `brackets': an antisymmetric $\A$ that presents itself as the `potential' for the Fokker-Planck current, and a dissipative one $\S$, that is generated by the noise vector fields $(Y_i)$. 
Moreover, when the topology of the manifold is non-trivial (e.g., it is not connected), we also need to take into account an extra topological obstruction term to achieve global completeness, which we have shown to be parametrised  by harmonic forms on compact orientable manifolds, and non-zero elements of a twisted de Rham cohomology group in general.

In addition to fully characterising the measure-preserving diffusions, 
we have also studied their reversibility, associated flows on the space of volume measures,
the generalisations of Langevin processes to manifolds, 
and introduced a new recipe for volume-free  diffusions that are well-suited to the construction of Langevin-like sampling algorithms.
%Finally, we have briefly discussed the history and ergodicity of measure-preserving diffusions.
%Our present work provides a starting point to build a unified literature on measure-preserving diffusions that  better integrates their theory and  applications.
Our canonical formulation  properly takes into account the critical assumptions on the target measure, namely, that it is smooth and globally supported,
which allows us to analyse the diffusions through measure-informed versions of known results in differential geometry (obtained using  isomorphisms  induced by the target).
From a practical point of view, having intrinsic results  that  focus on the target measure and do not make any extra assumptions imply that these can be applied  regardless of the particular application, such as physics and machine learning.
In future works, we will further address how to develop efficient MCMC algorithms to sample from manifolds using this complete recipe, and furthermore, we aim to extend this framework in the context of infinite-dimensional diffusions, as considered in \cite{beskos2009mcmc,law2014proposals,ottobre2016function}, which may be useful for applications in stochastic climate modelling and data assimilation \cite{bouchet2012statistical,franzke2015stochastic,law2015data,majda2010linear,majda2008applied,
majda2001mathematical}.

%One of the most promising research avenue is to develop efficient MCMC algorithms to sample from manifolds \cite{arnaudon2019irreversible,barp2019hamiltonianB,barp2019hamiltonian}, and to extend this framework to infinite-dimensions \cite{beskos2009mcmc,law2014proposals,ottobre2016function} for applications in stochastic climate modelling \cite{bouchet2012statistical,franzke2015stochastic,majda2010linear,majda2008applied,majda2001mathematical}.

% Acknowledgements should go at the end, before appendices and references

\subsection*{Acknowledgements} We would like to acknowledge support for this project
from the National Science Foundation (NSF grant IIS-9988642)
and the Multidisciplinary Research Program of the Department
of Defense (MURI N00014-00-1-0637). 

% Manual newpage inserted to improve layout of sample file - not
% needed in general before appendices/bibliography.

\newpage

\appendix
\label{app:theorem}

% Note: in this sample, the section number is hard-coded in. Following
% proper LaTeX conventions, it should properly be coded as a reference:

%In this appendix we prove the following theorem from
%Section~\ref{sec:textree-generalization}:

\section{Proofs}

\subsection{Derivation of Fokker--Planck operator} \label{sec:proof-adjoint}

\begin{lemma} \label{proof-adjoint-gen} Given smooth vector fields $X, Y_1, \ldots, Y_N$ on  $\M$, consider the Stratonovich SDE
\begin{equation}
 \diff Z_t = X(Z_t) \,\diff t +  Y_i(Z_t) \circ \diff W^i_t\,.
\end{equation}
Then its generator is given by
$
\gen f = Xf+\half  Y_iY_if
$.
The Fokker--Planck operator, viewed as the formal adjoint of $\L$ in $L^2(\mu_\M)$ (if the boundary $\partial \M$ is non-empty we restrict to functions that vanish on the boundary) is given  by
\begin{equation}  
  \gen^*f =    \div_{\mu_\M}\big(-fX + \half \div_{\mu_\M}(fY_i)Y_i \big)=\div_{\mu_\M}\left(-fX + \half Y_i(f)Y_i+ \half  f\div_{\mu_\M}(Y_i)Y_i \right) \, .
\end{equation} 
\end{lemma}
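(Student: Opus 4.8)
The plan is to compute the generator of the Stratonovich SDE, then take its formal adjoint with respect to the $L^2(\mu_\M)$ pairing $\langle f,h\rangle = \int_\M fh\,\dd\mu_\M$. The generator is stated in the lemma, $\L f = Xf + \tfrac12 Y_iY_if$, so the real task is the adjoint computation. Let me think about how I'd organize this.

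The generator part: For a Stratonovich SDE $\dd Z_t = X\,\dd t + Y_i\circ\dd W^i_t$, the standard conversion to Itô form gives a drift correction. The Stratonovich-to-Itô correction for the $Y_i\circ\dd W^i_t$ term produces an extra drift $\tfrac12 Y_i(Y_i^k)\partial_k$ type term — actually $\tfrac12\sum_i \nabla_{Y_i}Y_i$ in components. The generator of an Itô diffusion $\dd Z_t = b\,\dd t + \sigma\,\dd W_t$ is $b^k\partial_k + \tfrac12(\sigma\sigma^T)^{k\ell}\partial_k\partial_\ell$. Combining the drift $X$ with the correction gives precisely $\L f = Xf + \tfrac12 Y_iY_if$, where $Y_iY_if$ means applying $Y_i$ twice as a first-order operator (so $Y_i(Y_i f)$), which expands to $Y_i^k\partial_k(Y_i^\ell\partial_\ell f) = Y_i^kY_i^\ell\partial_k\partial_\ell f + Y_i^k(\partial_kY_i^\ell)\partial_\ell f$. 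The second-order part matches $\tfrac12(\sigma\sigma^T)\partial^2$, and the first-order part is exactly the Stratonovich correction. This is the cited result and I'd just invoke it.

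Now the adjoint. The key facts I'd use repeatedly are the integration-by-parts identity for $\div_{\mu_\M}$: for a vector field $V$ and compactly supported $g$,
$$\int_\M \div_{\mu_\M}(V)\,g\,\dd\mu_\M = -\int_\M V(g)\,\dd\mu_\M,$$
which follows from $\L_V(g\mu_\M)=\div_{\mu_\M}(gV)\mu_\M$ and Stokes' theorem (boundary terms vanish). I'd handle the two pieces of $\L$ separately. For the drift term, $\int (Xf)h\,\dd\mu_\M = \int X(f)h\,\dd\mu_\M = -\int f\,\div_{\mu_\M}(hX)\,\dd\mu_\M = \int f\,\div_{\mu_\M}(-hX)\,\dd\mu_\M$ after moving the derivative off $f$; the cleaner route is to write the adjoint acting on $h$ directly. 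For the diffusion term $\tfrac12 Y_iY_i$, I'd apply the integration-by-parts identity twice: first $\int (Y_i Y_i f)\,h\,\dd\mu_\M = -\int (Y_i f)\,\div_{\mu_\M}(hY_i)\,\dd\mu_\M$, and then expand $\div_{\mu_\M}(hY_i) = Y_i(h) + h\,\div_{\mu_\M}(Y_i)$ and integrate by parts once more on the $Y_i f$ factor, collecting terms into the form $\div_{\mu_\M}(\cdots)$ acting on $f$.

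**The main obstacle** will be bookkeeping: ensuring the first-order correction terms assemble exactly into the compact divergence form $\div_{\mu_\M}(\tfrac12\div_{\mu_\M}(fY_i)Y_i)$, and verifying the two equivalent expressions in the statement agree via $\div_{\mu_\M}(fY_i) = Y_i(f) + f\,\div_{\mu_\M}(Y_i)$, which gives
$$\tfrac12\div_{\mu_\M}(fY_i)Y_i = \tfrac12 Y_i(f)Y_i + \tfrac12 f\,\div_{\mu_\M}(Y_i)Y_i.$$
The delicate point is that the adjoint $\L^*$ must come out as a divergence of a \emph{single} effective current, so I must carefully track which terms land inside the outer $\div_{\mu_\M}$ versus which cancel. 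Concretely, after the double integration by parts I expect $\int(\L f)h\,\dd\mu_\M = \int f\,(\L^* h)\,\dd\mu_\M$, and reading off $\L^*$ (renaming $h\mapsto f$ for the stated form) should yield $\L^* f = \div_{\mu_\M}(-fX + \tfrac12\div_{\mu_\M}(fY_i)Y_i)$. I'd present the computation term-by-term, flagging that all boundary contributions vanish by the compact-support (or vanishing-on-$\partial\M$) assumption, and close with the algebraic identity above to recover the second expression.
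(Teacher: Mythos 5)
Your proposal is correct and follows essentially the same route as the paper's proof: invoke the standard generator formula $\L f = Xf + \tfrac12 Y_iY_if$ for the Stratonovich SDE, then take the adjoint term-by-term in $L^2(\mu_\M)$ — one integration by parts for the drift and two for the diffusion term — using $\div_{\mu_\M}(fY_i) = Y_i(f) + f\,\div_{\mu_\M}(Y_i)$ and the vanishing of boundary contributions via Stokes' theorem to assemble $\L^*f = \div_{\mu_\M}\bigl(-fX + \tfrac12 \div_{\mu_\M}(fY_i)Y_i\bigr)$. The paper's appendix performs the same double integration by parts, merely organized through explicit Leibniz expansions of products such as $Y_i(g\,Y_if)$ rather than your repeated use of the abstract identity $\int h\,V(g)\,\mu_\M = -\int g\,\div_{\mu_\M}(hV)\,\mu_\M$, so the two arguments coincide in substance.
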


\begin{proof}
Vector fields satisfy Leibniz rule
\begin{align*}
 \int g X(f) \mu_{\M} = \int X(gf)\mu_{\M} -\int fX(g) \mu_{\M}, 
\end{align*}
and note that $\int_\M \div(gX) \mu_{\M}=\int_\M \diff i_{gX}\mu_{\M} = \int_{\partial \M} i_{gX} \mu_{\M}=\int_{\partial \M} gi_{X} \mu_{\M}$ which vanishes if $g|_{\partial M}=0$ (here $\div \defn \div_{\mu_\M}$).
Hence
\begin{align*}
 \int g X(f) \mu_{\M} & = -\int gf \div(X)\mu_{\M} -\int fX(g) \mu_{\M} \\
 &=  \int f \big( -g\div(X)-X(g)\big) \mu_{\M} 
 \\
  &=  \int f \big( -\div(gX)\big) \mu_{\M} = \int f \gen^*g \mu_{\M}.
\end{align*}

Now let us compute the adjoint of the diffusion component. 
 First note that 
\begin{align*}
Y_k(g Y_kf) &= \div \big( gY_kf Y_k\big)-gY_kf \div(Y_k) = \\
&= \div \big( gY_kf Y_k\big)-\div \big( f g \div(Y_k) Y_k\big)+f \div\big(g \div(Y_k)Y_k \big) 
\end{align*} 
and
\begin{align*}
Y_igY_if= \div(f Y_ig Y_i)-f\div(Y_ig Y_i).
\end{align*}
It follows that if $f,g$ vanish on the boundary
\begin{align*}
\half\int g Y_i Y_if \mu_{\M}  &= \half\int \Big(Y_i\big(g Y_i f\big)  -  Y_i g Y_i f \Big)\mu_{\M}  \\
&=  \half
\int \Big( f \div\big(g \div(Y_i)Y_i \big) +f \div(Y_ig Y_i)\Big)\mu_{\M}  \\
&= \int f \half\div \big( \div(gY_i)Y_i \big) \mu_{\M}  = \int f  \gen^*g \mu_{\M}  .
\end{align*}
\end{proof}

%\begin{proposition}\label{prop:forced-mechanics}
%The flow of the vector field $X_V+Y$ leaves the Gibbs measure $P = p_{\infty} \,\mu_\M$ invariant if and only if $Y$ satisfies \eqref{X_def}.
%\end{proposition}
%
%\begin{proof}
%First, noting that $X_{p_\infty} = -\beta p_{\infty} X_V$ and $\div_{\mu_\M}(fX) = X(f) + f \div_{\mu_\M}(X)$ for any smooth function $f \in C^\infty(\M)$ and vector field $X \in \Gamma(T\M)$, we have
%\begin{align*}
%\div_{\mu_\M}(X_{p_{\infty}}-\beta p_{\infty}Y) = -\beta \left[p_{\infty} \div_{\mu_\M}(X_V) + X_V(p_\infty)   + p_\infty \div_{\mu_\M}(Y) + Y(p_{\infty})\right].
%\end{align*}
%Next, computing the divergence of $X_V+Y$ with respect to the measure $P$ gives us
%\begin{align*}
%\div_{P}(X_V+Y) \,P &= \L_{X_V} P + \L_Y P\\
% &= p_{\infty} (\L_{X_V} \mu_\M) + X_V(p_{\infty})\,\mu_\M + p_{\infty} (\L_Y \mu_\M) + Y(p_{\infty})\,\mu_\M \\
%&= \div_{\mu_\M}(X_{p_{\infty}}-\beta p_{\infty}Y) \,\mu_\M.
% \end{align*}
%Hence, the flow of $X_V+Y$ preserves $P$ if and only if $\div_{\mu_\M}(X_{p_{\infty}}-\beta p_{\infty}Y) = 0$.
%\end{proof}

\subsection{Proof of Theorem \ref{thrm-gibbs}} \label{sec:thrm-gibbs}

\begin{theorem} 
The Gibbs measure \eqref{gibbs-def} is preserved under the bracket diffusion \eqref{Bracket-SDE}, in the sense that  $\gen^*p_{\infty}=0$,
if and only if the vector field $Y$ satisfies 
\begin{equation}
  \div_{\mu_\M}(X_{p_{\infty}}^\bi-\beta p_{\infty}Y)=0\, .
  \end{equation}
\end{theorem}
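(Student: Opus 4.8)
The plan is to compute the Fokker--Planck operator $\gen^* p_\infty$ directly from the adjoint formula \eqref{Fokker-general} and to show that it equals $\frac{1}{\beta}\div_{\mu_\M}(X_{p_\infty}^\bi - \beta p_\infty Y)$, from which the claimed equivalence is immediate since $\beta > 0$.

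First I would read off the total drift of the bracket diffusion \eqref{Bracket-SDE}, namely $X = X_H^\bi + Y - \half \beta Y_i(H) Y_i + \half \div_{\mu_\M}(Y_i) Y_i$, and substitute it together with $f = p_\infty$ into \eqref{Fokker-general}. Expanding $-p_\infty X$ produces, among other terms, $-\half p_\infty \div_{\mu_\M}(Y_i) Y_i$, which cancels against the Stratonovich-to-It\^o correction $+\half p_\infty \div_{\mu_\M}(Y_i) Y_i$ already present in \eqref{Fokker-general}. Thus all explicit $\div_{\mu_\M}(Y_i)$ contributions disappear.

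The decisive simplification comes from the Gibbs form $p_\infty \propto e^{-\beta H}$, which gives $Y_i(p_\infty) = -\beta p_\infty Y_i(H)$. Consequently the noise term $\half Y_i(p_\infty) Y_i$ exactly cancels the dissipative log-density drift $+\half \beta p_\infty Y_i(H) Y_i$ arising from $-p_\infty X$; this is precisely the fluctuation--dissipation balance at the level of the noise fields. After these two pairs of cancellations, the argument of $\div_{\mu_\M}$ collapses to $-p_\infty X_H^\bi - p_\infty Y$.

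Finally I would rewrite $p_\infty X_H^\bi$ in terms of the modular-type field $X_{p_\infty}^\bi$. Since $\dd p_\infty = -\beta p_\infty\, \dd H$ and $\bi$ is $C^\infty(\M)$-linear in its first slot, we obtain $X_{p_\infty}^\bi = \bi(\dd p_\infty, \cdot) = -\beta p_\infty X_H^\bi$, so that $p_\infty X_H^\bi = -\frac{1}{\beta} X_{p_\infty}^\bi$. Substituting yields $\gen^* p_\infty = \div_{\mu_\M}\big(\frac{1}{\beta} X_{p_\infty}^\bi - p_\infty Y\big) = \frac{1}{\beta}\div_{\mu_\M}(X_{p_\infty}^\bi - \beta p_\infty Y)$, which gives \eqref{eq:X_def}. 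The computation itself is entirely routine; the only delicate point is the bookkeeping of the several $Y_i$-dependent terms, and the main conceptual content is the observation that both the volume-correction terms and the log-density terms cancel in matched pairs, leaving exactly the $\mu_\M$-divergence of $\frac{1}{\beta} X_{p_\infty}^\bi - p_\infty Y$.
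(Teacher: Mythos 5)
Your proof is correct and follows essentially the same route as the paper's: both substitute the drift of \eqref{Bracket-SDE} into the adjoint formula \eqref{Fokker-general} with $f = p_\infty$, cancel the volume-correction terms $\half p_\infty \div_{\mu_\M}(Y_i)Y_i$ and use $Y_i(p_\infty) = -\beta p_\infty Y_i(H)$ to cancel the dissipative terms against the noise contribution, and finally rewrite $-p_\infty X_H^\bi = \frac{1}{\beta}X_{p_\infty}^\bi$ to arrive at $\gen^* p_\infty = \div_{\mu_\M}\big(\frac{1}{\beta}X_{p_\infty}^\bi - p_\infty Y\big)$. The only difference is cosmetic bookkeeping: you expand $\half \div_{\mu_\M}(p_\infty Y_i)Y_i$ by the product rule and cancel in matched pairs, whereas the paper keeps that term intact and cancels it against the collected $-\half Y_i(p_\infty)Y_i - \half p_\infty \div_{\mu_\M}(Y_i)Y_i$.
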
 
\begin{proof}
Let $p(x)= \frac 1 Z e^{-\beta x}$ for $x \in \mathbb R$, so $p_{\infty} = p \circ H :\M \to \R $, and 
$X_{p_\infty}=-p_\infty X_H$. 
Then using the Fokker-Planck operator defined in \eqref{Fokker-general}, we have
\begin{align*}
\gen^*p_{\infty}&=\div\Bigg( -p_{\infty} \Big(X_{H }-\frac{\beta}{2}Y_i(H )Y_i +\half\div(Y_i)Y_i+Y \Big)+ \half \div\big(p_{\infty}Y_i\big)Y_i \Bigg) \\
&=\div\Bigg(  \frac 1 {\beta}X_{p_{\infty}}-p_{\infty}Y-\frac{1}{2}Y_i(p_{\infty})Y_i- \half p_{\infty}\div(Y_i)Y_i + \half \div\big(p_{\infty} Y_i\big)Y_i \Bigg)\\  
&=\div\Bigg( \frac 1 {\beta}X_{p_{\infty}}-p_{\infty}Y -\frac{1}{2}Y_i(p_{\infty})Y_i-\half p_{\infty}\div(Y_i)Y_i + \half \div\big(p_{\infty}Y_i\big)Y_i \Bigg) 
 \\
&=\div\Bigg(\frac 1 {\beta}X_{p_{\infty}}-p_{\infty}Y  -\half \div\big(p_{\infty}Y_i\big)Y_i + \half \div\big(p_{\infty}Y_i\big)Y_i \Bigg) \\
&=\div\Big(\frac 1 {\beta}X_{p_{\infty}}-p_{\infty}Y \Big),
\end{align*}
thus $\gen^*p_{\infty}=0$ iff $Y$ satisfies $\div\big(\frac 1 {\beta}X_{p_{\infty}}-p_{\infty}Y \big)=0$.
\end{proof}

\subsection{Recovering The Euclidean Complete Recipe}\label{sec:complete-recipe-recovery}

\begin{corollary} Let $\M=\R^n$ with flat metric, $\mu_\M = \diff x$,  $\sigma_{ij}\defn Y^i_j$, $D \defn\half \sigma \sigma^T$ and $Q_{ij} \defn \A^{ij}$. 
Then \eqref{Bracket-SDE} reduces to the It\^o diffusion considered in the complete recipe for SGMCMC \cite{ma2015complete}
\begin{equation}
\diff Z_t = -\big( Q+D\big)\nabla H \diff t + \nabla \cdot \big(Q+D \big)\diff t + \sqrt{2D}\diff W_t\,,
\end{equation}
where  we use the convention $(\nabla \cdot Q)_i \defn \partial_j Q_{ij}$ as in   \cite{ma2015complete}.
\end{corollary}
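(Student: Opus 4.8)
The plan is to start from the bracket diffusion \eqref{Bracket-SDE} with antisymmetric bracket $\bi = \A$ and the modular choice $Y = X^{\mu_\M}_\A$ dictated by Theorem~\ref{thrm-gibbs} (so that the drift is exactly that of the measure-preserving $\A$-diffusion \eqref{eq:modular-diffusion}), impose the Euclidean identifications $\M = \R^n$, $\mu_\M = \diff x$, $\A^{ij} = Q_{ij}$, $(Y_i)^j = \sigma_{ji}$ and $D = \half\sigma\sigma^\top$, and take $\beta = 1$ to match the normalisation $P \propto e^{-H}$ of \cite{ma2015complete}. Because the target recipe \eqref{eq:intro-complete} is written in It\^o form while \eqref{Bracket-SDE} carries Stratonovich noise, the content of the proof is to (i) read each deterministic term off in flat coordinates and (ii) track the Stratonovich-to-It\^o correction, which is exactly where the dissipative drift is absorbed.

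First I would dispatch the two antisymmetric terms. Writing $\A = \A^{ij}\,\partial_i \otimes \partial_j$, antisymmetry gives $X_H^\A = \A(\dd H,\cdot) = -Q\nabla H$, the leading drift of \eqref{eq:intro-complete}. For the modular term, the definition \eqref{eq:def-modular-field} together with the Euclidean divergence yields $X^{\mu_\M}_\A(f) = \partial_j(\A^{ij}\partial_i f) = (\partial_j\A^{ij})\,\partial_i f$, the second-derivative piece cancelling by antisymmetry of $\A^{ij}$; hence $X^{\mu_\M}_\A$ has components $\partial_j Q_{ij} = (\nabla\cdot Q)_i$, so $Y = \nabla\cdot Q$ in the stated convention. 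The log-density term then contributes $-\half Y_i(H)Y_i = -\half\,\sigma\sigma^\top\nabla H = -D\nabla H$.

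The remaining step, which I expect to be the main obstacle, is to show that the volume-dissipative drift $\half\div_{\mu_\M}(Y_i)Y_i$ and the Stratonovich-to-It\^o correction $\half(Y_i\cdot\nabla)Y_i$ combine into the single term $\nabla\cdot D$; individually neither is a divergence of $D$. In components their sum reorganises by the Leibniz rule as
$$\half\left[(\partial_j\sigma_{ji})\,\sigma_{ki} + \sigma_{ji}\,\partial_j\sigma_{ki}\right] = \half\,\partial_j(\sigma_{ji}\sigma_{ki}) = \partial_j D_{kj} = (\nabla\cdot D)_k.$$
Finally, since $\sigma\sigma^\top = 2D$ and the law of an It\^o diffusion depends on the noise only through $\sigma\sigma^\top$, the coefficient $\sigma$ may be replaced by the symmetric root $\sqrt{2D}$, turning the noise into $\sqrt{2D}\,\diff W_t$. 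Collecting the four drift contributions gives $-(Q+D)\nabla H + \nabla\cdot(Q+D)$, which is precisely \eqref{eq:intro-complete}; all steps but the product-rule merging of the dissipative drift with the Stratonovich correction are routine index bookkeeping under the identifications above.
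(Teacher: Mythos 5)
Your proof is correct and follows essentially the same route as the paper's: expand each term of \eqref{Bracket-SDE} (with $Y = X^{\mu_\M}_\A$) in flat coordinates, use antisymmetry of $\A$ to identify $X_H^\A = -Q\nabla H$ and $X^{\mu_\M}_\A = \nabla\cdot Q$, and merge the volume-dissipative drift with the Stratonovich-to-It\^o correction via the product rule to obtain $\nabla\cdot D$. Your only additions — explicitly invoking equality in law to replace $\sigma\,\diff W_t$ by $\sqrt{2D}\,\diff W_t$, and fixing $\beta=1$ — are points the paper's proof leaves implicit, so the two arguments coincide in substance.
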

\begin{proof}
In that case, the term $Y_i(H )Y_i= Y^j_iY^k_i \partial_jH  \partial_k$ represents the ``symmetric semi-definite part" of the dynamics since in flat space this is
 $(\sigma\sigma^T)_{jk} \partial_jH  \partial_k$ where $\sigma_{ij}=\sigma^i_j=Y^i_j$,
while the term $\div(Y_i)Y_i=\sigma_{ki}\partial_j \sigma_{ji}\partial_k$
 together with the It\^o-to-Statonovich correction yields the divergence of the diffusion matrix $D \defn \half \sigma \sigma^T$.
 Using the convention $(\nabla \cdot Q)_i \defn \partial_j Q_{ij}$ in \cite{ma2015complete}
\begin{multline*}
 \mathrm d Z_t = \left(\A ^{ji}\partial_j H \partial_i - \frac{\beta}{2} (\sigma\sigma^T)_{jk} \partial_jH  \partial_k +\half \sigma_{ki}\partial_j \sigma_{ji}\partial_k 
 - \partial_j \A ^{ji}\partial_i\right) \mathrm{d} t \\+ \sigma_{ij} \partial_i \circ \mathrm{d}W^j_t \\
 =
 \left(\A ^{ji}\partial_j H \partial_i- \frac{\beta}{2} (\sigma\sigma^T)_{jk} \partial_jH  \partial_k +\half \sigma_{ki}\partial_j \sigma_{ji}\partial_k 
+\half \sigma_{ki} \partial_k \sigma_{ji} \partial_j + \partial_j \A ^{ij}\partial_i\right) \mathrm{d} t \\+ \sigma_{ij} \partial_i  \mathrm{d} W^j_t \\
= \left( -Q \nabla H - \beta D \nabla H + \nabla \cdot D + \nabla \cdot Q \right) \mathrm{d} t + \sqrt{2D} \mathrm{d} W_t.
\end{multline*}
\end{proof}

\subsection{Derivation that Curl is Modular Field}\label{sec:curl-is-modular}
This result was proved in the thesis of one of the authors \cite{barp2020bracket}, but the proof relies on introducing Schouten--Nijenhuis brackets.
 Thus we here include a new direct and  more  constructive  proof:
\begin{proof}
We will prove the equivalence by demonstrating that the action of the modular vector field and the curl vector field are equal for all smooth functions $f$,
\begin{equation*}
\curl_P(\A)(f) \cdot P = X^{P}_{\A}(f) \cdot P.
\end{equation*}
The curl vector field is defined implicitly by the action
\begin{equation*}
i_{\curl_P(\A)} P = \dd \left( P^{\flat}(\A) \right)
\end{equation*}
which implies that for any $f$
\begin{equation} \label{eqn:curl_def}
\dd f \wedge i_{\curl_P(\A)} P = \dd f \wedge \dd \left( P^{\flat}(\A) \right).
\end{equation}
To simplify the left hand side note that
\begin{align*}
0 
&=
i_{\curl_P(\A)} \left( \dd f \wedge P \right)
\\
&=
i_{\curl_P(\A)}(\dd f) \cdot P - \dd f \wedge i_{\curl_P(\A)} P
\\
&=
\curl_P(\A)(f) \cdot P - \dd f \wedge i_{\curl_P(\A)} P,
\end{align*}
or
\begin{equation*}
\dd f \wedge i_{\curl_P(\A)} P =\curl_P(\A)(f) \cdot P.
\end{equation*}
For the right hand side we take
\begin{align*}
\dd \left( \dd f \wedge (P^{\flat}(\A)) \right)
&=
\dd^{2} f \wedge P^{\flat}(\A) - \dd f \wedge \dd \left( P^{\flat}(\A) \right)
\\
&=
-\dd f \wedge \dd \left(P^{\flat}(\A) \right).
\end{align*}
Substituting both results in Equation \eqref{eqn:curl_def} then gives
\begin{equation} \label{eqn:curl_action}
\curl_P(\A)(f) \cdot P = -\dd \left( \dd f \wedge P^{\flat}(\A) \right).
\end{equation}
Now we use the fact that for any function 
$P^{\sharp}\left(\dd f \wedge P^{\flat}(\A) \right) = i_{\dd  f} \A$ \cite{dufour2006poisson},
so that equation \eqref{eqn:curl_action} becomes
\begin{align*}
\curl_P(\A)(f) \cdot P 
= -\dd P^{\flat}\left( i_{\dd f} \A \right)
=
\dd P^{\flat}\left( X_f^\A \right)
=
 \div_{P}(X^{\A}_{f}) \cdot P
\end{align*}
as desired.
\end{proof}

\subsection{Derivation of Adjoint of Integration Pairing} \label{sec:fokker-density-adjoint}

\begin{lemma}  The Fokker--Planck operator of the Stratonovich SDE \eqref{general-SDE}, viewed as the
 formal adjoint of $\gen$ with respect to the pairing $(f,P) \mapsto \int f \dd P$
between smooth, compactly supported functions $f$ and smooth measures $P$, is given by
\begin{equation}
\L^*P = -\L_XP + \half \L_{Y_i} \L_{Y_i}P.
\end{equation} 
\end{lemma}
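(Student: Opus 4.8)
The plan is to reduce the whole computation to a single integration-by-parts identity for one vector field and then apply it iteratively. By definition, $\L^*P$ is the smooth measure characterised by $\int_\M (\gen f)\,\dd P = \int_\M f\,\dd(\L^*P)$ for every $f \in C_c^\infty(\M)$. Since the generator splits as $\gen f = Xf + \half Y_iY_if$ into a first-order and a second-order part, it suffices to determine the formal adjoint of the derivation $f \mapsto V(f)$ for an arbitrary smooth vector field $V$, and then to treat the composition $Y_iY_i$ by applying that adjoint twice.

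First I would establish the key identity: for any smooth vector field $V$ and any $f \in C_c^\infty(\M)$,
$$\int_\M V(f)\,\dd P = -\int_\M f\,\dd(\L_V P).$$
This follows from the Leibniz rule for the Lie derivative of the (twisted, top-degree) measure $fP$, namely $\L_V(fP) = V(f)\,P + f\,\L_V P$, together with the vanishing of $\int_\M \L_V(fP)$. The latter is just Stokes' theorem: because $fP$ is of top degree, $\dd(fP)=0$, so Cartan's formula gives $\L_V(fP) = \dd\, i_V(fP)$, and the integral over $\M$ of an exact form with compactly supported integrand has no boundary contribution. Rearranging the Leibniz identity yields the display, which says precisely that the adjoint of the derivation $V$ is $-\L_V$ acting on measures.

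Next I would apply this identity. For the drift it gives at once $\int_\M X(f)\,\dd P = -\int_\M f\,\dd(\L_X P)$. For the second-order term I would apply it twice: setting $g \defn Y_i(f)$ and noting that $\L_{Y_i}P = \div_P(Y_i)P$ is again a smooth measure, I obtain
$$\int_\M Y_i(Y_i(f))\,\dd P = -\int_\M Y_i(f)\,\dd(\L_{Y_i}P) = \int_\M f\,\dd(\L_{Y_i}\L_{Y_i}P),$$
where the two minus signs cancel on the second use. Summing the two contributions gives $\int_\M (\gen f)\,\dd P = \int_\M f\,\dd\big(-\L_X P + \half \L_{Y_i}\L_{Y_i}P\big)$ for all compactly supported $f$, which is exactly the claimed formula $\L^*P = -\L_X P + \half \L_{Y_i}\L_{Y_i}P$.

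The main obstacle — really the only subtle point — is justifying that these manipulations hold for smooth measures rather than genuine top-degree forms on an oriented manifold: one must interpret $P$ as a twisted top-degree form (a density), verify that $i_V$, $\dd$, $\L_V$ and Stokes' theorem all extend to this twisted setting, and confirm that $\L_{Y_i}P$ remains a smooth measure so that the adjoint identity can legitimately be reused in the second application. The compact support of $f$, hence of $fP$ and of $Y_i(f)\,\L_{Y_i}P$, is exactly what removes the boundary term at each Stokes step.
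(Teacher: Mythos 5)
Your proof is correct and follows essentially the same route as the paper's: the paper also integrates by parts via Cartan's formula and Stokes' theorem (its computation $\int \dd f \wedge i_X P = \int \dd(f\, i_X P) - \int f\, \dd i_X P$ is exactly your Leibniz identity $\L_V(fP) = V(f)P + f\L_V P$ in disguise), applying the resulting adjoint identity once to the drift and twice to each $Y_iY_i$ term. The only cosmetic difference is that you isolate the one-vector-field identity as a reusable lemma and explicitly flag the twisted-form/smooth-signed-measure bookkeeping, which the paper leaves implicit; note that your side remark $\L_{Y_i}P = \div_P(Y_i)P$ needs $P$ positive, but your argument never actually uses it, since $\L_{Y_i}P$ being a smooth twisted top-degree form suffices.
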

\begin{proof}
Recall the generator is $\L = X + \half Y_i \circ Y_i$.
Thus for the deterministic drift we find
\begin{align*}
\int X(f) P &= \int \dd f(X) P=
\int  i_X (\dd f) P
=\int \dd f \wedge i_X P \\
&=  \int \dd ( f \wedge i_X P)-\int f \dd i_X P= - \int f \L_X P. 
\end{align*}
Similarly, for the diffusion coefficient, 
\begin{align*}
 \int Y_i Y_i(f) P &= - \int Y_i(f) \L_{Y_i}P = \int f \L_{Y_i} \L_{Y_i}P.
\end{align*}

\end{proof}

\subsection{Derivation of $\A$-diffusion expressed in terms of the reference measure $\mu_\M$}\label{sec:derivation-of-canonical-A-diffusion}
Here, we include additional details to the derivation of the $\A$-diffusion \eqref{eq:modular-diffusion}  in terms of reference measure $\mu_\M$ from the abstract $\A$-diffusion \eqref{eq:A-diff-theorem}  expressed in terms of the target $P = e^{-\beta H} \mu_\M$.
First note that if $f$ is a non-vanishing  function, then $(f\mu_\M)^{\flat} = f \mu_\M^{\flat}$ and so $(f\mu_\M)^{\sharp} = \mu_\M^{\sharp} \circ \frac1 f$.
Hence
\begin{align*}
 \curl_{f\mu_\M}(X) &= (f \mu_\M)^{\sharp} \circ \dd \circ (f \mu_\M)^{\flat}(X) = (\mu_\M)^{\sharp} \left( \frac1 f \,\dd \left( f \,(\mu_\M)^{\flat}(X)\right)\right) \\
&= (\mu_\M)^{\sharp} \circ \dd \circ (\mu_\M)^{\flat}(X) + (\mu_\M)^{\sharp} \left( \left(\frac1 f \,\dd f\right) \wedge (\mu_\M)^{\flat}(X)\right)\\
&=
(\mu_\M)^{\sharp} \circ \dd_f \circ (\mu_\M)^{\flat},
\end{align*}
where $\dd_f \defn \dd + \dd \log | f| \wedge$ is the distorted de Rham derivative.

\begin{proposition}
Given $P = e^{-\beta H} \mu_\M$, we have
\begin{equation}\label{eq:div-P-Y}
\div_P(Y) = \div_{\mu_\M}(Y) - \beta Y(H),
\end{equation}
for any vector field $Y \in \X(\M)$ and
\begin{equation}\label{eq:curl-P-A}
\curl_P(\A) = X_\A^{\mu_\M} + \beta X_H^\A,
\end{equation}
for any bi-vector field $\A \in \X^2(\M)$.
\end{proposition}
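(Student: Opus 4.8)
The plan is to establish the two identities separately: the first, \eqref{eq:div-P-Y}, by a direct computation with the Lie derivative, and the second, \eqref{eq:curl-P-A}, by reducing it to the first via the modular-field interpretation of $\curl_P$ provided by Theorem \ref{lemma:div-curl}.

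For \eqref{eq:div-P-Y} I would start from the defining relation $\div_P(Y)\,P = \L_Y P$ and expand $\L_Y P = \L_Y(e^{-\beta H}\mu_\M)$ using the Leibniz rule for the Lie derivative acting on the product of a function and a top form, $\L_Y(g\,\mu_\M) = Y(g)\,\mu_\M + g\,\L_Y\mu_\M$. Taking $g = e^{-\beta H}$ gives $Y(e^{-\beta H}) = -\beta\,Y(H)\,e^{-\beta H}$, while by definition $\L_Y\mu_\M = \div_{\mu_\M}(Y)\,\mu_\M$. Collecting terms yields $\L_Y P = \big(\div_{\mu_\M}(Y) - \beta\,Y(H)\big)e^{-\beta H}\mu_\M = \big(\div_{\mu_\M}(Y) - \beta\,Y(H)\big)P$. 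Since $P$ is a positive measure it may be cancelled, giving \eqref{eq:div-P-Y}. This step is routine.

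For \eqref{eq:curl-P-A} the key idea is to avoid computing with the musical isomorphisms directly and instead invoke Theorem \ref{lemma:div-curl}, which identifies $\curl_P(\A)$ with the modular vector field $X^P_\A$, i.e. the derivation $f \mapsto \div_P(X_f^\A)$ from \eqref{eq:def-modular-field}. I would then apply the already-proved identity \eqref{eq:div-P-Y} to the particular field $Y = X_f^\A$, obtaining $\curl_P(\A)(f) = \div_P(X_f^\A) = \div_{\mu_\M}(X_f^\A) - \beta\,X_f^\A(H)$ for every $f \in C^{\infty}(\M)$. The first summand is, by definition, the action of $X^{\mu_\M}_\A$ on $f$. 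For the second I would use $X_f^\A = \A(\dd f,\cdot)$, so that $X_f^\A(H) = \A(\dd f,\dd H)$, and the antisymmetry of $\A$ gives $\A(\dd f,\dd H) = -\A(\dd H,\dd f) = -X_H^\A(f)$. Substituting, $\curl_P(\A)(f) = X^{\mu_\M}_\A(f) + \beta\,X_H^\A(f)$ for all $f$, and since two vector fields that agree as derivations on all smooth functions coincide, \eqref{eq:curl-P-A} follows.

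An alternative route to \eqref{eq:curl-P-A}, bypassing Theorem \ref{lemma:div-curl}, would exploit the distorted de Rham derivative computed immediately above: with $f = e^{-\beta H}$ one has $\dd\log|f| = -\beta\,\dd H$, whence $\curl_P = \mu_\M^\sharp \circ (\dd - \beta\,\dd H\wedge) \circ \mu_\M^\flat$, so that the undistorted part reproduces $\curl_{\mu_\M}(\A) = X^{\mu_\M}_\A$ while the twist term $\mu_\M^\sharp\big({-}\beta\,\dd H\wedge \mu_\M^\flat(\A)\big)$ must be recognised as $\beta X_H^\A$ using the relation $P^\sharp(\dd H\wedge P^\flat(\A)) = i_{\dd H}\A$ recalled in \refsec{sec:curl-is-modular}. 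The only genuine subtlety in either approach is tracking the sign arising from the antisymmetry of $\A$ (equivalently, from the interior-product sign convention), so I would fix that convention at the outset and carry it consistently; beyond that, the argument is bookkeeping.
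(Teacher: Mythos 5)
Your proposal is correct, but it takes a genuinely different route from the paper's own proof. For \eqref{eq:div-P-Y} the paper works through the musical isomorphisms: it first establishes the general distortion formula $\curl_{f\mu_\M} = \mu_\M^{\sharp} \circ \dd_f \circ \mu_\M^{\flat}$ with $\dd_f \defn \dd + \dd \log|f| \wedge \cdot$, specialises to $f = e^{-\beta H}$, and identifies the twist term via the wedge identity $\dd H \wedge i_Y \mu_\M = i_Y(\dd H) \wedge \mu_\M = \mu_\M^{\flat}(Y(H))$, which uses that $\dd H \wedge \mu_\M = 0$ for a top-degree twisted form; your Lie-derivative computation $\L_Y(e^{-\beta H}\mu_\M) = \left(\div_{\mu_\M}(Y) - \beta Y(H)\right)P$ reaches the same conclusion more directly from the definition $\div_P(Y)P \defn \L_Y P$. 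For \eqref{eq:curl-P-A} the paper stays within the same calculus, writing $\curl_P(\A) = \curl_{\mu_\M}(\A) - \beta\, \mu_\M^{\sharp}\left(\dd H \wedge \mu_\M^{\flat}(\A)\right)$ and invoking the interior-product identity $\mu_\M^{\sharp}\left(\dd H \wedge \mu_\M^{\flat}(\A)\right) = i_{\dd H}\A = -X_H^\A$ cited from \cite{dufour2006poisson}, whereas you reduce the bi-vector case to the vector case: apply Theorem \ref{lemma:div-curl} to $P$ itself to get $\curl_P(\A)(f) = \div_P(X_f^\A)$, then use \eqref{eq:div-P-Y} with $Y = X_f^\A$ and the antisymmetry $\A(\dd f, \dd H) = -\A(\dd H, \dd f)$. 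This reduction buys two things: it sidesteps the interior-product sign convention entirely (only manifest antisymmetry enters), and it exhibits both identities as instances of a single computation; it is also not circular, since the proof of Theorem \ref{lemma:div-curl} in \refsec{sec:curl-is-modular} is independent of this proposition, and the paper's own proof already leans on that theorem here (for $\mu_\M$ rather than $P$). What the paper's derivation buys in exchange is the intermediate operator identity $\curl_P = \mu_\M^{\sharp} \circ \dd_H \circ \mu_\M^{\flat}$, which is reused in \refsec{sec:completeness-bracket-diffusion} and explains conceptually where the two log-density terms in \eqref{eq:modular-diffusion} originate. Your sketched alternative route is essentially the paper's proof verbatim.
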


\begin{proof}
Taking $f = e^{-\beta H}$ in the above, we have
\begin{align*}
\div_P(Y) = \curl_P(Y) &= (\mu_\M)^{\sharp} \circ \dd \circ (\mu_\M)^\flat(Y) - \beta (\mu_\M)^{\sharp} \left( \dd H \wedge (\mu_\M)^{\flat}(Y)\right) \\
&= \div_{\mu_\M}(Y) -\beta (\mu_\M)^{\sharp} \left( \dd H \wedge i_{Y} \mu_\M \right).
\end{align*}
Now, using the identity $i_{Y}(\dd H \wedge \mu_\M) = i_{Y}\dd H \wedge \mu_\M - \dd H \wedge i_{Y} \mu_\M$ and noting that $\dd H \wedge \mu_\M = 0$ (since $\mu_\M$ is a top degree twisted form on $\M$), we have $\dd H \wedge i_{Y} \mu_\M = i_{Y}\dd H \wedge \mu_\M = (\mu_\M)^\flat (Y(H))$. The first identity \eqref{eq:div-P-Y} then follows immediately.

Similarly, we have
\begin{align*}
\curl_P(\A) &= \curl_{\mu_\M}(\A) - \beta (\mu_\M)^{\sharp} \left( \dd H \wedge (\mu_\M)^{\flat}(\A)\right).
\end{align*}
Noting that $(\mu_\M)^{\sharp} \left( \dd H \wedge (\mu_\M)^{\flat}(\A)\right) = i_{\dd H} \A \defn - X_H^\A$ (see \cite{dufour2006poisson}) and $\curl_{\mu_\M}(\A) = X_\A^{\mu_\M}$ (see \refsec{sec:curl-is-modular}), the second identity \eqref{eq:curl-P-A} follows.
\end{proof}

\subsection{Derivation of Measure-Preserving Diffusion on Riemannian Manifolds}\label{sec:Riemannian-proof}

\begin{theorem}
Let $\M$ be a compact orientable Riemannian manifold and let $\mu_\M \defn \vol$ and $\nabla \cdot$ denote respectively the Riemannian measure and divergence.
Then, any $e^{-H}\vol$-preserving diffusion has the form
 \begin{equation}
\diff  Z_t \defn  \left(X_H  + \half \left(    \nabla \cdot Y_i -Y_i(H) \right)Y_i -   \nabla \cdot \A  +  e^{H} \sharp \star ^{-1}\zeta  \right) \diff t + Y_i \circ \diff W^i_t,
\end{equation}
where $\star$ is the Hodge star operator, $\sharp$ is the Riemannian musical isomorphism, $\A$ is an antisymmetric bracket and $\zeta$ is a harmonic $(n-1)$-form (i.e., it satisfies ``Maxwell's equations'' $\diff \zeta=0$, $\diff \star \zeta =0$).
\end{theorem}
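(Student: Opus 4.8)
The plan is to specialise the global completeness theorem to the Riemannian setting and then invoke Hodge theory to turn the abstract topological obstruction $P^{\sharp}(\gamma)$ into an explicit harmonic form. By Theorem~\ref{thm:completeness}, the diffusion $\diff Z_t = X\diff t + Y_i\circ\diff W^i_t$ preserves $P = e^{-H}\vol$ if and only if its drift has the form $X = \curl_P(\A) + \half\div_P(Y_i)Y_i + P^{\sharp}(\gamma)$ for some $\A \in \mathfrak X^2(\M)$ and $\gamma \in H^{n-1}_{dR}(\M)$, so the proof reduces to rewriting each of these three pieces in terms of the Riemannian data $(\vol,\nabla,\star,\sharp)$.

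First I would dispose of the two elementary terms. Applying the identities \eqref{eq:div-P-Y} and \eqref{eq:curl-P-A} with $\beta=1$ and $\mu_\M=\vol$ gives $\div_P(Y_i) = \nabla\cdot Y_i - Y_i(H)$ and $\curl_P(\A) = X^{\vol}_{\A} + X^{\A}_H$. By Theorem~\ref{lemma:div-curl} the first summand equals $\curl_{\vol}(\A)$, and a one-line local computation --- writing $\div_{\vol}(X^{\A}_f) = \frac{1}{\sqrt{g}}\partial_j\!\big(\sqrt{g}\,\A^{ij}\partial_i f\big)$ and discarding the Hessian contraction, which vanishes by antisymmetry of $\A$ --- identifies $X^{\vol}_{\A}$ with $-\nabla\cdot\A$, the Riemannian divergence of the antisymmetric tensor (with the theorem's sign convention for that divergence). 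Writing $X_H$ for $X^{\A}_H$, these steps already account for the terms $X_H - \nabla\cdot\A + \half(\nabla\cdot Y_i - Y_i(H))Y_i$.

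The heart of the argument is the obstruction term. Since $\M$ is orientable its orientation bundle is trivial, so twisted $(n-1)$-forms coincide with ordinary ones and $H^{n-1}_{dR}(\M)$ is the usual de Rham group; by the Hodge theorem on the compact manifold each class contains a unique harmonic representative $\zeta$, which I take as $\gamma$. Harmonicity $\Delta\zeta = 0$ is equivalent to $\dd\zeta = 0$ together with $\delta\zeta = 0$, and since the codifferential satisfies $\delta = \pm\star\dd\star$ with $\star$ invertible, the second condition reads $\dd\star\zeta = 0$; these are the stated Maxwell equations. It then remains to express $P^{\sharp}(\zeta)$ Riemannianly: from $i_V\vol = \star(V^{\flat})$ one reads off $\vol^{\flat} = \star\circ\flat$ on vector fields, hence $\vol^{\sharp} = \sharp\circ\star^{-1}$ on $(n-1)$-forms, and since $(e^{-H}\vol)^{\sharp} = \vol^{\sharp}\circ e^{H}$ one obtains $P^{\sharp}(\zeta) = e^{H}\sharp\star^{-1}\zeta$. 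Collecting the four contributions then yields the asserted form.

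I expect the main obstacle to be bookkeeping rather than conceptual: one must check that the Hodge decomposition genuinely applies to the \emph{twisted} cohomology (this is exactly where orientability is used), and then track every sign through the chain $\curl_{\vol}(\A) = -\nabla\cdot\A$ and through the musical/Hodge-star conversions, keeping the index convention $(\nabla\cdot\A)^i = \partial_j\A^{ij}$ consistent with the definition $X^{\A}_H = \A(\dd H,\cdot)$. Everything else is a direct translation of results already established earlier in the paper.
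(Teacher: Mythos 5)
Your proof is correct, but it takes a genuinely different route from the paper's own argument in \refsec{sec:Riemannian-proof}. The paper never invokes the global completeness theorem here: it works directly with the Fokker--Planck operator, writing $\gen^* g = \delta\left(-gX + \half \nabla \cdot(gY_i)Y_i \right)^\flat$ via $\div_{\vol} = \delta \circ \flat$, then applies the Hodge decomposition to the Hodge-starred, flattened current, $\star\left(-p_{\infty}X + \half \nabla\cdot(p_\infty Y_i)Y_i\right)^\flat = \dd\alpha - \gamma$, identifies the coexact part $\sharp\,\delta\varepsilon$ with $\nabla\cdot(\A\, p_\infty)$ for an antisymmetric $\A$, and expands $\nabla\cdot(\A\, p_\infty) = p_\infty \nabla\cdot\A - p_\infty X_H$ to solve for the drift. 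You instead specialise Theorem \ref{thm:completeness} and use Hodge theory only to pick the harmonic representative, translating $\div_P$, $\curl_P$ and $P^\sharp$ via \eqref{eq:div-P-Y}, \eqref{eq:curl-P-A}, Theorem \ref{lemma:div-curl}, and the identities $i_V\vol = \star(V^\flat)$, $(e^{-H}\vol)^\sharp = \vol^\sharp \circ e^{H}$ --- all of which check out, including the sign $X^{\vol}_\A = -\nabla\cdot\A$ under the convention $(\nabla\cdot\A)^i = \partial_j \A^{ij}$ (differentiation on the first slot of the antisymmetric tensor, as in the paper's local computation). Your route is more modular: it isolates exactly where compactness and orientability enter (harmonic representatives for the now-untwisted cohomology), and it mirrors how the paper itself motivates the result in \refsec{sec:A-diffusion-compact}. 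The paper's route is self-contained within Riemannian Hodge theory and exhibits the bracket $\A$ constructively as the musical image of the coexact part, closer to the classical Ikeda--Watanabe argument. One step you should make explicit rather than leave to ``bookkeeping'': replacing an arbitrary closed representative $\gamma$ of the class by its harmonic representative $\zeta = \gamma - \dd\alpha$ requires absorbing the exact difference into the bracket via $\A \mapsto \A + P^{\sharp}(\alpha)$, using $P^{\sharp}(\dd\alpha) = \curl_P\left(P^{\sharp}(\alpha)\right)$; without this remark the harmonic choice is not obviously compatible with the fixed $\A$ from Theorem \ref{thm:completeness}.
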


\begin{proof}
Let $\delta \propto \star \dd \star$ be the co-differential. Since $\div_{\vol} = \delta \circ \flat$, the Fokker-Planck operator \eqref{Fokker-general} may be written as
\begin{equation}
  \gen^*g =  \delta\left(-gX + \half \nabla \cdot(gY_i)Y_i \right)^\flat  \, .
\end{equation}
It then follows by the Hodge decomposition that
$\L^*p_{\infty} =0$ iff $ \delta\left(-p_{\infty}X + \half  \nabla \cdot(p_{\infty}Y_i)Y_i \right)^\flat=0$ iff
$ \diff \star \left(-p_{\infty}X + \half  \nabla \cdot(p_{\infty}Y_i)Y_i \right)^\flat=0$ iff
$$ \star \left(-p_{\infty}X + \half  \nabla \cdot(p_{\infty}Y_i)Y_i \right)^\flat = \diff \alpha - \gamma $$
where $\gamma$ is Harmonic,
that is co-exact and closed, and vanishes whenever the $\dim(\M)-1$ de Rham cohomology is trivial.
Thus
$\left(-p_{\infty}X + \half  \nabla \cdot(p_{\infty}Y_i)Y_i \right)^\flat = \delta \varepsilon - \star ^{-1}\gamma $
where $\varepsilon \defn \star^{-1} \alpha$ is a 2-form.
Hence,
$$ -p_{\infty}X + \half  \nabla \cdot(p_{\infty}Y_i)Y_i =  \sharp \delta \varepsilon+ \sharp \star ^{-1}\gamma =  \nabla \cdot (\A  p_{\infty}) -  \sharp \star ^{-1}\gamma\, ,$$
for an appropriate antisymmetric bracket $\A $.
Now, $\nabla \cdot (\A  p_{\infty}) = p_{\infty} \nabla \cdot \A  - p_{\infty} X_H$, since $\nabla \cdot \A= \curl_{\vol}(\A)$ (proved below), and for any $f$
\begin{align*}
\curl_\mu(p_\infty\A ) [f] &= - \div_\mu( p_\infty X_f)=- X_f(p_\infty) - p_\infty \div_\mu X_f =
+ p_\infty X_f(H)- p_\infty \div_\mu X_f\\
&=
- p_\infty X_H(f)- p_\infty \div_\mu X_f,
\end{align*}
 so $$ \curl_\mu(p_\infty\A )= -p_\infty X_H+ p_\infty \curl_\mu \A . $$
 It follows that the drift vector field must take the form
$$
 X =  X_H + \half  \nabla \cdot(Y_i)Y_i - \half Y_i(H) Y_i  -    \nabla \cdot \A   +  e^H \sharp \star ^{-1}\gamma.
 $$
 \end{proof}

Note that the Riemann divergence $\nabla \cdot \A$ in the previous theorem is precisely the curl of the Riemann measure (up to a sign).
Indeed, if $(\M,\MM)$ is a pseudo-Riemannian manifold, recall that $\nabla \cdot \A= \text{Tr} \nabla \A $, where $\text{Tr}$ is the trace and $\nabla$ the covariant derivative.
Then using the fact that $S_{ab} \A^{ab}=0$ for any symmetric tensor $S_{ab}$ and anti-symmetric tensor $\A^{ab}$, we see that for any smooth function $f$
\begin{align*}
\nabla \cdot \big( f \nabla \cdot \A\big)&=
%\div\Big( f\frac{1}{\sqrt{|\MM|}} \partial_a( \sqrt{|\MM|}\A^{ab})\partial_b\Big)
%= \frac{1}{\sqrt{|\MM|}}\partial_b\Big(\sqrt{|\MM|}f\frac{1}{\sqrt{|g|}} \partial_a( \sqrt{|g|}\A^{ab}) \Big) \\
\frac{1}{\sqrt{|\MM|}}\partial_b\Big(f \partial_a( \sqrt{|\MM|}\A^{ab}) \Big)
= 
\frac{1}{\sqrt{|\MM|}}(\partial_bf)\Big( \partial_a( \sqrt{|\MM|}\A^{ab}) \Big) \\
&=
\frac{1}{\sqrt{|\MM|}}\partial_a \Big(  \sqrt{|G|}\A^{ab}(\partial_b f) \Big) = -
\frac{1}{\sqrt{|\MM|}}\partial_a \Big(  \sqrt{|\MM|} X_f^a\Big)=- \nabla \cdot  X_{f}^\A\,,
\end{align*}
so from \eqref{eq:X_def} applied to $\mu_\M=\vol$ and $f= p_\infty$, we see that 
$$\nabla \cdot \A = -X_\A^{\vol} =- \curl_\vol(\A).$$

 \subsection{Derivations for the Reversibility Section} \label{sec:proofs-reversibility-sec}
 
 Recall that $X^{P}_{\bi}(f) \defn \div_P(X^\bi_f)$ for any bracket $\bi$. Then
 \begin{corollary} The generator of a $P$-preserving expressed in the form of \eqref{eq:intro-A-diff} can be written as 
\begin{equation*}
\L f = \underbrace{ X^{P}_\A(f) +P^{\sharp}(\gamma)(f)}_{L^2(P)\text{-antisymmetric}} +  \underbrace{\half X^{P}_\SS(f)}_{L^2(P)\text{-symmetric}}.
\end{equation*}
Moreover, $\half X^{P}_\SS$ is  symmetric in $L^2(P)$, while $X^{P}_\A$ and $P^{\sharp}(\gamma)$  are both  antisymmetric  in $L^2(P)$, 
$$ \metric{X^P_\A f}{h}_P= -\metric{f}{X^P_\A h}_P , \qquad \metric{X^P_\SS f}{h}_P= \metric{f}{X^P_\SS h}_P,  $$
where $\metric{\cdot}{\cdot}_P$ denotes the $L^2(P)$ pseudo-inner product, $\metric{f}{h}_P \defn \int fh \,\dd P$.
Hence, $\L$ is symmetric  if and only if  $X_\A^P +P^{\sharp}(\gamma)=0$.
In general, the generator of \eqref{general-SDE}  satisfies  $\L = \half X^P_\SS$ if and only if the Fokker-Planck current of $P$  vanishes,   in which case, we say that $\L$ satisfies the \Definition{detailed balance condition}, and the diffusion is \Definition{reversible}.
Finally, $\half X^{P}_\SS$ is non-positive, i.e., 
$$\metric{\half X^{P}_\SS(f)}{f}_P \leq 0$$
for all $f \in C_c^{\infty}(\M)$.
\end{corollary}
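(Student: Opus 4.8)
The plan is to reduce every assertion to a single integration-by-parts identity for the $P$-divergence and then read off each claim in turn. First I would confirm the decomposition \eqref{eq:generator-sym-anti}: substituting the drift $X = \curl_P(\A) + P^{\sharp}(\gamma) + \half \div_P(Y_i)Y_i$ into the generator $\L f = Xf + \half Y_iY_if$, and using that $X^P_\SS(f) = \div_P(\SS(\dd f,\cdot)) = \div_P(Y_i(f)Y_i) = Y_iY_if + \div_P(Y_i)Y_i(f)$, the two noise contributions recombine so that $\L f = X^P_\A(f) + P^{\sharp}(\gamma)(f) + \half X^P_\SS(f)$, where $\curl_P(\A) = X^P_\A$ by Theorem \ref{lemma:div-curl}.

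The workhorse is the identity $\metric{X^P_\bi(f)}{h}_P = -\int \{f,h\}_\bi\,\dd P$, valid for any bracket $\bi$ and $f,h \in C_c^\infty(\M)$. I would prove it from the Leibniz rule $\int \div_P(V)h\,\dd P = -\int V(h)\,\dd P$ (the boundary term $\int \dd(h\,i_V P)$ vanishing by Stokes on compactly supported data), applied to $V = X^\bi_f$, since $X^\bi_f(h) = \bi(\dd f,\dd h) = \{f,h\}_\bi$. Feeding in the (anti)symmetry of the brackets then gives the two symmetry statements at once: because $\{f,h\}_\SS = \{h,f\}_\SS$ we get $\metric{X^P_\SS f}{h}_P = \metric{f}{X^P_\SS h}_P$, while $\{f,h\}_\A = -\{h,f\}_\A$ yields $\metric{X^P_\A f}{h}_P = -\metric{f}{X^P_\A h}_P$. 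For the remaining antisymmetric piece I would note that $W \defn P^{\sharp}(\gamma)$ is $P$-divergence-free, since $\div_P(W) = \curl_P(P^{\sharp}(\gamma)) = P^{\sharp}(\dd\gamma) = 0$ as $\gamma$ is closed; then $\int W(fh)\,\dd P = \int \dd(fh\, i_W P) = 0$ gives $\metric{Wf}{h}_P = -\metric{f}{Wh}_P$.

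Once these are in hand, the equivalences follow formally. Writing $\L = A + S$ with $A \defn X^P_\A + P^{\sharp}(\gamma)$ antisymmetric and $S \defn \half X^P_\SS$ symmetric, $\L$ is symmetric iff its antisymmetric part annihilates the pairing, i.e. $\metric{f}{Ah}_P = 0$ for all $f,h$; since $A$ is a genuine vector field (the derivation property of $X^P_\A$ requires $\A$ antisymmetric), this forces $A = X^P_\A + P^{\sharp}(\gamma) = 0$. For the reversibility statement I would compare with the Fokker--Planck current $\mathfrak J(P) = \half\div_P(Y_i)Y_i - X = -(X^P_\A + P^{\sharp}(\gamma)) = -A$, so $\mathfrak J(P) = 0 \iff A = 0 \iff \L = \half X^P_\SS$, which is exactly detailed balance. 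Finally, non-positivity is immediate from the pairing identity with $h = f$ and $\bi = \SS$: $\metric{\half X^P_\SS(f)}{f}_P = -\half\int \{f,f\}_\SS\,\dd P = -\half\int \sum_i Y_i(f)^2\,\dd P \le 0$, using the dissipative property $\{f,f\}_\SS \ge 0$.

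The main obstacle is essentially bookkeeping rather than conceptual: I must be careful that all integration-by-parts steps are justified on $C_c^\infty(\M)$ so the Stokes boundary terms genuinely vanish (the statement is explicitly restricted to compactly supported test functions for this reason), and that the recombination in the first step correctly matches the It\^{o}-type second-order term against $X^P_\SS$. The only point needing a little thought is the logical step ``$\L$ symmetric $\Rightarrow A = 0$,'' which relies on $A$ being a first-order derivation, so that vanishing of the $L^2(P)$-pairing against all $f$ upgrades to pointwise vanishing of the vector field $A$.
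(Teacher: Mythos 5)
Your proof is correct, and it reorganizes the paper's argument around a slightly different pivot. The paper proves the decomposition by a local-coordinate computation of $\div_P(X^\SS_f)$ (your coordinate-free Leibniz-rule derivation of the same identity is equivalent), and then establishes the two (anti)symmetry claims by computing formal adjoints separately: the first-order part $X^P_\A + P^{\sharp}(\gamma)$ is antisymmetric simply because it is a $P$-preserving vector field (the adjoint of a vector field $V$ is $f \mapsto -\div_P(fV) = -V(f) - f\,\div_P(V)$, and the $\div_P$ of that part vanishes), while the second-order part is shown self-adjoint by an explicit manipulation of the Fokker--Planck formula \eqref{Fokker-general}. You instead run everything through the single pairing identity $\metric{X^P_\bi f}{h}_P = -\int \{f,h\}_\bi \,\dd P$ and read symmetry or antisymmetry off the (anti)symmetry of the bracket $\bi$; this is the Dirichlet-form/carr\'e-du-champ viewpoint (consistent with the paper's remark that $\Gamma(f,h) = \{f,h\}_\SS$), and it has the advantage of delivering the non-positivity claim for free by setting $h = f$ --- a point the paper's written proof leaves implicit. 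The paper's route for the antisymmetric part is marginally more economical in that $P$-preservation handles $X^P_\A$ and $P^{\sharp}(\gamma)$ in one stroke, whereas you use bracket antisymmetry for the former and closedness of $\gamma$ for the latter; both mechanisms are sound (indeed $\curl_P \circ \curl_P = 0$ makes $X^P_\A = \curl_P(\A)$ divergence-free as well). Your upgrade from ``the pairing against $A$ vanishes'' to ``$A = 0$ pointwise,'' using that $A$ is a first-order derivation and $P$ is positive, is exactly the observation needed for the ``symmetric iff $X^P_\A + P^{\sharp}(\gamma) = 0$'' claim, which the paper states without proof.

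One small caveat: in the detailed-balance step, your middle equality $\mathfrak J(P) = -(X^P_\A + P^{\sharp}(\gamma))$ presupposes the drift is already in recipe form, whereas the corollary asserts the equivalence for the \emph{general} SDE \eqref{general-SDE}. The fix is immediate and already implicit in your computation: $\L f - \half X^P_\SS(f) = X(f) - \half \div_P(Y_i)Y_i(f) = -\mathfrak J(P)(f)$ for an arbitrary drift $X$, so $\L = \half X^P_\SS$ iff $\mathfrak J(P) = 0$ without invoking the decomposition.
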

\begin{proof}
The formula for the generator follows from \eqref{eq:dissipative-form-A-diffusion} and
 $\div_{P}(X^{\SS}_f)=  \div_P(Y_i)Y_i(f) + Y_iY_if$, as we prove using a local argument: given local coordinates $(z^i)$, writing $P = p_\infty \dd z $ we have
 \begin{align*}
 \div_{P}(X^{\SS}_f) = \frac{1}{p_\infty} \partial_r(p_\infty \SS^{jr} \partial_j f ) =
 \frac{1}{p_\infty} \partial_r(p_\infty Y^j_i Y^r_i \partial_j f )
 = \div_P(Y_i)Y_i(f)+ \partial_r( Y^j_i \partial_j f ) Y^r_i,
 \end{align*}
 which, combined with the fact that for any vector field $Y$,  $YY(f)=Y^i\partial_i(Y^j\partial_jf)=Y^i Y^j\partial_i \partial_j f +Y^i \partial_i Y^j \partial_j f$, yields the result.
From the proof of proposition \refsec{sec:proof-adjoint},
 we know
the (formal) adjoint of $X_P^\A$ is $f \mapsto -\div_P(fX^\A_P)$, and $-\div_P(fX^\A_P)= -X^\A_P(f)-f\div_P(X^\A_P)=-X^\A_P(f)$.
Indeed, this only requires the fact that the dynamics $X^\A_P$ preserves  $P$,
and thus still holds when we include the topological obstruction contribution, which has itself vanishing $\div_P$.
From \eqref{Fokker-general} we also know that the adjoint of $X^{\SS}_P$ is
$f \mapsto - \div_P \left( f\div_P(Y_i)Y_i \right)+  \div_{P} \big( \div_{P}(fY_i)Y_i \big)=  \div_{P} \big( Y_i(f)Y_i)=
\div_{P} \big( X^\A_f)= X^\SS_P(f) $.
\end{proof}

We denote the pushforward with respect to a diffeomorphism $\mathcal{R}$ on tensor fields by $\mathcal{R}_{*}$.  Then

\begin{corollary} 
 Let $\mathcal{R}$ be a target-preserving diffeomorphism,  which is an $\A$-antimorphism  and a $\SS$-morphism, that is
$$\mathcal{R}_*\A=-\A , \qquad \mathcal{R}_*\SS= \SS.$$
 Then, the generator of the $\A$-diffusion \eqref{eq:generator-sym-anti} is \Definition{reversible up to $\mathcal{R}$}, that is we have
$$ \metric{f}{\L h}_P = \metric{\L \mathcal{R}^*f}{\mathcal{R}^*h}_P, \quad \forall f,h \in C_c^{\infty}(\M).$$
\end{corollary}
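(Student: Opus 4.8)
The plan is to reduce everything to the symmetric/antisymmetric decomposition $\L = X^P_\A + P^{\sharp}(\gamma) + \half X^P_\SS$ established in the previous corollary, and to exploit two structural facts about the operator $U \defn \mathcal{R}^*$ acting on $C_c^\infty(\M)$: that it is an $L^2(P)$-isometry, and that it conjugates each bracket-operator into the operator of the pushed-forward bracket. First I would record that since $\mathcal{R}$ is target-preserving, $\mathcal{R}^* P = P$, whence for all $f,h$,
$$\metric{Uf}{Uh}_P = \int (f\circ \mathcal{R})(h\circ \mathcal{R})\,\dd P = \int fh\,\dd(\mathcal{R}_* P) = \metric{f}{h}_P,$$
so $U$ is an isometry of $L^2(P)$ with inverse $\mathcal{R}_*$.

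The key lemma I would then prove is the conjugation identity $U \circ X^P_\bi = X^P_{\mathcal{R}^* \bi} \circ U$, valid for any bracket $\bi$. This follows by combining naturality of the $P$-divergence under the $P$-preserving map, $\mathcal{R}^*(\div_P Z) = \div_P(\mathcal{R}^* Z)$ (which uses $\mathcal{R}^* P = P$), with naturality of the Hamiltonian construction, $\mathcal{R}^*(i_{\dd f}\bi) = i_{\dd(\mathcal{R}^* f)}(\mathcal{R}^* \bi)$. Applying this with $\bi=\A$, and noting that $\mathcal{R}_*\A=-\A$ is equivalent to $\mathcal{R}^*\A=-\A$, yields the anticommutation $U X^P_\A = -X^P_\A U$; applying it with $\bi=\SS$ and $\mathcal{R}^*\SS=\SS$ yields the commutation $U X^P_\SS = X^P_\SS U$. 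The obstruction term is handled exactly like $X^P_\A$: since $\mathcal{R}^* P = P$, pullback intertwines the musical isomorphism, $U\,P^{\sharp}(\gamma) = P^{\sharp}(\mathcal{R}^*\gamma)\,U$, so it also anticommutes with $U$ provided $\mathcal{R}^*\gamma=-\gamma$ (automatic for the genuine $\A$-diffusion, where $\gamma=0$).

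With these in hand the conclusion is pure bookkeeping. Expanding $\metric{\L Uf}{Uh}_P$ over the decomposition and pushing $U$ through each operator, the symmetric part gives, using commutation, the isometry, and the $L^2(P)$-symmetry of $X^P_\SS$,
$$\metric{\tfrac12 X^P_\SS Uf}{Uh}_P = \tfrac12\metric{U X^P_\SS f}{Uh}_P = \tfrac12\metric{X^P_\SS f}{h}_P = \metric{f}{\tfrac12 X^P_\SS h}_P,$$
while the antisymmetric part produces two sign flips — one from anticommutation, one from $L^2(P)$-antisymmetry — which cancel:
$$\metric{X^P_\A Uf}{Uh}_P = -\metric{U X^P_\A f}{Uh}_P = -\metric{X^P_\A f}{h}_P = \metric{f}{X^P_\A h}_P.$$
Summing the two contributions recovers $\metric{f}{\L h}_P$, which is the claim.

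The main obstacle is the conjugation lemma $U \circ X^P_\bi = X^P_{\mathcal{R}^*\bi}\circ U$: one must verify carefully that pullback by a $P$-preserving diffeomorphism commutes with $\div_P$ and correctly transports the contravariant bracket $\bi$ against the covector $\dd f$, and keep track that the hypotheses stated via $\mathcal{R}_*$ are equivalent to the corresponding ones for $\mathcal{R}^*$ — here the $\pm 1$ eigen-structure makes the equivalence immediate. Once the conjugation identity is secured, the antisymmetric part flips sign twice and the symmetric part not at all, so both terms reproduce $\metric{f}{\L h}_P$ with the correct sign, completing the argument.
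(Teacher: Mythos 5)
Your proof is correct and follows essentially the same route as the paper's: both hinge on $\mathcal{R}^*P = P$ making pullback an $L^2(P)$-isometry, on the intertwining identities $\mathcal{R}^* X^P_\A f = -X^P_\A \mathcal{R}^* f$ and $\mathcal{R}^* X^P_\SS f = X^P_\SS \mathcal{R}^* f$ (your conjugation lemma $U \circ X^P_\bi = X^P_{\mathcal{R}^*\bi} \circ U$, proved via naturality of $\div_P$ under the $P$-preserving map, is exactly what the paper verifies), and on the $L^2(P)$-(anti)symmetry of the two parts, with the two sign flips cancelling in the antisymmetric term. Your explicit remark that the obstruction term $P^{\sharp}(\gamma)$ anticommutes only under the extra condition $\mathcal{R}^*\gamma = -\gamma$ (automatic when $\gamma = 0$) is a point the paper's proof leaves implicit, but it does not alter the method.
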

\begin{proof}
First note that for any $\bi$-Hamiltonian vector field, since
$\mathcal{R}^*P = P$, then, using proposition 6.3.5 \cite{abraham2012manifolds},
$\mathcal{R}^* \L_{X_f}P= \L_{ (\mathcal{R}^{-1})_*X_f}\mathcal{R}^*P=
\L_{ (\mathcal{R}^{-1})_*X_f}P,   $
 so
$  \div_{P}(X_f) \circ \mathcal{R}= \div_{P}((\mathcal{R}^{-1})_*X_f)$.
Now consider  the antisymmetric part, $\L = X_P^\A$. Then
$\metric{f}{X_P^\A h}_P=
-\metric{X_P^\A f}{ h}_P=
-\metric{\mathcal{R}^*X_P^\A f}{ \mathcal{R}^*h}_P$,
where we have used $\mathcal{R}_{\sharp}P = P$ in the last equality.
Moreover $\mathcal{R}^*X_P^\A f = -X_P^\A \mathcal{R}^*f$ since  $\mathcal{R}$ is an $\A$-antimorphism,
$(\mathcal{R}^{-1})_*X_f^\A = -X_{f\circ \mathcal{R}}^\A  $.
The proof of the symmetric part is analogous, except we have
$\mathcal{R}^*X_P^\SS f = X_P^\SS \mathcal{R}^*f$.
Note that if $\mathcal{R}_* Y_i = \pm Y_i$ then $\mathcal{R}$ is a $\SS$-morphism, since $(\mathcal{R}^{-1})_*X_f^\SS =(\mathcal{R}^{-1})_*(Y_i(f)Y_i)
=Y_i(f) \circ \mathcal{R} (\mathcal{R}^{-1})_*Y_i
= \left(\mathcal{R}_* (\mathcal{R}^{-1})_* Y_i \right)(f) \circ \mathcal{R} (\mathcal{R}^{-1})_*Y_i=
\left((\mathcal{R}^{-1})_* Y_i \right) (\mathcal{R}^*f) (\mathcal{R}^{-1})_* Y_i = Y_i(\mathcal{R}^*f) Y_i = X^\SS_{\mathcal{R}^*f} $.
\end{proof}

\subsection{Non-Degenerate and Overdamped Systems}\label{sec:non-degenerate-diffusions}

\begin{theorem}
Any $P \propto p_\infty \vol$ diffusion generated by \eqref{eq:generator-non-degenerate} takes, at least up to topological obstructions, the form \eqref{eq-non-degenerate-general} for some $\A \in \mathfrak X^2(\M)$ ($\nabla$ is the Riemannian gradient) 
\begin{equation*}
\dd Z_t =  \half \nabla \log p_\infty(Z_t)\dd t - X_{\log p_\infty}^\A(Z_t)\dd t - \nabla \cdot \A(Z_t)\dd t + \dd B_t.
\end{equation*} 
\end{theorem}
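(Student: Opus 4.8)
The plan is to reduce the statement to the global completeness result of Theorem~\ref{thm:completeness} by first rewriting the elliptic generator $\L = X + \half\Delta$ as a Stratonovich SDE whose noise realises the Riemannian co-metric. Since $\L$ is elliptic, its second-order symbol is a positive-definite contravariant tensor, which I identify with the co-metric $\MM^{-1}$; as recalled in \refsec{sec:examples-A-diff}, I may then choose noise vector fields $(Y_i)$ with $\SS \defn Y_i\otimes Y_i = \MM^{-1}$. The key preliminary step is to match $\half\Delta$ with this noise: the Laplace--Beltrami operator has the H\"ormander form $\half\Delta = \half\div_\vol(Y_i)Y_i + \half Y_iY_i$, equivalently $\diff B_t = \half\div_\vol(Y_i)Y_i\,\diff t + Y_i\circ\diff W^i_t$ as established in the main text. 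Hence the diffusion $\diff Z_t = X\,\diff t + \diff B_t$ is precisely the Stratonovich SDE \eqref{general-SDE} with drift $\widetilde X \defn X + \half\div_\vol(Y_i)Y_i$ and noise $(Y_i)$, and its generator is $\widetilde X + \half Y_iY_i = \L$.

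Next I would invoke Theorem~\ref{thm:completeness}: since $P \propto p_\infty\vol$ is preserved, the drift must satisfy $\widetilde X = \curl_P(\A) + P^\sharp(\gamma) + \half\div_P(Y_i)Y_i$ for some $\A\in\mathfrak X^2(\M)$ and topological term $P^\sharp(\gamma)$. Solving for $X$ and using the decomposition identities for $P = e^{-H}\vol$ with $H = -\log p_\infty$, namely $\div_P(Y_i)Y_i = \div_\vol(Y_i)Y_i - Y_i(H)Y_i$ and $\curl_P(\A) = \curl_\vol(\A) + X_H^\A$, the two $\div_\vol(Y_i)Y_i$ contributions cancel and I am left with $X = \curl_\vol(\A) + X_H^\A - \half Y_i(H)Y_i + P^\sharp(\gamma)$. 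It then remains to translate each term into Riemannian language: $Y_i(H)Y_i = X_H^\SS = \nabla H = -\nabla\log p_\infty$ since $\SS = \MM^{-1}$; $X_H^\A = -X_{\log p_\infty}^\A$; and $\curl_\vol(\A) = -\nabla\cdot\A$ by the identity $\nabla\cdot\A = -\curl_\vol(\A)$ proved in \refsec{sec:curl-is-modular}. Substituting and discarding the topological obstruction $P^\sharp(\gamma)$ (the ``up to topological obstructions'' clause) yields $\diff Z_t = \big(\half\nabla\log p_\infty - X_{\log p_\infty}^\A - \nabla\cdot\A\big)\diff t + \diff B_t$, as claimed.

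I expect the main obstacle to be the bookkeeping in the first paragraph, specifically ensuring that the first-order correction separating $\half\Delta$ from the raw H\"ormander operator $\half Y_iY_i$ is exactly the volume-dissipative drift $\half\div_\vol(Y_i)Y_i$, so that it combines with the $\half\div_P(Y_i)Y_i$ term from completeness to leave only the Riemannian gradient $\half\nabla\log p_\infty$. A second, more structural, subtlety is the global existence of a finite family $(Y_i)$ with $Y_i\otimes Y_i = \MM^{-1}$; while this always holds locally, globally it is cleanest to realise $\diff B_t$ as the projection of the horizontal Stratonovich diffusion $\diff O_t = L_i\circ\diff W^i_t$ on the orthonormal frame bundle, as in the remark closing \refsec{sec:examples-A-diff}, which sidesteps the need for globally defined noise fields. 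The orientability and paracompactness hypotheses enter only through the appeal to Theorem~\ref{thm:completeness} and the identification of $P^\sharp(\gamma)$ with the $(n-1)$-st twisted de Rham cohomology, and are absorbed into the topological-obstruction caveat.
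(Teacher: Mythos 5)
Your proposal is correct, but it takes a genuinely different route from the paper. The paper never introduces noise fields at all: it exploits the symmetry of $\Delta$ in $L^2(\vol)$ to write the stationarity condition directly as $\L^*p_\infty = \nabla\cdot\left(-p_\infty X + \half\nabla p_\infty\right) = 0$, so the Fokker--Planck current $-p_\infty X + \half\nabla p_\infty$ is Riemannian-divergence-free; it then invokes the argument of \refsec{sec:Riemannian-proof} to write this current (locally, up to the harmonic/cohomological term) as $\nabla\cdot(\A p_\infty)$, and the product-rule identity $\nabla\cdot(\A p_\infty) = p_\infty\nabla\cdot\A + p_\infty X^\A_{\log p_\infty}$ immediately yields the claimed drift. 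Your route instead realises $\half\Delta$ in H\"ormander form via $(Y_i)$ with $\SS = Y_i\otimes Y_i = \MM^{-1}$ (the identity $\Delta f = \div_\vol(Y_i)Y_i(f) + Y_iY_if$, i.e.\ $\div_P(X^\SS_f)=\div_P(Y_i)Y_i(f)+Y_iY_if$, is indeed proved in \refsec{sec:proofs-reversibility-sec}) and then reduces to Theorem~\ref{thm:completeness}; the subsequent bookkeeping, including the cancellation of the two $\half\div_\vol(Y_i)Y_i$ terms and the translations $-\half Y_i(H)Y_i = \half\nabla\log p_\infty$, $X_H^\A = -X^\A_{\log p_\infty}$, $\curl_\vol(\A) = -\nabla\cdot\A$, is accurate (though the last identity is established at the end of \refsec{sec:Riemannian-proof}, not in \refsec{sec:curl-is-modular}, which proves $\curl_P(\A)=X^P_\A$). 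What each approach buys: yours makes explicit that the theorem is a corollary of the general Stratonovich completeness result, at the cost of the global square-root-of-the-co-metric issue you flag; the paper's computation sidesteps that issue entirely, since the elliptic generator and its current $\half\nabla\log p_\infty - X$ are globally defined without any choice of $(Y_i)$. On that point, a cleaner resolution than the frame-bundle detour is available within your own argument: since the current $\half\div_P(Y_i)Y_i - \widetilde X = \half\nabla\log p_\infty - X$ is independent of the local choice of noise fields, it is a globally defined $\div_P$-free vector field, so Theorem~\ref{thrm:Dyn-is-loc-curl} applies to it directly and the local choices of $(Y_i)$ never need to be glued.
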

\begin{proof}
Since $\Delta$ is symmetric in $L^2(\M,\vol)$, 
 $$\L^*p_{\infty}=  -\nabla \cdot (p_\infty X) + \half \Delta p_\infty = \nabla \cdot( -p_\infty X + \half \nabla p_\infty),$$
Thus, following the proof in \refsec{sec:Riemannian-proof}
$$ -p_\infty X + \half \nabla p_\infty= \nabla \cdot (\A  p_{\infty}) = p_{\infty} \nabla \cdot \A  + p_{\infty} X_{\log  p_{\infty} }^\A,$$
at least locally (we refer to \cite{ikeda2014stochastic} for an analysis of non-degenrate measure-preserving diffusions in  the case of compact oriented manifolds),
and thus the drift   has the form
$$ X =\half \nabla \log p_\infty -  \nabla \cdot \A  - X_{\log  p_{\infty} }^\A,  $$
has claimed.
\end{proof}

\subsection{Rate of Change of Functionals along Measure-preserving Diffusions}\label{sec:rate of change appendix}

\begin{proposition} Let $F$ be a  functional on the space of volume measures, and suppose $\frac{\delta F}{\delta Q}\in C_c^{\infty}(\M)$ (or that more generally Stokes' theorem holds).
 The rate of change of $F$ along the $P$-preserving diffusion 
is given by
$$ \frac{\dd }{ \dd t} F(\mu_t) = \left\{\log \frac{\dd P}{\dd \mu_t}, \frac{\delta F}{\delta \mu_t} \right\}_{\int_{\mathcal T}}(\mu_t) +  \metric{ \frac{\dd \mu_t}{\dd P} }{ \mu^{\sharp}_t(\gamma) \left[\frac{\delta F}{\delta \mu_t}\right]  }_{\mu_t}$$
where $\mathcal T$ is the thermodynamic bracket 
$ \SS/\sqrt 2  -\A$, and $\gamma$ the topological obstruction.
\end{proposition}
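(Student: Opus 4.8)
The plan is to differentiate $F(\mu_t)$ by combining the forward Kolmogorov equation $\partial_t\mu_t = \L^*\mu_t$ with the continuity-equation form of the flow already established in this section, and then to pair against the functional derivative. Decomposing the target $P$ with respect to the running reference measure $\mu_t$ at each time $t$ and abbreviating $h \defn \log\frac{\dd P}{\dd\mu_t}$ and $\phi \defn \frac{\delta F}{\delta\mu_t}$, the equation derived just above reads
$$\frac{\partial\mu_t}{\partial t} = \div_{\mu_t}\!\left(X^\A_h - \half X^\SS_h\right)\mu_t - \dd\tfrac{\dd\mu_t}{\dd P}\big(\mu^\sharp_t(\gamma)\big)\,\mu_t.$$
By definition of the functional derivative we have $\frac{\dd}{\dd t}F(\mu_t) = \int \phi\,\frac{\partial\mu_t}{\partial t}$, so I would substitute this expression and treat its two contributions separately. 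Note at the outset that the purely volume-preserving modular field $X^{\mu_t}_\A = \curl_{\mu_t}(\A)$ never appears here, since it is annihilated by $\div_{\mu_t}$ through $\curl_{\mu_t}\circ\curl_{\mu_t}=0$; only the density-carrying part of $\curl_P(\A)$ survives, which is why the bracket in the answer is evaluated against $h=\log\frac{\dd P}{\dd\mu_t}$.

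For the conservative/dissipative contribution I would integrate by parts against the smooth measure $\mu_t$: since $\div_{\mu_t}(\phi Z)=\phi\,\div_{\mu_t}(Z)+Z(\phi)$ for any vector field $Z$, Stokes' theorem (valid because $\phi=\frac{\delta F}{\delta\mu_t}$ is compactly supported, or by the standing hypothesis) yields $\int \phi\,\div_{\mu_t}(Z)\,\mu_t = -\int Z(\phi)\,\mu_t$. Applying this with $Z = X^\A_h - \half X^\SS_h$ and using $X^\bi_h(\phi)=\bi(\dd h,\dd\phi)=\{h,\phi\}_\bi$ converts this term into $\int\big(\half\{h,\phi\}_\SS - \{h,\phi\}_\A\big)\mu_t$. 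The symmetric and antisymmetric contributions then package exactly into the integral bracket $\{h,\phi\}_{\int_{\mathcal T}}(\mu_t)$ of the thermodynamic bracket $\mathcal T$ of the statement, giving the first summand of the claimed formula.

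For the topological term I would rewrite $P^\sharp(\gamma)=\frac{\dd\mu_t}{\dd P}\,\mu^\sharp_t(\gamma)$ using $C^\infty$-linearity of the musical isomorphism, and observe that $\mu^\sharp_t(\gamma)$ is $\mu_t$-divergence-free, since $\div_{\mu_t}\mu^\sharp_t(\gamma)=\curl_{\mu_t}\mu^\sharp_t(\gamma)=\mu^\sharp_t(\dd\gamma)=0$ as $\gamma$ is closed. Divergence-freeness makes $\mu^\sharp_t(\gamma)$ antisymmetric in $L^2(\mu_t)$, which lets me transfer the derivation off the Radon--Nikodym factor $\frac{\dd\mu_t}{\dd P}$ and onto $\phi$, producing $\metric{\frac{\dd\mu_t}{\dd P}}{\mu^\sharp_t(\gamma)[\phi]}_{\mu_t}$ as the second summand. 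I expect the main obstacle to be the careful bookkeeping of these integration-by-parts and Stokes steps — justifying that every boundary contribution vanishes and correctly routing the single derivative in the topological term through the antisymmetry of $\mu^\sharp_t(\gamma)$ — rather than any conceptual difficulty. The KL-divergence computation given earlier serves as a consistency check: taking $\phi=\log\frac{\dd\mu_t}{\dd P}=-h$ collapses the antisymmetric bracket ($\{h,h\}_\A=0$) and recovers $-\half\int\{h,h\}_\SS\,\mu_t$.
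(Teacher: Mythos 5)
Your proposal is correct and follows essentially the same route as the paper's own proof: pair $\frac{\delta F}{\delta\mu_t}$ against the continuity-equation form of the Fokker--Planck flow, integrate by parts via Stokes to produce $\half\{h,\phi\}_\SS-\{h,\phi\}_\A$ and package it into the integral thermodynamic bracket, then handle the obstruction term by writing $P^{\sharp}(\gamma)=\frac{\dd\mu_t}{\dd P}\,\mu_t^{\sharp}(\gamma)$ and exploiting that $\mu_t^{\sharp}(\gamma)$ is $\mu_t$-preserving to move the derivative onto $\frac{\delta F}{\delta\mu_t}$. Your explicit justification that $\div_{\mu_t}(\mu_t^{\sharp}(\gamma))=\mu_t^{\sharp}(\dd\gamma)=0$ and your KL consistency check match the paper's reasoning, so there is nothing to add.
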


\begin{proof} Differentiating we have

$$ \frac{\dd }{ \dd t} F(\mu_t) = \int \frac{\delta F}{\delta \mu_t} \frac{\partial \mu_t}{\partial t} = 
\int  \frac{\delta F}{\delta \mu_t} \div_{\mu_t} \left(   X^{\A  }_{\log \frac{\dd P}{\dd \mu_t}}-\half X^{\SS  }_{\log \frac{\dd P}{\dd \mu_t}}\right)\mu_t.$$
If $\frac{\delta F}{\delta \mu_t}$ is compactly supported, or more generally provided Stokes theorem holds,
we have 
$$ \frac{\dd }{ \dd t} F(\mu_t) = -
\int  \left( X^{\A  }_{\log \frac{\dd P}{\dd \mu_t}}\left( \frac{\delta F}{\delta \mu_t}  \right)-\half X^{\SS  }_{\log \frac{\dd P}{\dd \mu_t}} \left( \frac{\delta F}{\delta \mu_t}  \right)\right)\mu_t.$$
Hence
$$ 
\frac{\dd }{ \dd t} F(\mu_t) = - \left\{\log \frac{\dd P}{\dd \mu_t}, \frac{\delta F}{\delta \mu_t} \right\}_{\int_{\A}}(\mu_t) + \half \left\{\log \frac{\dd P}{\dd \mu_t} ,\frac{\delta F}{\delta \mu_t} \right\}_{\int_\SS} (\mu_t) ,$$
which can be expressed as
$$ \frac{\dd }{ \dd t} F(\mu_t) = \left\{\log \frac{\dd P}{\dd \mu_t}, \frac{\delta F}{\delta \mu_t} \right\}_{\int_{\mathcal T}}(\mu_t) $$
via the thermodynamic bracket  $\mathcal T= \SS/\sqrt 2  -\A$ of the diffusion.

For the topological obstruction contribution,
note  that since $\mu_t^{\sharp}(\gamma)$ is $\mu_t$ preserving,
$$ -\int  \frac{\delta F}{\delta \mu_t} 
\mu_t^{\sharp}(\gamma)\left(\frac{\dd \mu_t}{\dd P} \right) \mu_t =
\int \mu_t^{\sharp}(\gamma) \left( \frac{\delta F}{\delta \mu_t}\right) \frac{\dd \mu_t}{\dd P} \mu_t .$$

Note that when $F \defn \mathrm{KL}(\cdot \| P)$ we have 
$$ \frac{\delta \mathrm{KL}(\cdot \| P) }{\delta \mu_t}= \log \frac{\dd \mu_t}{\dd P}= - \log \frac{\dd P}{\dd \mu_t},$$
so we recover the formula for the rate of change of KL. 
\end{proof}

\subsection{Underdamped Langevin on Manifolds}\label{sec:underdamped Langevin manifolds}

\begin{theorem}
Suppose $(\A,\mu_{T\M})$ is a Langevin pair. 
If we choose the noise fields to be the vertical fields $Y_i\defn \mathcal V\circ X_i \circ \pi$ for $X_i \in \mathfrak X(\M)$, then the $\A$-diffusion generated by $e^{-H}\mu_{T\M}$, with $H \defn \pi^*V + \half \| \cdot \|^2$, is
\begin{equation*}
\diff (q_t,v_t) = \underbrace{X_{H}^\A(q_t,v_t)\diff t}_{\A-\text{Hamiltonian dynamic}}- \underbrace{\frac{\beta}{2} \metric{X_i(q_t)}{v_t}_{q_t}\mathcal V ( X_i(q_t)) \diff t}_\text{vertical kinetic Dissipation} +\underbrace{\mathcal V ( X_i(q_t)) \circ \diff W_t}_\text{vertical randomness},
\end{equation*}
or in tangent-lifted coordinates
 \begin{equation*} 
\diff (q_t,v_t) = \Big( X_{H}^\A(q_t,v_t)-\frac{\beta}{2}M(q_t)v_t \Big) \diff t +\sigma(q_t) \circ \diff W_t,
\end{equation*} 
where $M(q)v=M(q)_{rj}v^j \partial_{v^r} \defn (\sigma \sigma^{\top}\MM(q))_{rj}v^j\partial_{v^r}$ and $\sigma \defn \sigma_{ji}\partial_{v^j}  \defn (X_i)^j\partial_{v^j}$,
and
 $\MM_{ij}(q) = \metric{\partial_{x^i}}{\partial_{x^j} }_q$.
In particular $(\Pi,\omega^{n}_\flat)$ is a Langevin pair, where $\Pi$ is the Poisson 2-vector field associated to $\omega_\flat$.
%Moreover  the  Hamiltonian noise fields  $Y_i \defn X_{U_i\circ \pi}$ for $U_i: \M \to \R$, are vertical, $Y_i =-\mathcal V \circ \nabla U_i \circ \pi$, and
%$Y_i(H) =
%- \diff U_i \circ \partial \pi (X_T)$.

% \eqref{eq:SDE-ham-noise} may be written (globally) as
%\begin{equation*} 
%\diff (q_t,v_t) = \Big( X_{H}(q_t,v_t)-\frac{\beta}{2} \diff U_i\big( \tang \pi (X_T)\big)\mathcal V\big( \nabla U_i (q_t)\big)\Big) \diff t- \mathcal V \big( \nabla U_i (q_t)\big) \circ \diff W_t^i
%\end{equation*} 
 \end{theorem}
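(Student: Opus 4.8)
The plan is to specialise the general $\A$-diffusion \eqref{eq:modular-diffusion} to the volume manifold $(\F,\mu_\F)$ with $\F = T\M$, and to show that the two defining properties of a Langevin pair are exactly what is needed to annihilate the two reference-measure terms. Taking $\mu_\M = \mu_\F$ in \eqref{eq:modular-diffusion}, the $\A$-diffusion reads
$$\diff Z_t = \left( X_H^\A + \beta^{-1} X_\A^{\mu_\F} - \half\beta\, Y_i(H)\, Y_i + \half\, \div_{\mu_\F}(Y_i)\, Y_i \right)\diff t + Y_i \circ \diff W_t^i.$$
First I would apply the first Langevin-pair condition: since $X_f^\A$ is $\mu_\F$-preserving for \emph{every} function $f$, the modular vector field $X_\A^{\mu_\F}:f\mapsto \div_{\mu_\F}(X_f^\A)$ vanishes identically, killing the $\beta^{-1}X_\A^{\mu_\F}$ term. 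Next I would use horizontality of $\mu_\F$: each noise field $Y_i = \mathcal V\circ X_i\circ\pi$ is the vertical lift of a basic field, so $\L_{Y_i}\mu_\F = 0$, i.e.\ $\div_{\mu_\F}(Y_i)=0$, which removes the $\half\div_{\mu_\F}(Y_i)Y_i$ term. What survives is the $\A$-Hamiltonian drift, a kinetic dissipation term, and the vertical noise.

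The single genuine computation is the evaluation of $Y_i(H)$. Because $Y_i$ is vertical it annihilates the basic potential $\pi^*V$, so only $\half\|\cdot\|^2$ contributes. Writing the vertical lift in tangent-lifted coordinates as $\mathcal V(X_i(q)) = (X_i(q))^j\partial_{v^j}$ — note this is independent of the fibre coordinate $v$ — and differentiating $\half\MM_{kl}(q)v^kv^l$ along the fibre direction, I would obtain $Y_i(H) = \MM_{jl}(q)(X_i(q))^jv^l = \metric{X_i(q)}{v}_q$. Substituting into the surviving terms yields the coordinate-free equation \eqref{global-Vertical-SDE}. To pass to \eqref{Vertical-SDE} I set $\sigma_{ji}\defn (X_i)^j$, so that $\mathcal V(X_i)=\sigma_{ji}\partial_{v^j}$, and compute $\metric{X_i}{v}_q\,\mathcal V(X_i) = (\sigma_{ki}\MM_{kl}v^l)(\sigma_{ri}\partial_{v^r}) = (\sigma\sigma^{\top}\MM)_{rl}v^l\partial_{v^r} = M(q)v$, which is precisely the stated dissipation matrix and exhibits the fluctuation--dissipation relation $M = \sigma\sigma^{\top}\MM$.

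For the final assertion that $(\Pi,\omega_\flat^n)$ is a Langevin pair, I would check the two defining properties directly. The first is Liouville's theorem: since $X_f^\Pi$ is the Hamiltonian vector field of $f$ for $\omega_\flat$, we have $\L_{X_f^\Pi}\omega_\flat = \diff\, i_{X_f^\Pi}\omega_\flat = \diff\diff f = 0$ using $\diff\omega_\flat = 0$, hence $\L_{X_f^\Pi}\omega_\flat^n = 0$ and every $X_f^\Pi$ preserves the symplectic measure. The second, horizontality, follows from the local form $\omega_\flat^n = |\MM|\,\diff q\,\diff v$: for a vertical lift $\mathcal V(X\circ\pi) = (X(q))^j\partial_{v^j}$ one has $\div_{\omega_\flat^n}(\mathcal V(X\circ\pi)) = \frac{1}{|\MM|}\partial_{v^j}\!\big(|\MM|\,(X(q))^j\big) = \partial_{v^j}(X(q))^j = 0$, since both $|\MM|$ and $(X(q))^j$ depend only on $q$.

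I expect the main subtlety to be bookkeeping rather than any real obstacle: one must apply horizontality only to the vertical lifts $\mathcal V(X_i\circ\pi)$, for which the symplectic divergence genuinely vanishes, and not to arbitrary fibre-dependent vertical fields such as $v^1\partial_{v^1}$, whose divergence is nonzero. Everything else is a direct specialisation of the completeness recipe \eqref{eq:modular-diffusion} together with the elementary identity $\metric{X_i}{v}_q = \MM_{jl}(X_i)^jv^l$.
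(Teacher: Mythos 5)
Your proposal is correct and follows essentially the same route as the paper's own proof: specialise the $\A$-diffusion recipe, use the two Langevin-pair conditions to annihilate the modular term $X_\A^{\mu_\F}$ and the divergence term $\div_{\mu_\F}(Y_i)Y_i$, compute $Y_i(H)=\metric{X_i(q)}{v}_q$ via the vertical lift, pass to tangent-lifted coordinates to obtain $M=\sigma\sigma^{\top}\MM$, and verify horizontality of $\omega_\flat^n$ by the same local computation $\partial_{v^j}\big(|\MM|(X(q))^j\big)=0$. Your additions — the explicit Liouville check $\L_{X_f^\Pi}\omega_\flat=\dd\, i_{X_f^\Pi}\omega_\flat=\dd\dd f=0$, and the caveat that horizontality is applied only to fibre-constant vertical lifts rather than arbitrary vertical fields such as $v^1\partial_{v^1}$ — are points the paper leaves implicit, and both are sound.
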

 \begin{proof}
Recall the definition of the vertical lift $ \mathcal V: v_q \mapsto \ver_{v_q}v_q \in T\F $, with $\ver_{v_q}v_q : f \mapsto \frac{\diff }{\diff t } f(q, v_q+tv_q)|_{t=0}$ for any $f\in C^1(\F)$.
For example if $\F = \R^n \times \R^\ell$ is a vector bundle of $\M = \R^n$,  this is just the directional derivative of $f$ at $(q,v_q)$ in the direction $(0,v_q)$, $(0,v) \cdot \nabla f(q,v) = v^k \partial_{v^k} f (q,v)$, or $\mathcal V = v^k \partial_{v^k}$, where $(x^s,v^j)$ are coordinates on $\F$.
If $X \in \mathfrak X(\M)$ is a vector field,  with local expansion $X=X^k(x) \partial_{x^k}$, then its composition with the canonical vector field $\mathcal V \circ X \circ \pi\in \mathfrak X(T\F)$
 is
$\mathcal V \circ X \circ \pi(x,v)=X^k(x) \partial_{v^k}$. 
Then  if $\MM$ is a vector bundle Riemannian metric on $\F$, and $T$ is the associated kinetic energy we find
\begin{align*}
Y_i(H)(x,v)&=
 X^k_i(x) \partial_{v^k}\big( V(x)+T(x,v))= X^k_i(x) \partial_{v^k}\big( \half \MM _{rj}(x) v^rv^j)
 \\
 &= X^k_i(x)\MM _{jk}(x)v^j = \metric{Y_i(x)}{v}_x.
\end{align*} 

Now consider $\F = T\M$. The above derivation shows we  can also write
$$ Y_i(H)(x,v) = \metric{X_i(x)}{v}_x,$$
from which \eqref{global-Vertical-SDE} follows.
Moreover, setting $\sigma_{ji} \defn X^j_i$, so $Y_i(x,v) = \sigma_{ji}(x)\partial_{v^j}$ where $v^j$ are the tangent-lifted coordinates \cite{Abraham:2008}.
 We have 
\begin{align*}
Y_i(H)Y_i(x,v)
 &= \metric{X_i(x)}{v}_x \ver_{X_i(x)}(X_i(x))  = X^k_i(x)\MM _{jk}(x)v^j X^r_i(x)\partial_{v^r}\\
& = \sigma_{ki}(x)\MM _{jk}(x)v^j \sigma_{ri}(x)\partial_{v^r} = \big( \sigma \sigma^{\top}(x) \big)_{rk}\MM _{kj}(x)v^j\partial_{v^r},
\end{align*}
and  the local expression \eqref{Vertical-SDE} follows.
Moreover  the symplectic measure is indeed horizontal (this fact may be traced back to the fact that the Liouville 1-form is horizontal). 
%Indeed let $\omega$ be any symplectic form generated by a Liouville form $\Theta$, in particular $i_Y \Theta =0$ for any vertical field  $Y$ \cite{alekseevsky:1994}. 
%Hence 
%$$ \L_Y\omega^n = n \omega^{n-1} \wedge \L_Y \diff \Theta = n \omega^{n-1} \wedge \diff \L_Y  \Theta =
% n \omega^{n-1} \wedge \diff \left( \diff i_Y \Theta + i_Y \diff \Theta \right)=0.$$
 Indeed, 
 in tangent-lifted coordinates $\omega_\flat^n = |\MM| \diff x  \diff v$, so 
 locally the divergence of $Z \in \mathfrak X(T\M)$
is $\div_{\omega_\flat}(Z)= \frac1{|\MM| } \partial_{x^i}\big(|\MM| Z^i \big)+ \partial_{v^j} \overline Z^j$, and from the previous local expressions we see $\div_{\omega_\flat}(Y_i)=0$, and thus $\omega^n_\flat$ is horizontal.

Finally we mention that if the noise vector fields are chosen to be $\Pi$-Hamiltonian vector fields associated to ``noise'' Hamiltonians  $U_i: \M  \to \R$, then $Y_i \defn X_{U_i\circ \pi}^\Pi =-\mathcal V \circ \nabla U_i \circ \pi$, i.e.,  they are the vertical lift of the Riemannian gradients,
and
$Y_i(H) = \omega_\flat\big(X_H^\Pi, X_{U_i\circ \pi} ^\Pi  \big) = -X_H^\Pi(U_i\circ \pi)
= - \diff U_i \circ \tang \pi (X_H^\Pi)=
- \diff U_i \circ \tang \pi (X_T^\Pi)$
since $X_V$ is vertical.

 \end{proof}

\section{Unpublished Result}\label{sec:unpublished-results}

The following results were proven in the thesis of one of the authors \cite{barp2020bracket}, and 
 are being submitted as part of an article discussing the intrinsic geometry of smooth measures and its relations to various fields of mathematics.
When this latter paper will be available online,
this section will be erased and the mentions of it in the main  article will be replaced by citations, but in the mean time, for completeness, we include the characterisation \refsec{thrm:Dyn-is-loc-curl} of measure-preserving dynamical systems and its proof that we will
use in the main article.

Denote by $\Omega_\Or^k(\M)$  the space of twisted differential $k$-forms, that is differential $k$-forms taking value in the orientation bundle.
In particular the smooth positive measure $P$ can be identified with  a twisted  form of top rank.
For any  $X\in  \mathfrak X^k(\M)$, using $( \mathfrak X^\ell (\M))^* \cong \Omega^\ell(\M)$,
we define the right interior product by  $i_XP (A) \defn \metric{P}{ X \wedge A}_*$ for any $A\in  \mathfrak X^{n-k}(\M) $.
This induces the $C^{\infty}(\M)$-linear musical isomorphism $P ^\flat :  \mathfrak X^k(\M) \rightarrow \Omega_\Or^{n-k}(\M)$ by $P ^\flat (X) \defn i_{X}P $, and we denoted its inverse by $P ^\sharp$, $P^{\sharp} \circ P^{\flat} =\Id$ \cite[Sec. 2.5]{dufour2006poisson}.

Note that since $P^{\flat}(X)$ is twisted,
it is a form that takes value in the orientation line bundle.
Since the orientation bundle is flat,
we can find  transition functions that   are locally constant (in fact these are given by the sign of the Jacobian of the transition functions of $\M$).
Hence the exterior derivative $\dd$ on differential forms extend to an operator on twisted forms, which we will  use in the definition of $\curl_P$ below. Moreover $\dd $ generates a
 canonical twisted de Rham complex  by extending $\dd $ using any trivialisation of the orientation line bundle induced by a trivialisation of $\M$, as explained in
section 7 \cite{bott2013differential} (on orientable manifolds this reduces do the standard de Rham complex).
Using this (extended) exterior derivative, we define the $P$-rotationnel as
$$ \curl_P = P^{\sharp} \circ \dd \circ P^{\flat},$$
which satisfies $\curl_P \circ \curl_P=0$ since $\dd \circ \dd =0$.
In particular when applied to vector fields the $P$-rotationnel acts as the divergence operator $\curl_P= \div_P:\mathfrak X(\M) \to C^{\infty}(\M)$, which follows from
$$ \L_XP = \dd i_X P = P^{\flat} \curl_P(X) = \curl_P(X) P, $$
together with the definition of $\div_P(X)$ as the function satisfying $\div_{P }(X) P  = \mathcal L_X P$.
If $f$ is a function, observe that $(fP)^{\flat} = f P^{\flat}$, and so if $f$ is non-vanishing, $(fP)^{\sharp} = P^{\sharp} \circ \frac1 f$.
Hence
\begin{equation}
 \curl_{fP} = P^{\sharp} \circ \frac1 f \circ \dd \circ f \circ P^{\flat} \defn
P^{\sharp} \circ \dd_f \circ P^{\flat}, \qquad \text{or} \qquad \curl_{Q} = P^{\sharp} \circ \dd_{\frac{\dd Q}{\dd P}} \circ P^{\flat},
\end{equation}
where $\dd_f \defn \dd + \dd \log | f| \wedge$ is the  distorted  de Rham derivative. In particular, the $P$-rotationnel does not depend on the normalisation constant of $P$, an important requirement in many statistical applications, where the target distribution or statistical model is only known up to normalisation.
Importantly, we have the following key result showing the homology groups defined by the boundary operator $\curl_P$ (since $\curl_P \circ \curl_P =0$) are isomorphic to the twisted de Rham cohomology groups. As usual we denote by $[\cdot]$ the equivalence classes.
\begin{theorem} \label{thrm:Dyn-is-loc-curl}
The isomorphism $P^{\flat}$ descends to an isomorphism between the homology groups $\mathcal H_P^\ell(\M)$ of $\curl_P$ and the twisted de Rham cohomology groups $\H^{n-\ell}_{dR}(\M)$. Hence
$$ \Dyn(P )  \cong \curl_P \left( \mathfrak X^2(\M)\right) \oplus P^{\sharp} \left( \H^{n-1}_{dR} \left(\M \right) \right),  $$
and any $P$-preserving dynamics will be globally the $P$-rotationnel of some $\A \in \mathfrak X^2(\M)$ iff the $(n-1)$ de twisted Rham cohomology is trivial.
Moreover, if $U \subset \M$ is an open subset, then $\curl_P |_U = \curl_{P |_U}$.
Hence the set of $P $-preserving dynamics is precisely the set of locally curl vector fields $$ \Dyn(P )  = \text{Curl}_{loc}(P )$$
where $ \text{Curl}_{loc}(P ) =\{ X \in \mathfrak X(\M) : \forall q, \text{there is a neighbourhood } U  \text{ and } \A \in \mathfrak X^2(U) { s.t., }  X = \curl_{P|_U}(\A)   \}$.
\end{theorem}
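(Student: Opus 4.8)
The plan is to exploit the single structural fact that $P^{\flat}$ is a degree-reversing isomorphism of complexes intertwining $\curl_P$ with $\dd$, and then read off every assertion as a consequence. The defining identity $\curl_P = P^{\sharp}\circ\dd\circ P^{\flat}$ rearranges (using $P^{\flat}\circ P^{\sharp}=\Id$) to $P^{\flat}\circ\curl_P = \dd\circ P^{\flat}$, so the $C^{\infty}(\M)$-linear isomorphisms $P^{\flat}:\mathfrak X^{\ell}(\M)\xrightarrow{\sim}\Omega^{n-\ell}_{\Or}(\M)$ assemble into an isomorphism between the boundary complex $(\mathfrak X^{\bullet}(\M),\curl_P)$ and the twisted de Rham complex $(\Omega^{\bullet}_{\Or}(\M),\dd)$. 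First I would record this and deduce the induced isomorphism on (co)homology: $P^{\flat}$ carries $\curl_P$-cycles bijectively onto $\dd$-cocycles and $\curl_P$-boundaries bijectively onto $\dd$-coboundaries, hence descends to $\H_P^{\ell}(\M)\cong\H^{n-\ell}_{dR}(\M)$, with inverse induced by $P^{\sharp}$.

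For the decomposition of $\Dyn(P)$, I would start from the fact (established earlier in the excerpt) that $\curl_P$ restricted to vector fields is $\div_P$, so $\Dyn(P)=\ker(\div_P)=\ker(\curl_P:\mathfrak X^1(\M)\to C^{\infty}(\M))$. The defining exact sequence of $\H^1_P(\M)$ reads $0\to\curl_P(\mathfrak X^2(\M))\to\ker(\curl_P:\mathfrak X^1(\M)\to C^{\infty}(\M))\to\H^1_P(\M)\to0$; since these are $\R$-vector spaces it splits, and choosing representatives $P^{\sharp}(\gamma)$ for the classes $[\gamma]\in\H^{n-1}_{dR}(\M)$ (the image under the isomorphism above at $\ell=1$) yields $\Dyn(P)\cong\curl_P(\mathfrak X^2(\M))\oplus P^{\sharp}(\H^{n-1}_{dR}(\M))$. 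The stated ``iff'' is then immediate: every $P$-preserving field is globally $\curl_P(\A)$ exactly when the complementary summand vanishes, i.e.\ when $\H^1_P(\M)\cong\H^{n-1}_{dR}(\M)=0$.

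The remaining claims are local. I would first check $\curl_P|_U=\curl_{P|_U}$ by noting that each of the three factors is natural under restriction along an open inclusion $\iota_U:U\hookrightarrow\M$: flattening is pointwise, so $P^{\flat}(X)|_U=(P|_U)^{\flat}(X|_U)$; the exterior derivative commutes with pullback, $(\dd\omega)|_U=\dd(\omega|_U)$; and $P^{\sharp}$ restricts as the inverse of $P^{\flat}$. Composing gives the identity, and in particular $\div_P(X)|_U=\div_{P|_U}(X|_U)$. Then $\Dyn(P)=\mathrm{Curl}_{loc}(P)$ follows in two steps. For $\mathrm{Curl}_{loc}(P)\subseteq\Dyn(P)$, if $X|_U=\curl_{P|_U}(\A)$ near every point then $\div_P(X)|_U=\curl_{P|_U}(\curl_{P|_U}(\A))=0$ by $\curl\circ\curl=0$ together with the locality just proved, so $\div_P(X)\equiv0$. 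For the reverse inclusion I would invoke a twisted Poincaré lemma: $\div_P(X)=0$ means the twisted $(n-1)$-form $P^{\flat}(X)$ is $\dd$-closed, so on a contractible coordinate ball $U$ the Poincaré lemma supplies $\beta\in\Omega^{n-2}_{\Or}(U)$ with $\dd\beta=P^{\flat}(X)|_U$; setting $\A\defn(P|_U)^{\sharp}(\beta)\in\mathfrak X^2(U)$ gives $\curl_{P|_U}(\A)=(P|_U)^{\sharp}(\dd\beta)=X|_U$.

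The main obstacle, and the only place needing genuine care, is this last local exactness step: it rests on the Poincaré lemma for \emph{twisted} forms, so one must use that the orientation bundle is flat and hence trivial over a ball to identify the twisted and untwisted complexes in positive degree before applying the ordinary lemma. In the degenerate range $n-1=0$ the top obstruction $\H^0_{dR}$ need not vanish on a connected chart, so the local-curl statement is understood for $n\ge2$, where $n-1\ge1$ and the lemma applies. Everything else is formal homological algebra transported across the isomorphism $P^{\flat}$.
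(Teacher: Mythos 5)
Your proof is correct and follows essentially the same route as the paper's: the intertwining relation $P^{\flat}\circ \curl_P = \dd \circ P^{\flat}$ gives the homology isomorphism, the $\Dyn(P)$ decomposition comes from splitting the resulting exact sequence of vector spaces, naturality of $P^{\flat}$, $\dd$ and $P^{\sharp}$ under restriction gives $\curl_P|_U = \curl_{P|_U}$, and the twisted Poincar\'e lemma supplies the local potential $\A = (P|_U)^{\sharp}(\beta)$, exactly as in the paper's appendix. Your explicit caveat that the local-curl statement $\Dyn(P)=\text{Curl}_{loc}(P)$ requires $n\ge 2$ (a closed twisted $0$-form is locally constant but not exact, e.g.\ $\partial_x$ on $(\R,\dd x)$) is a genuine refinement the paper's proof leaves implicit, not a deviation.
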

\begin{proof}
The $P $-derivative 
$\curl_P  \defn P ^{\sharp} \circ \diff \circ P ^{\flat} : \mathfrak X^{k}(\M)  \to \mathfrak X^{k-1}(\M)$ is a vector space homomorphism, satisfying $\curl_P  \circ \curl_ P  =0 $, that it is a boundary operator on the chain complex of $k$-multi-vector fields, and thus 
$\text{Im} \left( \curl_P  : \mathfrak X^{\ell+1}(\M) \to \mathfrak X^{\ell}(\M) \right)$ is a linear subspace of 
$ \ker \left( \curl_P : \mathfrak X^{\ell}(\M) \to \mathfrak X^{\ell-1}(\M) \right)$. 
We can then define the $\ell^{th}$ Holomology group
$$ \mathcal H_P^{\ell} (\M) \defn \frac{\ker \left( \curl_P : \mathfrak X^{\ell}(\M) \to \mathfrak X^{\ell-1}(\M) \right)}{\text{Im} \left( \curl_P  : \mathfrak X^{\ell+1}(\M) \to \mathfrak X^{\ell}(\M) \right)},$$
and in particular the first one provides information on $P$-preserving vector fields
 (using $\curl_P  (X) = \div_P  (X)$)
 $$ \mathcal H_P^{1} (\M) \defn \frac{\ker \left( \div_P : \mathfrak X(\M) \to C^{\infty}(\M)\right)}{\text{Im} \left( \curl_P  : \mathfrak X^{2}(\M) \to  \mathfrak X(\M) \right)}.$$
 Notice that the map $[\cdot ] \circ P^{\flat} :  \mathfrak X^\ell(\M)  \to \H^{n-\ell}_{dR}(\M)$ descends to a map $ \mathcal H_P^{\ell} (\M)  \to \H^{n-\ell}_{dR}(\M)$, since 
 $[\cdot ] \circ P^{\flat}(X+\curl_P(Y))=
 [ P^{\flat}(X)+\dd i_YP]= [P^{\flat}(X)]$.
 The map is surjective since $P^{\flat}$ is, and injective since 
 $ [P^{\flat}(X)] =  [P^{\flat}(Y)]$ $\implies$ $[P^{\flat}(X-Y)]=0 \implies
 P^{\flat}(X-Y) = \dd \alpha \implies X-Y= \curl_P(Z) \implies [X]=[Y]$.  
In particular, $\H^{n-1}_{dR}(\M)$ is trivial iff  $\mathcal H_P^{1} (\M)$ is, in which case every divergence-free vector field is the $\curl_P$ of a bi-vector field $\A$.
 
In general, we can still use Poincaré lemma (or Volterra theorem, as it was proved by Vito Volterra \cite{volterra1889generalisation}) and the properties of $\curl_P$ to show that   any $P $-preserving vector field is  locally a curl vector field.  
Denoting the inclusion by $\iota _U:U \hookrightarrow \M$, we have 
$   \iota _U^*(P ^{\flat}(Y)) = \iota _U^*(i_Y P ) = i_{Y_U} \iota _U^*P =  i_{Y_U} P_{U} = P_{U}^{\flat}(Y_U) $, where $P_{U},Y_U$ denote their restriction to $U$, and we have used proposition 7.4.10 \cite{abraham2012manifolds};
hence $ \iota _U^*\circ P ^{\flat} =  P_{U}^{\flat} \circ |_U$. 
Setting $Y = P^{\sharp}(\alpha)$
this yields
$P^{\sharp}(\alpha) |_U =P_U^{\sharp}(\iota^*_U \alpha)$ for any twisted form $\alpha$.
Hence 
$$|_U \circ \curl_P  \defn |_U \circ P^{\sharp} \circ \dd \circ P^{\flat} =
  P^{\sharp}_U \circ\iota_U^* \circ \dd \circ P^{\flat}=
  P^{\sharp}_U  \circ \dd \circ\iota_U^* \circ P^{\flat}=
   P^{\sharp}_U  \circ \dd \circ P^{\flat}_U 
 =\curl_{P_U} \circ |_U.$$
Thus,  $\curl_P  (Y)=0$
iff $P ^{\sharp} \circ \diff \circ P ^{\flat} (Y)=0$ iff $ \diff \circ P ^{\flat}(Y)=0$ (since $P ^{\sharp}$ is a linear isomorphism). 
By Poincar\'e Lemma, this holds iff around any point there is an open  neighbourhood $U$ over which $ P ^{\flat}(Y) |_{U} = \diff \alpha$ for some twisted ($\dim(\M)-2)$-form $\alpha$ on $U$.
Then 
$  \diff \alpha= \iota _U^*(P ^{\flat}(Y)) =  P_{U}^{\flat}(Y_U) $,  
 and  $P_{U}^{\flat}(Y|_U) = \diff \alpha$ iff  
$Y_U = P ^{\sharp}_U \diff \alpha = \curl_{P_U}  (\A)$ where $\A \defn P ^{\sharp}_U (\alpha) $ is a 2-vector field on $U$.

Finally we also mention that
when $P=\dd x$ is the Lebesgue measure on Euclidean space, $\Dyn(\dd x) = \curl_{\dd x} \left(\mathfrak X^2(\M) \right)$ was essentially already proved by Vito Volterra \cite{volterra1889generalisation}, 
that the statement $\Dyn(P )  \cong \curl_P \left( \mathfrak X^2(\M)\right) \oplus P^{\sharp} \left( \H^{n-1}_{dR} \left(\M \right) \right)$ 
appears in implicit form (essentially written as
$P^{\flat}\left(\Dyn(P )\right) = \dd \Omega^{n-2}(\M) \oplus H^{n-1}(\M)$) in \cite[Thm. 6]{mclachlan2002splitting} under the assumption that $\M$ is orientable,
and that 
 by Poincaré duality if $\M$ has a finite good cover (in which case the cohomology groups are finite dimensional) we may alternatively work with the first compactly supported de Rham  cohomology group (proposition 5.3.1 and theorem 7.8 \cite{bott2013differential}).
% \textcolor{red}{Apparently the top de-Rham cohomology is trivial for connected, orientable non-compact manifolds.}
%\AB{It should be the $\dim \M-1$ cohomology }
\end{proof}
Note that  by Poincaré duality if $\M$ has a finite good cover  we may alternatively work with the first compactly supported de Rham  cohomology group.
The de Rham cohomology groups may be very large, though they must be finite dimensional when $\M$ is compact.
Moreover, in that case,  they are isomorphic to the vector spaces of harmonic forms, and the following results follows:
\begin{corollary} If $\M$ is compact and orientable, then
$$
\Dyn(P )  \cong \curl_P \mathfrak X^2(\M) \oplus P^{\sharp} \left( \mathcal H^{n-1} \left(\M \right) \right),
$$
where $\mathcal H^{n-1}$ is the space of harmonic $n-1$-forms associated to an arbitrary Riemannian metric. In other words, any $P$-preserving vector field on a compact orientable manifold has the form
$X = \curl_P(\A) + P^{\sharp}(\gamma)$.
\end{corollary}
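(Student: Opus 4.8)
The plan is to deduce the statement directly from the already-established global decomposition of Theorem~\ref{thrm:Dyn-is-loc-curl} and then rewrite its cohomological summand in terms of harmonic representatives via classical Hodge theory. First I would invoke Theorem~\ref{thrm:Dyn-is-loc-curl}, which holds on an arbitrary manifold and provides both the isomorphism $\mathcal H_P^{1}(\M) \cong \H^{n-1}_{dR}(\M)$ induced by $P^{\flat}$ and the resulting decomposition
$$ \Dyn(P) \cong \curl_P\left(\mathfrak X^2(\M)\right) \oplus P^{\sharp}\left(\H^{n-1}_{dR}(\M)\right). $$
Thus the only remaining task is to identify the twisted de Rham group $\H^{n-1}_{dR}(\M)$ with a space of harmonic forms.

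Second, I would exploit orientability. Since $\M$ is orientable, the orientation line bundle is globally trivializable, so a choice of trivialization identifies the twisted de Rham complex $(\Omega^{\bullet}_{\Or}(\M), \dd)$ with the ordinary de Rham complex $(\Omega^{\bullet}(\M), \dd)$; hence $\H^{n-1}_{dR}(\M)$ is just the ordinary $(n-1)$-st de Rham cohomology of $\M$. I would then equip the compact $\M$ with an arbitrary Riemannian metric (one exists because compact manifolds are paracompact) and apply the Hodge decomposition theorem: on a compact orientable Riemannian manifold, every de Rham cohomology class contains a \emph{unique} harmonic representative, yielding a linear isomorphism $\H^{n-1}_{dR}(\M) \cong \mathcal H^{n-1}(\M)$, where $\mathcal H^{n-1}(\M)$ is the space of $(n-1)$-forms satisfying $\dd\gamma = 0$ and $\dd\star\gamma = 0$, i.e.\ the harmonicity (``Maxwell'') condition appearing in Theorem~\ref{thm:diffusion-compact}.

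Finally, I would compose the two isomorphisms: substituting $\H^{n-1}_{dR}(\M) \cong \mathcal H^{n-1}(\M)$ into the decomposition above, and using that $P^{\sharp}$ is a $C^\infty(\M)$-linear isomorphism, gives
$$ \Dyn(P) \cong \curl_P\left(\mathfrak X^2(\M)\right) \oplus P^{\sharp}\left(\mathcal H^{n-1}(\M)\right), $$
so every $P$-preserving vector field has the form $X = \curl_P(\A) + P^{\sharp}(\gamma)$ with $\A \in \mathfrak X^2(\M)$ and $\gamma$ harmonic. The genuine content is carried entirely by Theorem~\ref{thrm:Dyn-is-loc-curl} and the classical Hodge theorem, so I expect no serious obstacle; the two points that require care are (i) confirming that the orientation twist truly vanishes on an orientable $\M$, so that the twisted and ordinary complexes coincide globally rather than merely locally, and (ii) checking that the directness of the sum is preserved after passing to harmonic representatives — but this is exactly the injectivity half of $\mathcal H_P^{1}(\M) \cong \H^{n-1}_{dR}(\M)$, since a nonzero harmonic $\gamma$ is nonzero in cohomology and hence $P^{\sharp}(\gamma) \notin \curl_P(\mathfrak X^2(\M))$.
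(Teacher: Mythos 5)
Your proposal is correct and takes essentially the same route as the paper: the authors likewise obtain the corollary by combining the global decomposition of Theorem~\ref{thrm:Dyn-is-loc-curl} with the facts that orientability reduces the twisted de Rham complex to the ordinary one and that, by Hodge theory on a compact manifold, each de Rham class has a (unique) harmonic representative. Your two care points — global triviality of the orientation twist and directness of the sum after passing to harmonic representatives — are sound and are exactly what the paper leaves implicit.
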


\vskip 0.2in
\bibliographystyle{splncs04}
\bibliography{ReferencesTHMC}

\begin{thebibliography}{100}
\providecommand{\url}[1]{\texttt{#1}}
\providecommand{\urlprefix}{URL }
\providecommand{\doi}[1]{https://doi.org/#1}

\bibitem{abdulle2019accelerated}
Abdulle, A., Pavliotis, G.A., Vilmart, G.: Accelerated convergence to
  equilibrium and reduced asymptotic variance for {L}angevin dynamics using
  {S}tratonovich perturbations. Comptes Rendus Mathematique  \textbf{357}(4),
  349--354 (2019)

\bibitem{abraham2012manifolds}
Abraham, R., Marsden, J.E., Ratiu, T.S.: Manifolds, tensor analysis, and
  applications (1988)

\bibitem{Abraham:2008}
Abraham, R., Marsden, J.E.: Foundations of Mechanics. American Mathematical
  Society, second edn. (May 2008), with the assistance of {Tudor} {Ratiu} and
  {Richard} {Cushman}

\bibitem{ahn2012bayesian}
Ahn, S., Korattikara, A., Welling, M.: Bayesian posterior sampling via
  stochastic gradient fisher scoring. arXiv preprint arXiv:1206.6380  (2012)

\bibitem{akhmatskaya2009comparison}
Akhmatskaya, E., Bou-Rabee, N., Reich, S.: A comparison of generalized hybrid
  monte carlo methods with and without momentum flip. Journal of Computational
  Physics  \textbf{228}(6),  2256--2265 (2009)

\bibitem{armstrong2016coordinate}
Armstrong, J., Brigo, D.: Coordinate-free stochastic differential equations as
  jets. arXiv preprint arXiv:1602.03931  (2016)

\bibitem{arnaudon2019irreversible}
Arnaudon, A., Barp, A., Takao, S.: Irreversible langevin mcmc on lie groups.
  arXiv preprint arXiv:1903.08939  (2019)

\bibitem{arnaudon2010differentiation}
Arnaudon, M., Thalmaier, A., et~al.: The differentiation of hypoelliptic
  diffusion semigroups. Illinois Journal of Mathematics  \textbf{54}(4),
  1285--1311 (2010)

\bibitem{baez1994gauge}
Baez, J., Muniain, J.P.: Gauge fields, knots and gravity, vol.~4. World
  Scientific Publishing Company (1994)

\bibitem{bakry2013analysis}
Bakry, D., Gentil, I., Ledoux, M.: Analysis and geometry of Markov diffusion
  operators, vol.~348. Springer Science \& Business Media (2013)

\bibitem{Barbour2005}
Barbour, A., Chen, L.H.Y.: {An introduction to Stein's method}. Lecture Notes
  Series, Institute for Mathematical Sciences, National University of Singapore
  (2005)

\bibitem{barbour1988stein}
Barbour, A.D.: Stein's method and poisson process convergence. Journal of
  Applied Probability  \textbf{25}(A),  175--184 (1988)

\bibitem{barp2019hamiltonianB}
Barp, A.: Hamiltonian monte carlo on lie groups and constrained mechanics on
  homogeneous manifolds. arXiv preprint arXiv:1903.04662  (2019)

\bibitem{barp2020bracket}
Barp, A.: The bracket geometry of statistics. Ph.D. thesis, Imperial College
  London (2020)

\bibitem{Barp:2018}
Barp, A., Briol, F.X., Kennedy, A.D., Girolami, M.: {Geometry and Dynamics for
  Markov Chain Monte Carlo}. Annual Review of Statistics and its Application
  (2018). \doi{https://doi.org/10.1146/annurev-statistics-031017-100141}

\bibitem{barp2019minimum}
Barp, A., Briol, F.X., Duncan, A., Girolami, M., Mackey, L.: Minimum stein
  discrepancy estimators. In: Advances in Neural Information Processing
  Systems. pp. 12964--12976 (2019)

\bibitem{barp2019hamiltonian}
Barp, A., Kennedy, A., Girolami, M.: Hamiltonian monte carlo on symmetric and
  homogeneous spaces via symplectic reduction. arXiv preprint arXiv:1903.02699
  (2019)

\bibitem{barp2018riemannian}
Barp, A., Oates, C., Porcu, E., Girolami, M., et~al.: A riemannian-stein kernel
  method. arXiv preprint arXiv:1810.04946  (2018)

\bibitem{batrouni1986coordinate}
Batrouni, G., Kawai, H., Rossi, P.: Coordinate-independent formulation of the
  langevin equation. Journal of mathematical physics  \textbf{27}(6),
  1646--1648 (1986)

\bibitem{bell2004stochastic}
Bell, D.R.: Stochastic differential equations and hypoelliptic operators. In:
  Real and stochastic analysis, pp. 9--42. Springer (2004)

\bibitem{bell1995extension}
Bell, D.R., Mohammed, S.E.A., et~al.: An extension of hormander's theorem for
  infinitely degenerate second-order operators. Duke Mathematical Journal
  \textbf{78}(3),  453--476 (1995)

\bibitem{beskos2009mcmc}
Beskos, A., Stuart, A.: {MCMC} methods for sampling function space. In: ICIAM
  07?6th International Congress on Industrial and Applied Mathematics. pp.
  337--364. European Mathematical Society (2009)

\bibitem{betancourt2017conceptual}
Betancourt, M.: A conceptual introduction to hamiltonian monte carlo. arXiv
  preprint arXiv:1701.02434  (2017)

\bibitem{bismut1981martingales}
Bismut, J.M.: Martingales, the malliavin calculus and hypoellipticity under
  general h{\"o}rmander's conditions. Zeitschrift f{\"u}r
  Wahrscheinlichkeitstheorie und verwandte Gebiete  \textbf{56}(4),  469--505
  (1981)

\bibitem{bismut1982mecanique}
Bismut, J.M.: M{\'e}canique al{\'e}atoire. In: Ecole d'Et{\'e} de
  Probabilit{\'e}s de Saint-Flour X-1980, pp. 1--100. Springer (1982)

\bibitem{bloch1996euler}
Bloch, A., Krishnaprasad, P., Marsden, J.E., Ratiu, T.S.: The
  euler-poincar{\'e} equations and double bracket dissipation. Communications
  in mathematical physics  \textbf{175}(1),  1--42 (1996)

\bibitem{bott2013differential}
Bott, R., Tu, L.W.: Differential forms in algebraic topology, vol.~82. Springer
  Science \& Business Media (2013)

\bibitem{bou2014time}
Bou-Rabee, N.: Time integrators for molecular dynamics. Entropy
  \textbf{16}(1),  138--162 (2014)

\bibitem{bou2014metropolis}
Bou-Rabee, N., Donev, A., Vanden-Eijnden, E.: Metropolis integration schemes
  for self-adjoint diffusions. Multiscale Modeling \& Simulation
  \textbf{12}(2),  781--831 (2014)

\bibitem{bou2018geometric}
Bou-Rabee, N., Sanz-Serna, J.M.: Geometric integrators and the hamiltonian
  monte carlo method. Acta Numerica  \textbf{27},  113--206 (2018)

\bibitem{bouchet2012statistical}
Bouchet, F., Venaille, A.: Statistical mechanics of two-dimensional and
  geophysical flows. Physics reports  \textbf{515}(5),  227--295 (2012)

\bibitem{brockett1991dynamical}
Brockett, R.W.: Dynamical systems that sort lists, diagonalize matrices, and
  solve linear programming problems. Linear Algebra and its applications
  \textbf{146},  79--91 (1991)

\bibitem{brubaker2012family}
Brubaker, M., Salzmann, M., Urtasun, R.: A family of mcmc methods on implicitly
  defined manifolds. In: Artificial intelligence and statistics. pp. 161--172
  (2012)

\bibitem{Simon:2013}
Byrne, S., Girolami, M.: {Geodesic Monte Carlo on Embedded Manifolds}.
  Scandinavian Journal of Statistics, Theory and Applications  (2013).
  \doi{10.1111/sjos.12036}

\bibitem{cances2007theoretical}
Cances, E., Legoll, F., Stoltz, G.: Theoretical and numerical comparison of
  some sampling methods for molecular dynamics. ESAIM: Mathematical Modelling
  and Numerical Analysis  \textbf{41}(2),  351--389 (2007)

\bibitem{cattiaux2002hypoelliptic}
Cattiaux, P., Mesnager, L.: Hypoelliptic non-homogeneous diffusions.
  Probability Theory and Related Fields  \textbf{123}(4),  453--483 (2002)

\bibitem{chen2014stochastic}
Chen, T., Fox, E., Guestrin, C.: Stochastic gradient hamiltonian monte carlo.
  In: International conference on machine learning. pp. 1683--1691 (2014)

\bibitem{chen2019stein}
Chen, W.Y., Barp, A., Briol, F.X., Gorham, J., Girolami, M., Mackey, L., Oates,
  C., et~al.: Stein point markov chain monte carlo. arXiv preprint
  arXiv:1905.03673  (2019)

\bibitem{cheng2018sharp}
Cheng, X., Chatterji, N.S., Abbasi-Yadkori, Y., Bartlett, P.L., Jordan, M.I.:
  Sharp convergence rates for langevin dynamics in the nonconvex setting. arXiv
  preprint arXiv:1805.01648  (2018)

\bibitem{cheng2017underdamped}
Cheng, X., Chatterji, N.S., Bartlett, P.L., Jordan, M.I.: Underdamped langevin
  mcmc: A non-asymptotic analysis. arXiv preprint arXiv:1707.03663  (2017)

\bibitem{chirikjian2010information}
Chirikjian, G.S.: Information theory on lie groups and mobile robotics
  applications. In: 2010 IEEE International Conference on Robotics and
  Automation. pp. 2751--2757. IEEE (2010)

\bibitem{clark2011improving}
Clark, M.A., Jo{\'o}, B., Kennedy, A.D., Silva, P.J.: Improving dynamical
  lattice qcd simulations through integrator tuning using poisson brackets and
  a force-gradient integrator. Physical Review D  \textbf{84}(7),  071502
  (2011)

\bibitem{cong2017deep}
Cong, Y., Chen, B., Liu, H., Zhou, M.: Deep latent dirichlet allocation with
  topic-layer-adaptive stochastic gradient riemannian mcmc. In: Proceedings of
  the 34th International Conference on Machine Learning-Volume 70. pp.
  864--873. JMLR. org (2017)

\bibitem{cruzeiro2018momentum}
Cruzeiro, A.B., Holm, D.D., Ratiu, T.S.: Momentum maps and stochastic clebsch
  action principles. Communications in Mathematical Physics  \textbf{357}(2),
  873--912 (2018)

\bibitem{de1931analysis}
De~Rham, G.: Sur l'analysis situs des vari{\'e}t{\'e}s {\`a} n dimensions,
  vol.~1305. Gauthier-Villars (1931)

\bibitem{de1973varietes}
De~Rham, G.: Vari{\'e}t{\'e}s diff{\'e}rentiables: formes, courants, formes
  harmoniques, vol.~3. Editions Hermann (1973)

\bibitem{Diaconis:2013}
Diaconis, P., Holmes, S.P., Shahshahani, M.: Sampling from a manifold. Advances
  in Modern Statistical Theory and Applications: A Festschrift in honor of
  Morris L. Eaton  (2013). \doi{10.1214/12-IMSCOLL1006}

\bibitem{ding2020covariant}
Ding, M., Tu, Z., Xing, X.: Covariant formulation of non-linear langevin theory
  with multiplicative guassian white noises. arXiv preprint arXiv:2007.16131
  (2020)

\bibitem{dobson2019reversible}
Dobson, P., Fursov, I., Lord, G., Ottobre, M.: Reversible and non-reversible
  markov chain monte carlo algorithms for reservoir simulation problems. arXiv
  preprint arXiv:1903.06960  (2019)

\bibitem{duane1987}
Duane, S., Kennedy, A.D., Pendleton, B.J., Roweth, D.: {Hybrid Monte Carlo}.
  Physics Letters B  \textbf{195}(2),  216--222 (1987)

\bibitem{dufour1991rotationnnels}
Dufour, J.P., Haraki, A.: Rotationnnels et structures de poisson quadratiques.
  Comptes rendus de l'Acad{\'e}mie des sciences. S{\'e}rie 1, Math{\'e}matique
  \textbf{312}(1),  137--140 (1991)

\bibitem{dufour2006poisson}
Dufour, J.P., Zung, N.T.: Poisson structures and their normal forms, vol.~242.
  Springer Science \& Business Media (2006)

\bibitem{duncan2017nonreversible}
Duncan, A., Pavliotis, G., Zygalakis, K.: Nonreversible langevin samplers:
  Splitting schemes, analysis and implementation. arXiv preprint
  arXiv:1701.04247  (2017)

\bibitem{durmus2019analysis}
Durmus, A., Majewski, S., Miasojedow, B.: Analysis of langevin monte carlo via
  convex optimization. J. Mach. Learn. Res.  \textbf{20},  73--1 (2019)

\bibitem{durmus2017convergence}
Durmus, A., Moulines, E., Saksman, E.: On the convergence of {H}amiltonian
  {M}onte {C}arlo. arXiv preprint arXiv:1705.00166  (2017)

\bibitem{eberle2019couplings}
Eberle, A., Guillin, A., Zimmer, R., et~al.: Couplings and quantitative
  contraction rates for langevin dynamics. The Annals of Probability
  \textbf{47}(4),  1982--2010 (2019)

\bibitem{elworthy1988geometric}
Elworthy, D.: Geometric aspects of diffusions on manifolds. In: {\'E}cole
  d'{\'E}t{\'e} de Probabilit{\'e}s de Saint-Flour XV--XVII, 1985--87, pp.
  277--425. Springer (1988)

\bibitem{emery2012stochastic}
{\'E}mery, M.: Stochastic calculus in manifolds. Springer Science \& Business
  Media (2012)

\bibitem{erdogdu2018global}
Erdogdu, M.A., Mackey, L., Shamir, O.: Global non-convex optimization with
  discretized diffusions. In: Advances in Neural Information Processing
  Systems. pp. 9671--9680 (2018)

\bibitem{eyink1996hydrodynamics}
Eyink, G.L., Lebowitz, J.L., Spohn, H.: Hydrodynamics and fluctuations outside
  of local equilibrium: driven diffusive systems. Journal of Statistical
  physics  \textbf{83}(3-4),  385--472 (1996)

\bibitem{fang2014compressible}
Fang, Y., Sanz-Serna, J.M., Skeel, R.D.: Compressible generalized hybrid monte
  carlo. The Journal of chemical physics  \textbf{140}(17),  174108 (2014)

\bibitem{fokker1914mittlere}
Fokker, A.D.: Die mittlere {E}nergie rotierender elektrischer {D}ipole im
  {S}trahlungsfeld. Annalen der Physik  \textbf{348}(5),  810--820 (1914)

\bibitem{frankel2011geometry}
Frankel, T.: The geometry of physics: an introduction. Cambridge university
  press (2011)

\bibitem{franzke2015stochastic}
Franzke, C.L., O'Kane, T.J., Berner, J., Williams, P.D., Lucarini, V.:
  Stochastic climate theory and modeling. Wiley Interdisciplinary Reviews:
  Climate Change  \textbf{6}(1),  63--78 (2015)

\bibitem{gangbo2010differential}
Gangbo, W., Kim, H.K., Pacini, T.: Differential forms on Wasserstein space and
  infinite-dimensional Hamiltonian systems. American Mathematical Soc. (2010)

\bibitem{gao2018global}
Gao, X., G{\"u}rb{\"u}zbalaban, M., Zhu, L.: Global convergence of stochastic
  gradient hamiltonian monte carlo for non-convex stochastic optimization:
  Non-asymptotic performance bounds and momentum-based acceleration. arXiv
  preprint arXiv:1809.04618  (2018)

\bibitem{gay2013selective}
Gay-Balmaz, F., Holm, D.D.: Selective decay by casimir dissipation in inviscid
  fluids. Nonlinearity  \textbf{26}(2), ~495 (2013)

\bibitem{Girolami2011}
Girolami, M., Calderhead, B.: {Riemann manifold Langevin and Hamiltonian Monte
  Carlo methods}. Journal of the Royal Statistical Society Series B:
  Statistical Methodology  \textbf{73}(2),  123--214 (2011)

\bibitem{glatt2014notes}
Glatt-Holtz, N.: Notes on statistically invariant states in stochastically
  driven fluid flows. arXiv preprint arXiv:1410.8622  (2014)

\bibitem{gorham2016measuring}
Gorham, J., Duncan, A.B., Vollmer, S.J., Mackey, L.: Measuring sample quality
  with diffusions. arXiv preprint arXiv:1611.06972  (2016)

\bibitem{gorham2015measuring}
Gorham, J., Mackey, L.: Measuring sample quality with stein's method. In:
  Advances in Neural Information Processing Systems. pp. 226--234 (2015)

\bibitem{graham2019manifold}
Graham, M.M., Thiery, A.H., Beskos, A.: Manifold markov chain monte carlo
  methods for bayesian inference in a wide class of diffusion models. arXiv
  preprint arXiv:1912.02982  (2019)

\bibitem{graham1977covariant}
Graham, R.: Covariant formulation of non-equilibrium statistical
  thermodynamics. Zeitschrift f{\"u}r Physik B Condensed Matter
  \textbf{26}(4),  397--405 (1977)

\bibitem{grmela1986bracket}
Grmela, M.: Bracket formulation of diffusion-convection equations. Physica D:
  Nonlinear Phenomena  \textbf{21}(2-3),  179--212 (1986)

\bibitem{guedira1984geometrie}
Guedira, F., Lichnerowicz, A.: G{\'e}om{\'e}trie des alg{\'e}bres de lie
  locales de kirillov. Journal de math{\'e}matiques pures et appliqu{\'e}es
  \textbf{63}(4),  407--484 (1984)

\bibitem{guha2007metriplectic}
Guha, P.: Metriplectic structure, leibniz dynamics and dissipative systems.
  Journal of Mathematical Analysis and Applications  \textbf{326}(1),  121--136
  (2007)

\bibitem{hairer2006geometric}
Hairer, E., Lubich, C., Wanner, G.: Geometric numerical integration:
  structure-preserving algorithms for ordinary differential equations, vol.~31.
  Springer Science \& Business Media (2006)

\bibitem{heber2020posterior}
Heber, F., Trst?anov{\'a}, {\v{Z}}., Leimkuhler, B.: Posterior sampling
  strategies based on discretized stochastic differential equations for machine
  learning applications. Journal of Machine Learning Research
  \textbf{21}(228),  1--33 (2020)

\bibitem{holbrook2018note}
Holbrook, A.: Note on the geodesic {M}onte {C}arlo. arXiv preprint
  arXiv:1805.05289  (2018)

\bibitem{Holbrook:2017}
Holbrook, A., Lan, S., Vandenberg-Rodes, A., Shahbaba, B.: {Geodesic Lagrangian
  Monte Carlo over the space of positive definite matrices: with application to
  Bayesian spectral density estimation}. Journal of Statistical Computation and
  Simulation  (2017). \doi{10.1080/00949655.2017.1416470}

\bibitem{holbrook2016bayesian}
Holbrook, A., Vandenberg-Rodes, A., Shahbaba, B.: Bayesian inference on matrix
  manifolds for linear dimensionality reduction. arXiv preprint
  arXiv:1606.04478  (2016)

\bibitem{holm1998euler}
Holm, D.D., Marsden, J.E., Ratiu, T.S.: The euler--poincar{\'e} equations and
  semidirect products with applications to continuum theories. Advances in
  Mathematics  \textbf{137}(1),  1--81 (1998)

\bibitem{hormander1967hypoelliptic}
H{\"o}rmander, L., et~al.: Hypoelliptic second order differential equations.
  Acta Mathematica  \textbf{119},  147--171 (1967)

\bibitem{horowitz1991generalized}
Horowitz, A.M.: A generalized guided monte carlo algorithm. Physics Letters B
  \textbf{268}(2),  247--252 (1991)

\bibitem{hsu2008brief}
Hsu, E.P.: A brief introduction to brownian motion on a riemannian manifold.
  lecture notes  (2008)

\bibitem{hsu1988brownian}
Hsu, P.: Brownian motion and {R}iemannian geometry. Contemp. Math  \textbf{73},
   95--104 (1988)

\bibitem{hwang1993accelerating}
Hwang, C.R., Hwang-Ma, S.Y., Sheu, S.J.: Accelerating {G}aussian diffusions.
  The Annals of Applied Probability pp. 897--913 (1993)

\bibitem{hwang2005accelerating}
Hwang, C.R., Hwang-Ma, S.Y., Sheu, S.J., et~al.: Accelerating diffusions. The
  Annals of Applied Probability  \textbf{15}(2),  1433--1444 (2005)

\bibitem{ichihara1974classification}
Ichihara, K., Kunita, H.: A classification of the second order degenerate
  elliptic operators and its probabilistic characterization. Zeitschrift
  f{\"u}r Wahrscheinlichkeitstheorie und Verwandte Gebiete  \textbf{30}(3),
  235--254 (1974)

\bibitem{ikeda2014stochastic}
Ikeda, N., Watanabe, S.: Stochastic differential equations and diffusion
  processes. Elsevier (2014)

\bibitem{ivancevic2010dynamics}
Ivancevic, V.G., Ivancevic, T.T.: Dynamics and control of humanoid robots: A
  geometrical approach. Paladyn, Journal of Behavioral Robotics  \textbf{1}(4),
   204--218 (2010)

\bibitem{izaguirre2004shadow}
Izaguirre, J.A., Hampton, S.S.: Shadow hybrid monte carlo: an efficient
  propagator in phase space of macromolecules. Journal of Computational Physics
   \textbf{200}(2),  581--604 (2004)

\bibitem{kaufman1984dissipative}
Kaufman, A.N.: Dissipative hamiltonian systems: A unifying principle. Physics
  Letters A  \textbf{100}(8),  419--422 (1984)

\bibitem{Kennedy:2012}
Kennedy, A.D., Silva, P.J., Clark, M.A.: {Shadow Hamiltonians, Poisson
  Brackets, and Gauge Theories}. Physical Review  \textbf{D87}(3),  034511
  (2013). \doi{10.1103/PhysRevD.87.034511}

\bibitem{khas1960ergodic}
Khas{'}minskii, R.Z.: Ergodic properties of recurrent diffusion processes and
  stabilization of the solution to the cauchy problem for parabolic equations.
  Theory of Probability \& Its Applications  \textbf{5}(2),  179--196 (1960)

\bibitem{khesin2008poisson}
Khesin, B., Lee, P.: Poisson geometry and first integrals of geostrophic
  equations. Physica D: Nonlinear Phenomena  \textbf{237}(14-17),  2072--2077
  (2008)

\bibitem{kolmogoroff1931analytischen}
Kolmogoroff, A.: {\"U}ber die analytischen {M}ethoden in der
  {W}ahrscheinlichkeitsrechnung. Mathematische Annalen  \textbf{104}(1),
  415--458 (1931)

\bibitem{koszul1985crochet}
Koszul, J.L.: Crochet de schouten-nijenhuis et cohomologie. Ast{\'e}risque
  \textbf{137},  257--271 (1985)

\bibitem{law2015data}
Law, K., Stuart, A., Zygalakis, K.: Data assimilation. Cham, Switzerland:
  Springer  (2015)

\bibitem{law2014proposals}
Law, K.J.: Proposals which speed up function-space {MCMC}. Journal of
  Computational and Applied Mathematics  \textbf{262},  127--138 (2014)

\bibitem{lazaro2007stochastic}
L{\'a}zaro-Cam{\'\i}, J.A., Ortega, J.P.: Stochastic hamiltonian dynamical
  systems. arXiv preprint math/0702787  (2007)

\bibitem{le2020diffusion}
Le, H., Lewis, A., Bharath, K., Fallaize, C.: A diffusion approach to stein's
  method on riemannian manifolds. arXiv preprint arXiv:2003.11497  (2020)

\bibitem{lee2013smooth}
Lee, J.M.: Smooth manifolds. In: Introduction to Smooth Manifolds, pp. 1--31.
  Springer (2013)

\bibitem{Leimkuhler:1996}
Leimkuhler, B., Patrick, G.W.: {A symplectic integrator for Riemannian
  Manifolds}. Journal of Nonlinear Science  (1996).
  \doi{https://doi.org/10.1007/BF02433475}

\bibitem{Leimkuhler2004}
Leimkuhler, B., Reich, S.: {Simulating Hamiltonian Dynamics}. Cambridge
  University Press, cambirdge edn. (2004)

\bibitem{leimkuhler2016efficient}
Leimkuhler, B., Matthews, C.: Efficient molecular dynamics using geodesic
  integration and solvent--solute splitting. Proceedings of the Royal Society
  A: Mathematical, Physical and Engineering Sciences  \textbf{472}(2189),
  20160138 (2016)

\bibitem{leimkuhler2016computation}
Leimkuhler, B., Matthews, C., Stoltz, G.: The computation of averages from
  equilibrium and nonequilibrium langevin molecular dynamics. IMA Journal of
  Numerical Analysis  \textbf{36}(1),  13--79 (2016)

\bibitem{leimkuhler2020constraint}
Leimkuhler, B., Pouchon, T., Vlaar, T., Storkey, A.: Constraint-based
  regularization of neural networks. arXiv preprint arXiv:2006.10114  (2020)

\bibitem{lelievre2013optimal}
Leli{\`e}vre, T., Nier, F., Pavliotis, G.A.: Optimal non-reversible linear
  drift for the convergence to equilibrium of a diffusion. Journal of
  Statistical Physics  \textbf{152}(2),  237--274 (2013)

\bibitem{lelievre2018hybrid}
Leli{\`e}vre, T., Rousset, M., Stoltz, G.: Hybrid monte carlo methods for
  sampling probability measures on submanifolds. arXiv preprint
  arXiv:1807.02356  (2018)

\bibitem{lelievre2019hybrid}
Leli{\`e}vre, T., Rousset, M., Stoltz, G.: Hybrid monte carlo methods for
  sampling probability measures on submanifolds. Numerische Mathematik
  \textbf{143}(2),  379--421 (2019)

\bibitem{lelievre2016partial}
Lelievre, T., Stoltz, G.: Partial differential equations and stochastic methods
  in molecular dynamics. Acta Numerica  \textbf{25},  681--880 (2016)

\bibitem{li2016high}
Li, C., Chen, C., Fan, K., Carin, L.: High-order stochastic gradient
  thermostats for bayesian learning of deep models. In: Thirtieth AAAI
  Conference on Artificial Intelligence (2016)

\bibitem{liu2018riemannian}
Liu, C., Zhu, J.: Riemannian stein variational gradient descent for bayesian
  inference. In: Thirty-second aaai conference on artificial intelligence
  (2018)

\bibitem{liu2016stochastic}
Liu, C., Zhu, J., Song, Y.: Stochastic gradient geodesic mcmc methods. In:
  Advances in neural information processing systems. pp. 3009--3017 (2016)

\bibitem{Liu2016testing}
Liu, Q., Lee, J., Jordan, M.: {A kernelized Stein discrepancy for
  goodness-of-fit tests}. In: Proceedings of the International Conference on
  Machine Learning. pp. 276--284 (2016)

\bibitem{Livingstone2014}
Livingstone, S., Girolami, M.: {Information-Geometric Markov Chain Monte Carlo
  Methods Using Diffusions}. Entropy  \textbf{16}(6),  3074--3102 (2014)

\bibitem{livingstone2016geometric}
Livingstone, S., Betancourt, M., Byrne, S., Girolami, M.: On the geometric
  ergodicity of {H}amiltonian {M}onte {C}arlo. arXiv preprint arXiv:1601.08057
  (2016)

\bibitem{lott2006some}
Lott, J.: Some geometric calculations on wasserstein space. arXiv preprint
  math/0612562  (2006)

\bibitem{ma2019there}
Ma, Y.A., Chatterji, N., Cheng, X., Flammarion, N., Bartlett, P., Jordan, M.I.:
  Is there an analog of nesterov acceleration for mcmc? arXiv preprint
  arXiv:1902.00996  (2019)

\bibitem{ma2015complete}
Ma, Y.A., Chen, T., Fox, E.: A complete recipe for stochastic gradient mcmc.
  In: Advances in Neural Information Processing Systems. pp. 2917--2925 (2015)

\bibitem{majda2010linear}
Majda, A., Wang, X., et~al.: Linear response theory for statistical ensembles
  in complex systems with time-periodic forcing. Communications in Mathematical
  Sciences  \textbf{8}(1),  145--172 (2010)

\bibitem{majda2008applied}
Majda, A.J., Franzke, C., Khouider, B.: An applied mathematics perspective on
  stochastic modelling for climate. Philosophical Transactions of the Royal
  Society A: Mathematical, Physical and Engineering Sciences
  \textbf{366}(1875),  2427--2453 (2008)

\bibitem{majda2001mathematical}
Majda, A.J., Timofeyev, I., Vanden~Eijnden, E.: A mathematical framework for
  stochastic climate models. Communications on Pure and Applied Mathematics: A
  Journal Issued by the Courant Institute of Mathematical Sciences
  \textbf{54}(8),  891--974 (2001)

\bibitem{malliavin1978geometrie}
Malliavin, P., Dehen, D., Michel, D.: G{\'e}om{\'e}trie diff{\'e}rentielle
  stochastique, vol.~64. Montreal, Presses de l'universite de Montreal (1978)

\bibitem{Mardia:1999}
Mardia, K.V., Jupp, P.E.: {Directional Statistics}. Wiley Series in Probability
  and Statistics (1999)

\bibitem{Marle:1997}
Marle, C.M.: The schouten-nijenhuis bracket and interior products. Journal of
  Geometry and Physics  \textbf{23}(3),  350 -- 359 (1997).
  \doi{https://doi.org/10.1016/S0393-0440(97)80009-5},
  \url{http://www.sciencedirect.com/science/article/pii/S0393044097800095}

\bibitem{masoliver1987geometrical}
Masoliver, J., Garrido, L., Llosa, J.: Geometrical derivation of the intrinsic
  fokker-planck equation and its stationary distribution. Journal of
  statistical physics  \textbf{46}(1-2),  233--248 (1987)

\bibitem{materassi2016entropy}
Materassi, M.: Entropy as a metric generator of dissipation in complete
  metriplectic systems. Entropy  \textbf{18}(8), ~304 (2016)

\bibitem{mclachlan2002splitting}
McLachlan, R.I., Quispel, G.R.W.: Splitting methods. Acta Numerica
  \textbf{11},  341--434 (2002)

\bibitem{modin2010geodesics}
Modin, K., Perlmutter, M., Marsland, S., McLachlan, R.: Geodesics on lie
  groups: Euler equations and totally geodesic subgroup  (2010)

\bibitem{morrison1986paradigm}
Morrison, P.J.: A paradigm for joined hamiltonian and dissipative systems.
  Physica D: Nonlinear Phenomena  \textbf{18}(1-3),  410--419 (1986)

\bibitem{neal2011mcmc}
Neal, R.M., et~al.: Mcmc using hamiltonian dynamics. Handbook of markov chain
  monte carlo  \textbf{2}(11), ~2 (2011)

\bibitem{nelson1958adjoint}
Nelson, E., et~al.: The adjoint markoff process. Duke Mathematical Journal
  \textbf{25}(4),  671--690 (1958)

\bibitem{nielsen2020elementary}
Nielsen, F.: An elementary introduction to information geometry. Entropy
  \textbf{22}(10), ~1100 (2020)

\bibitem{norris1992complete}
Norris, J.R.: A complete differential formalism for stochastic calculus in
  manifolds. S{\'e}minaire de probabilit{\'e}s de Strasbourg  \textbf{26},
  189--209 (1992)

\bibitem{osada1987diffusion}
Osada, H., et~al.: Diffusion processes with generators of generalized
  divergence form. Journal of Mathematics of Kyoto University  \textbf{27}(4),
  597--619 (1987)

\bibitem{ottobre2016markov}
Ottobre, M.: Markov chain monte carlo and irreversibility. Reports on
  Mathematical Physics  \textbf{77}(3),  267--292 (2016)

\bibitem{ottobre2016function}
Ottobre, M., Pillai, N.S., Pinski, F.J., Stuart, A.M., et~al.: A function space
  hmc algorithm with second order langevin diffusion limit. Bernoulli
  \textbf{22}(1),  60--106 (2016)

\bibitem{pavliotis2014stochastic}
Pavliotis, G.A.: Stochastic processes and applications: diffusion processes,
  the Fokker-Planck and Langevin equations, vol.~60. Springer (2014)

\bibitem{planck1917satz}
Planck, V.: {\"U}ber einen {S}atz der statistischen {D}ynamik und seine
  {E}rweiterung in der {Q}uantentheorie. Sitzungberichte der  (1917)

\bibitem{radivojevic2019modified}
Radivojevi{\'c}, T., Akhmatskaya, E.: Modified hamiltonian monte carlo for
  bayesian inference. Statistics and Computing pp. 1--28 (2019)

\bibitem{raginsky2017non}
Raginsky, M., Rakhlin, A., Telgarsky, M.: Non-convex learning via stochastic
  gradient langevin dynamics: a nonasymptotic analysis. arXiv preprint
  arXiv:1702.03849  (2017)

\bibitem{rapaport2004art}
Rapaport, D.C., Rapaport, D.C.R.: The art of molecular dynamics simulation.
  Cambridge university press (2004)

\bibitem{rey2015irreversible}
Rey-Bellet, L., Spiliopoulos, K.: Irreversible langevin samplers and variance
  reduction: a large deviations approach. Nonlinearity  \textbf{28}(7), ~2081
  (2015)

\bibitem{risken1996fokker}
Risken, H.: Fokker-{P}lanck equation. In: The Fokker-Planck Equation, pp.
  63--95. Springer (1996)

\bibitem{roberts1996exponential}
Roberts, G.O., Tweedie, R.L., et~al.: Exponential convergence of langevin
  distributions and their discrete approximations. Bernoulli  \textbf{2}(4),
  341--363 (1996)

\bibitem{rohm1986antisymmetric}
Rohm, R., Witten, E.: The antisymmetric tensor field in superstring theory.
  Annals of Physics  \textbf{170}(2),  454--489 (1986)

\bibitem{schwartz1982geometrie}
Schwartz, L.: Geometrie differentielle du 2 {\`e}me ordre, semi-martingales et
  equations differentielles stochastiques sur une variete differentielle. In:
  S{\'e}minaire de Probabilit{\'e}s XVI, 1980/81 Suppl{\'e}ment:
  G{\'e}om{\'e}trie Diff{\'e}rentielle Stochastique, pp. 1--148. Springer
  (1982)

\bibitem{shahbaba2019deep}
Shahbaba, B., Lomeli, L.M., Chen, T., Lan, S.: Deep markov chain monte carlo.
  arXiv preprint arXiv:1910.05692  (2019)

\bibitem{shi2012relation}
Shi, J., Chen, T., Yuan, R., Yuan, B., Ao, P.: Relation of a new interpretation
  of stochastic differential equations to ito process. Journal of Statistical
  physics  \textbf{148}(3),  579--590 (2012)

\bibitem{sohl2014hamiltonian}
Sohl-Dickstein, J., Mudigonda, M., DeWeese, M.R.: Hamiltonian monte carlo
  without detailed balance. arXiv preprint arXiv:1409.5191  (2014)

\bibitem{stoltz2010free}
Stoltz, G., Rousset, M., et~al.: Free energy computations: A mathematical
  perspective. World Scientific (2010)

\bibitem{sweet2009separable}
Sweet, C.R., Hampton, S.S., Skeel, R.D., Izaguirre, J.A.: A separable shadow
  hamiltonian hybrid monte carlo method. The Journal of chemical physics
  \textbf{131}(17),  174106 (2009)

\bibitem{takaishi2006testing}
Takaishi, T., De~Forcrand, P.: Testing and tuning symplectic integrators for
  the hybrid monte carlo algorithm in lattice qcd. Physical Review E
  \textbf{73}(3),  036706 (2006)

\bibitem{tuckerman2010statistical}
Tuckerman, M.: Statistical mechanics: theory and molecular simulation. Oxford
  university press (2010)

\bibitem{volterra1889generalisation}
Volterra, V., et~al.: Sur une gen{\'e}ralisation de la th{\'e}orie des
  fonctions d'une variable imaginaire: {I}er {M}{\'e}moire. Acta Mathematica
  \textbf{12},  233--286 (1889)

\bibitem{weinstein1983hamiltonian}
Weinstein, A.: Hamiltonian structure for drift waves and geostrophic flow. The
  Physics of Fluids  \textbf{26}(2),  388--390 (1983)

\bibitem{weinstein1997modular}
Weinstein, A.: The modular automorphism group of a poisson manifold. Journal of
  Geometry and Physics  \textbf{23}(3-4),  379--394 (1997)

\bibitem{welling2011bayesian}
Welling, M., Teh, Y.W.: Bayesian learning via stochastic gradient langevin
  dynamics. In: Proceedings of the 28th international conference on machine
  learning (ICML-11). pp. 681--688 (2011)

\bibitem{xifara2014langevin}
Xifara, T., Sherlock, C., Livingstone, S., Byrne, S., Girolami, M.: Langevin
  diffusions and the metropolis-adjusted langevin algorithm. Statistics \&
  Probability Letters  \textbf{91},  14--19 (2014)

\bibitem{xu2018global}
Xu, P., Chen, J., Zou, D., Gu, Q.: Global convergence of langevin dynamics
  based algorithms for nonconvex optimization. In: Advances in Neural
  Information Processing Systems. pp. 3122--3133 (2018)

\bibitem{yin2006existence}
Yin, L., Ao, P.: Existence and construction of dynamical potential in
  nonequilibrium processes without detailed balance. Journal of Physics A:
  Mathematical and General  \textbf{39}(27), ~8593 (2006)

\bibitem{zappa2018monte}
Zappa, E., Holmes-Cerfon, M., Goodman, J.: Monte carlo on manifolds: sampling
  densities and integrating functions. Communications on Pure and Applied
  Mathematics  \textbf{71}(12),  2609--2647 (2018)

\end{thebibliography}

\end{document}